\theoremstyle{plain}
\newtheorem{theorem}{Theorem}[section]
\newtheorem{lemma}[theorem]{Lemma}
\newtheorem{proposition}[theorem]{Proposition}
\newtheorem{claim}[theorem]{Claim}
\newtheorem{definition}[theorem]{Definition}
\newtheorem{fact}[theorem]{Fact}
\numberwithin{equation}{section} 
\newcommand{\etal}{\textit{et~al. }}
\def\SD{\ensuremath{\mathsf{SD}}}
\def\leftson{\ensuremath{\mathsf{leftSon}} }
\def\rightson{\ensuremath{\mathsf{rightSon}} }
\def\parent{\ensuremath{\mathsf{parent}} }
\def\root{\ensuremath{\mathsf{root}} }
\def\dep{\ensuremath{\mathsf{depth}} }
\def\xf{\ensuremath{X_{\mathsf{F}} }}
\def\yf{\ensuremath{Y_{\mathsf{F}} }}
\def\xfi{\ensuremath{X_{\mathsf{FI}} }}
\def\yfi{\ensuremath{Y_{\mathsf{FI}} }}
\def\kin{\ensuremath{k_{\text{in}}} }
\def\kout{\ensuremath{k_{\text{out}}} }
\def\crm{\ensuremath{\mathsf{Resp}} }
\def\dreamresp{\ensuremath{\mathsf{DreamResp}} }
\def\disp{\ensuremath{\mathsf{Disp}} }
\def\subext{\ensuremath{\mathsf{SubExt}} }
\def\ibot{\ensuremath{i_\text{bot}} }
\def\imid{\ensuremath{i_\text{mid}} }
\def\itop{\ensuremath{i_\text{top}} }
\def\vvtop{\ensuremath{v_\text{top}} }
\def\vvmid{\ensuremath{v_\text{mid}} }
\def\obsvvmid{\ensuremath{v^{\text{observed}}_\text{mid}} }
\def\vvbot{\ensuremath{v_\text{bot}} }
\def\uutop{\ensuremath{u_\text{top}} }
\def\uumid{\ensuremath{u_\text{mid}} }
\def\uubot{\ensuremath{u_\text{bot}} }
\def\pobs{\ensuremath{p_\text{observed}} }
\def\qobs{\ensuremath{q_\text{observed}} }
\def\lefty{\ensuremath{\mathsf{left}} }
\def\righty{\ensuremath{\mathsf{right}} }
\def\ch{\ensuremath{\mathsf{Challenge}} }
\def\resp{\ensuremath{\mathsf{Response}} }
\def\newch{\ensuremath{\mathsf{NodePathCh}} }
\def\f{\ensuremath{\mathsf{F}} }
\def\bs{\ensuremath{\mathsf{B}} }
\def\btop{\ensuremath{\mathsf{B}_\text{top}} }
\def\bmid{\ensuremath{\mathsf{B}_\text{mid}} }
\def\bbot{\ensuremath{\mathsf{B}_\text{bot}} }
\def\h{\ensuremath{\mathsf{H}} }
\def\lbl{\ensuremath{\mathsf{label}} }
\def\er{\ensuremath{2^{-k^{\Omega(1)}}} }
\def\se{\ensuremath{\mathsf{SE}} }
\def\bext{\ensuremath{\mathsf{BExt}} }
\def\fixed{\ensuremath{\mathsf{fixed}} }
\def\hasent{\ensuremath{\mathsf{hasEntropy}} }
\def\sup{\ensuremath{\mathsf{supp}} }
\def\poly{{\mathrm{poly}}}
\def\polylog{{\mathrm{polylog}}}
\newcommand{\eps}{\varepsilon}
\newcommand{\ext}{\mathsf{Ext}}
\newcommand{\me}{H_\infty}
\renewcommand{\Pr}{\mathop{\bf Pr\/}}
\def\pr{\Pr}
\def\poly{{\mathrm{poly}}}
\def\polylog{{\mathrm{polylog}}}
\def\({\left(}
\def\){\right)}
\begin{document}

\title{Two-Source Dispersers for Polylogarithmic Entropy and Improved Ramsey Graphs}
\author{
  Gil Cohen\thanks{
    Department of Computer Science and Applied Mathematics,
    Weizmann Institute of Science, Rehovot, Israel.
    Email: {\tt coheng@gmail.com}.
    Partially supported by an ISF grant and
    by the I-CORE program of the planning and budgeting committee.
}
}

\maketitle

\begin{abstract}
In his 1947 paper that inaugurated the probabilistic method, Erd{\"o}s~\cite{Erdos47} proved the existence of $2\log{n}$-Ramsey graphs on $n$ vertices. Matching Erd{\"o}s' result with a constructive proof is a central problem in combinatorics, that has gained a significant attention in the literature. The state of the art result was obtained in the celebrated paper by Barak, Rao, Shaltiel and Wigderson [Ann. Math'12], who constructed a $2^{2^{(\log\log{n})^{1-\alpha}}}$-Ramsey graph, for some small universal constant $\alpha > 0$.

In this work, we significantly improve the result of Barak~\etal and construct $2^{(\log\log{n})^c}$-Ramsey graphs, for some universal constant $c$. In the language of theoretical computer science, our work resolves the problem of explicitly constructing two-source dispersers for polylogarithmic entropy.
\end{abstract}

\thispagestyle{empty}
\newpage

\small
\tableofcontents
\normalsize
\thispagestyle{empty}
\newpage
\pagenumbering{arabic}

\section{Introduction}

Ramsey theory is a branch of combinatorics that studies the unavoidable presence of local structure in globally-unstructured objects. In the paper that pioneered this field of study, Ramsey~\cite{Ramsey1928} considered an instantiation of this phenomena in graph theory.

\begin{definition}[Ramsey graphs]
A graph on $n$ vertices is called $k$-Ramsey if it contains no clique or independent set of size $k$. \end{definition}

Ramsey showed that there does not exist a graph on $n$ vertices that is $\log(n)/2$-Ramsey. In his influential paper that inaugurated the probabilistic method, Erd{\"o}s~\cite{Erdos47} complemented Ramsey's result and showed that most graphs on $n$ vertices are $2\log{n}$-Ramsey. Unfortunately, Erd{\"o}s' argument is non-constructive and one does not obtain from Erd{\"o}s' proof an example of a graph that is $2\log{n}$-Ramsey. In fact, Erd{\"o}s offered a \$100 dollar prize for matching his result, up to any multiplicative constant factor, by a constructive proof. That is, coming up with an explicit construction of an $O(\log{n})$-Ramsey graph. Erd{\"o}s' challenge gained a significant attention in the literature as summarized in Table~\ref{tab:summary}.

Explicitness got a new meaning in the computational era. While, classically, a succinct mathematical formula was considered to be an explicit description, complexity theory suggests a more relaxed, and arguably more natural interpretation of explicitness. An object is deemed explicit if one can efficiently construct that object from scratch. More specifically, a graph on $n$ vertices is explicit if given the labels of any two vertices $u,v$, one can efficiently determine whether there is an edge connecting $u,v$ in the graph. Since the description length of $u,v$ is $2\log{n}$ bits, quantitatively, by efficient we require that the running-time is $\polylog(n)$.

Ramsey graphs have an analogous definition for bipartite graphs. A bipartite graph on two sets of $n$ vertices is a bipartite $k$-Ramsey if it has no $k \times k$ complete or empty bipartite subgraph. One can show that a bipartite Ramsey graph induces a Ramsey graph with comparable parameters. Thus, constructing bipartite Ramsey-graphs is at least as hard as constructing Ramsey graphs, and it is believed to be a strictly harder problem. Furthermore, Erd{\"o}s' argument holds as is for bipartite graphs.

The main result of this paper is an explicit construction of bipartite Ramsey graphs that significantly improves previous results.

\begin{table}[h]\centering
\renewcommand{\arraystretch}{1.5}
\begin{tabular}{ | m{7cm} | m{4cm} | m{1.8cm} | }
\hline Construction & $k(n)$    & Bipartite \\ \hline
\specialrule{0.15em}{.025em}{.025em}
  \cite{Erdos47} (non-constructive) & $2 \log{n}$        & \quad\,\,\, $\checkmark$      \\ \hline
  \cite{abbott72}  & $n^{\log{2}/\log{5}}$        & \quad\,\,\,       \\ \hline
  \cite{nagy1975}  & $n^{1/3}$        & \quad\,\,\,       \\ \hline
  \cite{frankl1977}  & $n^{o(1)}$        & \quad\,\,\,       \\ \hline
  \cite{chung1981}  & $2^{O((\log{n})^{3/4} \cdot (\log\log{n})^{1/4})}$        & \quad\,\,\,       \\ \hline
  \cite{FW81,naor1992constructing,Alon1998shannon,grolmusz2001low,barak06ramsey}  & $2^{O(\sqrt{\log{n} \cdot \log\log{n}})}$        & \quad\,\,\,       \\ \hline

  The Hadamard matrix (folklore)                   & $n/2$             & \quad\,\,\, $\checkmark$      \\ \hline
  \cite{PR04}                   & $n/2 - \sqrt{n}$             & \quad\,\,\, $\checkmark$      \\ \hline
  \cite{BKSSW10}                    & $o(n)$             & \quad\,\,\, $\checkmark$      \\ \hline
  \cite{brsw06}                     & $2^{2^{(\log\log{n})^{1-\alpha}}} = n^{o(1)}$ &\quad\,\,\,  $\checkmark$ \\ \hline
\specialrule{0.1em}{.015em}{.015em}
  This work                         & $2^{(\log\log{n})^{O(1)}}$      & \quad\,\,\, $\checkmark$ \\ \hline
\end{tabular}
\caption{Summary of constructions of Ramsey graphs.}
\label{tab:summary}
\end{table}

\begin{theorem}[Ramsey graphs]\label{thm:main intro}
There exists an explicit bipartite $2^{(\log\log{n})^{O(1)}}$-Ramsey graph on $n$ vertices.
\end{theorem}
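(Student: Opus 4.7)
The plan is to reduce the theorem to the construction of an explicit two-source disperser. A function $\disp:\{0,1\}^N \times \{0,1\}^N \to \{0,1\}$ that outputs both $0$ and $1$ with positive probability on every pair of independent $(N,k)$-sources $(X,Y)$ immediately yields an explicit bipartite $2^k$-Ramsey graph on $2^N \times 2^N$ vertices via the edge set $\{(x,y) : \disp(x,y) = 1\}$, and explicitness of the graph is equivalent to $\polylog$-time evaluability of $\disp$. Setting $N = \log n$, it therefore suffices to construct such a $\disp$ at min-entropy threshold $k = (\log N)^{O(1)} = (\log \log n)^{O(1)}$.

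To construct $\disp$, I would build on the challenge-response skeleton of Barak, Rao, Shaltiel and Wigderson. Given adversarial sources $(X,Y)$ of min-entropy $k$, one recursively descends a binary tree whose internal nodes are labeled with carefully computed challenges derived from one source; at each node a response extracted from the other source either certifies the challenge as ``fake'' and directs the walker left, or ``real'' and directs it right. At each leaf, an auxiliary somewhere-random extractor such as \laext or \srext produces a candidate output bit, and correctness of $\disp$ reduces to exhibiting a good root-to-leaf path along which both sources retain enough min-entropy that the leaf-extractor succeeds and that different leaves can be driven to different bits.

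The main obstacle, and precisely what caps BRSW at $k = N^{o(1)}$, is that each level of their recursion costs a constant fraction of the available entropy, so only $O(\log \log N)$ levels are affordable before the budget vanishes. To push $k$ down to $\polylog(N)$ one needs $\Theta(\log N)$ levels, which in turn requires that every component used in the recursion have per-level entropy loss that is only additive poly-logarithmic rather than a constant fraction. I expect the heart of the construction to be therefore threefold: a lookahead-style extractor \laext that preserves essentially all the min-entropy of its weak source while using a very short seed, a somewhere-random affine extractor \sraext robust enough to handle the richer class of adversarial structures arising in deep recursion, and an \findpath procedure that deterministically identifies, among the exponentially many root-to-leaf paths, one on which both $X$ and $Y$ remain sufficiently entropic after every fixing along the path.

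With those primitives in hand, the plan is to formalize the recursion as follows: at each level maintain an invariant that both sources still have min-entropy at least $k_i$, show that the challenge-response test \crm together with \dreamresp correctly classifies nodes into ``has entropy'' versus ``fixed'' with error $\er$, argue that \findpath deterministically locates a path along which the invariant is maintained with $k_{i+1} \geq k_i - \polylog(k_i)$, and finally invoke \laext at the leaf to output a bit. A symmetry/asymmetry argument between the two sources ensures the output is not constant. Telescoping the per-level loss across $\Theta(\log N)$ levels and starting from $k_0 = (\log N)^{O(1)}$ keeps $k_i$ positive throughout, completing the construction.
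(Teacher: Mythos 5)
Your reduction in the first paragraph is correct and matches the paper's reduction from Theorem~\ref{thm:main intro} to a two-source disperser (the paper actually goes through the stronger notion of a two-source sub-extractor, Theorem~\ref{thm:subext main}, but the disperser suffices for the Ramsey statement). The gap is in the construction of the disperser itself, where the proposal departs from what actually works and omits the key enabling ingredient.

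First, the diagnosis of the BRSW bottleneck is off. Both BRSW and this paper descend the \emph{full} depth-$\log N$ tree; the obstacle is not that only $O(\log\log N)$ levels are affordable. As the paper explains in Section~\ref{sec:overview brsw}, the bottleneck in BRSW is the strategy for \emph{determining the output bit} once the entropy-path has been identified: their delicate parameter-tuning to make the top node's decision non-constant is what caps the entropy at $2^{(\log N)^{1-\alpha}}$ and limits the output to $O(\log\log N)$ bits. Your plan replaces this hardest step with a one-line appeal to a ``symmetry/asymmetry argument,'' which is exactly where a proof would need to do the real work. Second, the primitives you invoke are not the right ones. $\dreamresp$ is explicitly a fantasy that ``will remain a dream'' (Section~\ref{sec:overview cr mech}); it is not a usable subroutine. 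The leaf extractors $\laext$ and $\sraext$ play no role in this paper. What actually makes polylogarithmic entropy reachable is Li's recent block-source-versus-weak-source extractor $\bext$ (Theorem~\ref{thm:li extractor}), which extracts from a $\polylog(n)$-block-source against an independent $\polylog(n)$-weak-source with exponentially small error; nothing available to BRSW had this feature, and no amount of reorganizing the recursion gets around needing it. Third, the actual construction does not locate a good root-to-leaf path and then run a leaf extractor; it assumes a ``triple-block-source'' tree structure on \emph{both} sources (obtainable on a low-deficiency subsource via the three-types lemma, Lemma~\ref{lemma:three cases lemma}), identifies the entropy-paths of both trees simultaneously via the challenge-response mechanism, then uses a new \emph{node-path challenge} to pin down the internal node $\vvmid(T_X)$ whose associated block is a block-source in the relevant subsource, and finally outputs $\bext(x_{\obsvvmid(x,y)}\circ x, y)$. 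The essential new idea is that one can afford to \emph{fix} the left block of $X_{\vvmid(T_X)}$ (and of $Y_{\uumid(T_Y)}$) in order to find $\vvmid$, and yet still have a block-source to feed to $\bext$ because $\vvmid$ sits below the left son of $\vvtop(T_X)$, whose block-structure survives the fixing. None of this appears in the proposal, so as written it does not constitute a proof.
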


In fact, the graph that we construct has a stronger property. Namely, for $k = 2^{(\log\log{n})^{O(1)}}$, any $k$ by $k$ bipartite subgraph has a relatively large subgraph of its own that has density close to $ 1/2$.

\subsection{Two-source dispersers, extractors, and sub-extractors}

In the language of theoretical computer science, Theorem~\ref{thm:main intro} yields a two-source disperser for polylogarithmic entropy. We first recall some basic definitions.

\begin{definition}[Statistical distance]
The \emph{statistical distance} between two distributions $X,Y$ on a common domain $D$ is defined by
$$
\SD\left(X,Y\right) = \max_{A \subseteq D}\left\{\left|\, \Pr[X \in A] - \Pr[Y \in A]\,\right| \right\}.
$$
If $\SD(X,Y) \le \eps$ we say that $X$ is $\eps$-close to $Y$.
\end{definition}

\begin{definition}[Min-entropy]
The \emph{min-entropy} of a random variable $X$ is defined by
$$
\me(X) = \min_{x \in \sup(X)}{\log_2\left(\frac1{\Pr[X=x]}\right)}.
$$
If $X$ is supported on $\{0,1\}^n$, we define the \emph{min-entropy rate} of $X$ by $\me(X)/n$. In such case, if $X$ has min-entropy $k$ or more, we say that $X$ is an $(n,k)$-weak-source or simply an $(n,k)$-source.
\end{definition}

\begin{definition}[Two-source zero-error dispersers]
A function $\disp \colon \{0,1\}^n \times \{0,1\}^n \to \{0,1\}^m$  is called a \emph{two-source zero-error disperser} for entropy $k$ if for any two independent $(n,k)$-sources $X,Y$, it holds that $$
\sup(\disp(X,Y)) = \{0,1\}^m.
$$
\end{definition}

Note that a two-source zero-error disperser for entropy $k$, with a single output bit, is equivalent to a bipartite $2^k$-Ramsey graph on $2^n$ vertices on each side. Constructing two-source dispersers for polylogarithmic entropy is considered a central problem in pseudorandomness, that we resolve in this paper. Indeed, a $2^{\poly(\log\log{n})}$-Ramsey graph on $n$ vertices is equivalent to a disperser for entropy $\polylog(n)$. From the point of view of dispersers, it is easier to see how challenging is Erd{\"o}s' goal of constructing $O(\log{n})$-Ramsey graphs. Indeed, these are equivalent to dispersers for entropy $\log(n)+O(1)$. Even a disperser for entropy $O(\log{n})$ does not meet Erd{\"o}s' goal as it translates to a $\polylog(n)$-Ramsey graph.

While Theorem~\ref{thm:main intro} already yields a two-source zero-error disperser for polylogarithmic entropy, we can say something stronger.

\begin{theorem}[Two-source zero-error dispersers]\label{thm:main intro dispersers}
There exists an explicit two-source zero-error disperser for $n$-bit sources having entropy $k = \polylog(n)$, with $m = k^{\Omega(1)}$ output bits.
\end{theorem}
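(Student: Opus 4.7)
The plan is to deduce Theorem~\ref{thm:main intro dispersers} from the same construction that yields Theorem~\ref{thm:main intro}, by observing that the construction is not merely a disperser but a two-source \emph{sub-extractor} in the sense of \cite{brsw06}. A sub-extractor is a function $\subext \colon \{0,1\}^n \times \{0,1\}^n \to \{0,1\}^m$ such that for every pair of independent $(n,k)$-sources $X,Y$ there exist subsources $X' \preceq X$ and $Y' \preceq Y$, each of min-entropy at least $k^{\Omega(1)}$, for which $\subext(X',Y')$ is $\eps$-close to the uniform distribution $U_m$ with error $\eps = \er$. The case $m=1$ is precisely the strengthened Ramsey property that follows Theorem~\ref{thm:main intro} in the introduction (``any $k \times k$ bipartite subgraph has a relatively large subgraph of density close to $1/2$''). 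Thus the task is only to tune the output length so that the same object doubles as a zero-error disperser with $m = k^{\Omega(1)}$ output bits.

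First, I would take the sub-extractor to natively produce $m_0 = k^{c_0}$ output bits with error $\eps = 2^{-k^{c_1}}$, for absolute constants $c_0,c_1 > 0$ supplied by the construction. Set $m = \min\{m_0,\, \lfloor k^{c_1}/2\rfloor\}$ and truncate the output to its first $m$ bits; truncation cannot increase statistical distance, so $\subext(X',Y')$ remains $\eps$-close to $U_m$, while by choice of $m$ we have $\eps < 2^{-m}$. Then the zero-error disperser property is immediate: fix independent $(n,k)$-sources $X,Y$ and any target $z \in \{0,1\}^m$, and let $X' \preceq X$, $Y' \preceq Y$ be the subsources guaranteed by the sub-extractor property. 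Applying the statistical-distance bound to the singleton event $A=\{z\}$ gives
\[
\Pr\bigl[\subext(X',Y') = z\bigr] \;\geq\; 2^{-m} - \eps \;>\; 0,
\]
so $z \in \sup(\subext(X',Y')) \seq \sup(\subext(X,Y))$. Since $z$ was arbitrary, $\sup(\subext(X,Y)) = \{0,1\}^m$, which is exactly the claim.

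The only real work, and the sole obstacle, is the construction of the sub-extractor itself with output length $k^{\Omega(1)}$ and error $\er$ — precisely the content of Theorem~\ref{thm:main intro}. Once that object is in hand, the upgrade to a many-bit zero-error disperser is the routine truncation argument above. I would therefore present Theorem~\ref{thm:main intro dispersers} as an essentially free corollary, with all the technical effort concentrated in producing a sub-extractor whose output entropy is polynomial in the source entropy rather than just one bit.
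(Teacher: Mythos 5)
Your proposal is correct and matches the paper's own route: the paper derives Theorem~\ref{thm:main intro dispersers} directly from the sub-extractor of Theorem~\ref{thm:subext main} by exactly this truncation argument, cutting the output to $\min(m,\log(1/\eps))$ bits so that every point in $\{0,1\}^m$ has positive probability under $\subext(X',Y')$ and hence lies in $\sup(\subext(X,Y))$. (One minor attribution slip: the sub-extractor property is what the paper credits to \cite{BKSSW10}, while explicitly noting that the disperser of \cite{brsw06} does \emph{not} appear to be a sub-extractor.)
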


Theorem~\ref{thm:main intro dispersers} gives an explicit zero-error disperser for polylogarithmic entropy, with many output bits. In fact, we prove a stronger statement than that. To present it, we recall the notion of a two-source extractor, introduced by Chor and Goldreich~\cite{CG88}.

\begin{definition}[Two-source extractors]
A function $\ext \colon \{0,1\}^n \times \{0,1\}^n \to \{0,1\}^m$  is called a \emph{two-source extractor} for entropy $k$, with error $\eps$, if for any two independent $(n,k)$-sources $X,Y$, it holds that $\ext(X,Y)$ is $\eps$-close to uniform.
\end{definition}

Chor and Goldreich~\cite{CG88} proved that there exist two-source extractors with error $\eps$ for entropy $k = \log(n) + 2 \log(1/\eps) + O(1)$ with $m = 2k - 2 \log(1/\eps) - O(1)$ output bits. A central open problem in pseudorandomness is to match this existential proof with an explicit construction having comparable parameters. Unfortunately, even after almost $30$ years, little progress has been made.

Already in their paper, Chor and Goldreich gave an explicit construction of a two-source extractor for entropy $0.51n$, which is very far from what is obtained by the existential argument. Nevertheless, it took almost $20$ years before any improvement was made. Bourgain~\cite{Bourgain05} constructed a two-source extractor for entropies $(1/2-\alpha) \cdot n$, where $\alpha > 0$ is some small universal constant. An incomparable result was obtained by Raz~\cite{Raz05}, who required one source to have min-entropy $0.51n$ but the other source can have entropy $O(\log{n})$.


In this paper we construct a pseudorandom object that is stronger than a two-source zero-error disperser, yet is weaker than a two-source extractor. Informally speaking, this is a function with the following property. In any two independent weak-sources, there exist two independent weak-sources with comparable amount of entropy to the original sources, restricted to which, the function acts as a two-source extractor. To give the formal definition we first recall the definition of a subsource, introduced in~\cite{BKSSW10}.

\begin{definition}[Subsource]
Given random variables $X$ and $X'$ on $\{0,1\}^n$, we say that $X'$ is a \emph{deficiency $d$} subsource of $X$ and write $X' \subset X$ if there exists a set $A \subseteq \{0,1\}^n$ such that $(X \mid A) = X'$ and $\pr[X \in A] \ge 2^{-d}$. More precisely, for every $a \in A$, $\pr[X' = a]$ is defined by $\pr[X = a \mid X \in A]$ and for $a \not\in A$, $\pr[X' = a] = 0$.
\end{definition}

It is instructive to think of a weak-source $X$ as a random variable that is uniformly distributed over some set $S(X)$. In this case, $X'$ is a subsource of $X$ is the same as saying that $S(X')$ is a subset of $S(X)$. The deficiency determines the density of $S(X')$ in $S(X)$.

\begin{definition}[Two-source sub-extractors]
A function
$$
\subext \colon \{0,1\}^n \times \{0,1\}^n \to \{0,1\}^m
$$
is called a \emph{two-source sub-extractor} for outer-entropy $\kout$ and inner-entropy $\kin$, with error $\eps$, if the following holds. For any independent $(n,\kout)$-sources $X,Y$, there exist min-entropy $\kin$ subsources $X' \subset X$, $Y' \subset Y$, such that $\subext(X',Y')$ is $\eps$-close to uniform.
\end{definition}

Although we are not aware of the definition of two-source sub-extractors made explicit in previous works, we note that the two-source disperser constructed by Barak~\etal~\cite{BKSSW10} is in fact a two-source sub-extractor. More precisely, for any constant $\delta > 0$, the authors construct a two-source sub-extractor for outer-entropy $\delta n$ and inner-entropy $\poly(\delta)n$. On the other hand, the state of the art two-source disperser by Barak~\etal~\cite{brsw06} does not seem to be a sub-extractor.

The main theorem proved in this paper is the following.

\begin{theorem}[Two-source sub-extractors]\label{thm:subext main}
There exists an explicit two-source sub-extractor for outer-entropy $\kout = \polylog(n)$ and inner-entropy $\kin = \kout^{\Omega(1)}$, with $m = \kout^{\Omega(1)}$ output bits and error $\eps = 2^{-\kout^{\Omega(1)}}$.
\end{theorem}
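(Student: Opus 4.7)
The plan is to prove Theorem~\ref{thm:subext main} by extending the challenge--response framework of~\cite{BKSSW10, brsw06} to polylogarithmic entropy while preserving the sub-extractor property. Given two independent $(n,\kout)$-sources $X,Y$, I would first organize each source into a hierarchy of blocks at several geometric scales, so that each source is viewed as a tree of candidate blocks of depth roughly $O(\log\log n)$. Among these blocks we wish to identify a triple of aligned positions $\itop,\imid,\ibot$ at which the corresponding blocks of $X$ and $Y$ carry fresh entropy conditioned on their ancestors; such a triple embeds an aligned block source into $X$ and $Y$ on which one can run an explicit block-source extractor.

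Locating such a triple is the job of a procedure \findpath built around a response test \crm: given a candidate block $v$ in one source, \crm computes from the other source a short string and checks whether $v$ matches it. Intuitively, a low-entropy block is close to a fixed value and so matches its response, whereas a high-entropy block looks uniform and so fails the match; this lets \findpath walk the block tree and emit a label $(\itop,\imid,\ibot)$ that, with probability close to one over $X,Y$, identifies a genuine embedded block source. Because the label uses only $\polylog(\log n)$ bits, fixing it by conditioning costs only a polylogarithmic entropy deficiency, and the resulting subsources $X' \subset X$, $Y' \subset Y$ are what give the sub-extractor property rather than merely a disperser as in~\cite{brsw06}.

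On $X',Y'$ we have a pair of aligned block sources with inner entropy $\kin = \kout^{\Omega(1)}$ per block. Feeding the identified blocks into an explicit two-source block-source extractor then yields $m = \kout^{\Omega(1)}$ output bits with error $\eps = 2^{-\kout^{\Omega(1)}}$; the output length and error appearing in the statement are inherited from this final extractor.

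The hard part, and where the bulk of the technical work goes, is the design of \crm at polylogarithmic entropy in such a way that the responses it compares against are produced from genuine subsources of $X$ and $Y$, rather than from the ``dream'' distributions \dreamresp used in~\cite{brsw06} --- it is precisely the reliance on dream responses that prevents the construction in~\cite{brsw06} from being a sub-extractor. Concretely, at every recursive step one must show that \crm separates blocks with conditional min-entropy $\kout^{\Omega(1)}$ from blocks without, while the conditioning required to fix the answer of \findpath erodes entropy by only a polylogarithmic amount. Balancing these two competing pressures over the $O(\log\log n)$ levels of recursion, and tuning the block scales so that inner entropy $\kin = \kout^{\Omega(1)}$ survives at the end, is the crux of the argument.
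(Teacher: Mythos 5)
There is a genuine gap: your sketch recovers the outer shell of the construction (tree decomposition, challenge--response to locate entropy, finish with a block-source extractor) but misses the central technical device and gets several quantitative and attributional details wrong in ways that matter.

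First, the parameters are off by an exponential. The tree associated with an $n$-bit source in the paper has depth $\log n$ (each node halves its block), not $O(\log\log n)$, and the bookkeeping costs are correspondingly $\polylog(n)$, not $\polylog(\log n)$: each invocation of the challenge--response mechanism and each fixing via Fact~\ref{fact:def and fixing} costs deficiency on the order of $\ell \log^2 n$ where $\ell$ is the challenge length. Getting these scales right is precisely the entropy accounting you call ``the crux,'' so this cannot be waved away. Second, the attribution is wrong: the paper introduces $\dreamresp$ only as a pedagogical idealization of $\crm$ in Section~\ref{sec:overview cr mech}; the BRSW construction does use the genuine mechanism, and what prevents it from being a sub-extractor is its output strategy (tuning $\crm$ so that $v_{\bs}$'s decision is non-constant, yielding a disperser bit but no subsource on which the output is near-uniform), not any reliance on ``dream responses.''

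Most importantly, you have not addressed the step that the paper actually had to invent. After running the BRSW-style challenge--response walk, one only knows a root-to-leaf \emph{path} $\pobs(x,y)$ that contains the block-source node $\vvmid(T_X)$; one does not know \emph{which} node on that path it is, and naively extracting from the wrong depth fails. The paper's contribution is exactly this localization: require a \emph{triple}-block-source structure, condition on $X_{\leftson(\vvmid(T_X))} = \alpha$ and $Y_{\leftson(\uumid(T_Y))} = \beta$ (chosen by averaging over the already-controlled error), and then use the \emph{node-path challenge} $\newch(x_v, y_{\qobs(x,y)})$ whose rows are $\bext(y_{w_j}, x_v)$ --- note the roles of $x$ and $y$ are swapped relative to the path-finding challenges --- so that Proposition~\ref{prop:heart overview} can be proved: below $\vvmid(T_X)$ the challenges become deterministic functions of $Y_\beta$ alone (Claims~\ref{claim:qobs is fixed}--\ref{claim:the heart of it all}) and hence are responded, while at $\vvmid(T_X)$ the challenge contains a near-uniform row because $(Y_{\alpha,\beta})_{\uutop(T_Y)}$ remains a block-source after the fixings. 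Your proposal has no analogue of this two-level conditioning or of swapping the arguments of $\bext$, and without it ``emit a label $(\itop,\imid,\ibot)$'' is not a construction. Finally, you should name the ingredient the whole argument hinges on: Li's block-source $\times$ weak-source extractor (Theorem~\ref{thm:li extractor}) for $\polylog(n)$ entropy, which is used both inside the challenges and for the final output $\bext(x_{\obsvvmid(x,y)} \circ x, y)$; without an extractor with those parameters the outer entropy $\polylog(n)$ is unreachable.
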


We note that a sub-extractor for outer-entropy $\kout$ with $m$ output bits and error $\eps$ is a zero-error disperser for entropy $\kout$ with $\min(m, \log(1/\eps))$ output bits (the dependence in the inner-entropy $\kin$ is due to the fact that $m \le \kin$). Indeed, one can simply truncate the output of the sub-extractor to be short enough so that the error will be small enough to guarantee that any possible output is obtained. In particular, a sub-extractor for outer-entropy $\kout$ and inner-entropy $\kin = 1$, with error $\eps < 1/2$, induces a bipartite $2^{\kout}$-Ramsey-graph. Thus, Theorem~\ref{thm:subext main} readily implies Theorem~\ref{thm:main intro} and Theorem~\ref{thm:main intro dispersers}.

We further remark that the constants in Theorem~\ref{thm:subext main} depend on each other and so we made no attempt to optimize them. It is worth mentioning though that one can take $\kin = \kout^{1-\delta}$ for any constant $\delta > 0$, and even $\kin = \kout/\polylog(n)$ and still supporting outer-entropy $\kout = \polylog(n)$.

We hope that two-source sub-extractors can be of use in some cases where two-source extractors are required. Further, we believe that the techniques used for constructing sub-extractors are of value in future constructions of two-source extractors.

\subsection{Organization of this paper}

In Section~\ref{sec:cr overview} we give an informal overview of the challenge-response mechanism. Section~\ref{sec:overview} contains a comprehensive and detailed overview of our construction and analysis. These two sections are meant only for building up intuition. The reader may freely skip these sections at any point as we make no use of the results that appear in them.

In Section~\ref{sec:prelim} we give some preliminary definitions and results that we need. Section~\ref{sec:cr mec} contains the formal description of the challenge-response mechanism. In Section~\ref{sec:entropy trees} we present the notions of entropy-trees and tree-structured sources. Then, in Section~\ref{sec:the construction} we give the formal construction of our sub-extractor, and analyze it in Section~\ref{sec:anal}. Finally, in Section~\ref{sec:open} we list some open problems.

\section{Overview of the Challenge-Response Mechanism}\label{sec:cr overview}

Our construction of sub-extractors is based on the challenge-response mechanism that was introduced in~\cite{BKSSW10} and refined by~\cite{brsw06}. As we are aiming for a self-contained paper, in this section we explain how this powerful mechanism works. The challenge-response mechanism is delicate and fairly challenging to grasp. Thus, to illustrate the way the mechanism works, we give a toy example in Section~\ref{sec:overview playing with cr}.

\subsection{Motivating the challenge-response mechanism}\label{sec:overview for point six}

We start by recalling the notation of a block-source.

\begin{definition}
A random variable $X$ on $n$-bit strings is called an \emph{$(n,k)$-block-source}, or simply a $k$-block-source, if the following holds:
\begin{itemize}
  \item $\me(\lefty(X)) \ge k$, where $\lefty(X)$ is the length $n/2$ prefix of $X$.
  \item For any $x \in \sup(\lefty(X))$ it holds that $\me(\righty(X) \mid \lefty(X) = x) \ge k$, where $\righty(X)$ is the length $n/2$ suffix of $X$.
\end{itemize}
\end{definition}

In a recent breakthrough, Li~\cite{Li15} gave a construction of an extractor $\bext$ for two $n$-bit sources, where the first source is a $\polylog(n)$-block-source and the second is a weak-source with min-entropy $\polylog(n)$ (see Theorem~\ref{thm:li extractor}). Since our goal is to construct a two-source sub-extractor for outer-entropy $\polylog(n)$, a first attempt would be to show that any source $X$ with entropy $\polylog(n)$ has a subsource $X'$ that is a $\polylog(n)$-block-source. If this assertion were to be true then $\bext$ would have been a two-source sub-extractor.

This however is clearly not the case. Consider, for example, a source $X$ that all of its entropy is concentrated in its right-block $\righty(X)$. Namely, $\lefty(X)$ is fixed to some constant and $\righty(X)$ has min-entropy $k$. Clearly, $\me(X) \ge k$, yet no subsource of $X$ is even a $1$-block-source.

This example holds only when the entropy is no larger than $n/2$. Indeed, one cannot squeeze, say, $0.6n$ entropy to the $n/2$ bits of $\righty(X)$. Restricting ourselves, for the moment, to the very high entropy regime, we ask whether this example is the only problematic example. In particular, is it true that any source with min-entropy $0.6n$ is a block-source? The answer to this question is still no. Nevertheless, one can show that a $0.6n$-weak-source on $n$-bits has a low-deficiency subsource that is a $0.1n$-block-source. This observation will be important for us later on.

Note that by this observation, $\bext$ is a sub-extractor for two sources with min-entropy $0.6n, \polylog(n)$. However, by the example above, $\bext$ by itself is not a sub-extractor for two sources with min-entropy less than $0.5n$. Nevertheless, as we will see, $\bext$ is a central component in our construction.

Going back to the example, if only there were a magical algorithm that given a single sample $x \sim X$, would have been able to determine correctly whether or not $\lefty(X)$ is fixed to a constant, then we would have been in a better shape as we would have known to concentrate our efforts on $\righty(X)$. Such an algorithm, however, is too much to hope for. Indeed, given just one sample $x \sim X$, one simply cannot tell whether the left block of $X$ is fixed or not. Still, the powerful challenge-response mechanism allows one to accomplish almost this task. In the next section we present a slightly informal version of the challenge-response mechanism. The actual mechanism is described in Section~\ref{sec:cr mec}. Our presentation is somewhat more abstract than the one used in~\cite{BKSSW10,brsw06}.

\subsection{The challenge-response mechanism}\label{sec:overview cr mech}

We start by presenting a dream-version of the challenge-response mechanism.

\subsubsection*{The challenge-response mechanism -- dream version}
For integers $\ell < n$, a dream version of the \emph{challenge-response mechanism} would be a $\poly(n)$-time computable function
$$
\dreamresp \colon \{0,1\}^n \times \{0,1\}^n \times \{0,1\}^\ell \to \{ \fixed, \hasent \}
$$
with the following property. For any two independent $(n,\polylog(n))$-sources $X,Y$, and for any function $\ch \colon \{0,1\}^n \times \{0,1\}^n \to \{0,1\}^\ell$, the following holds:
\begin{itemize}
  \item If $\ch(X,Y)$ is fixed to a constant then
      $$
      \pr_{(x,y) \sim (X,Y)}\left[ \dreamresp\left(x,y,\ch(x,y)\right) = \fixed \right] = 1.
      $$
  \item If $\me(\ch(X,Y))$ is sufficiently large then
      $$
      \pr_{(x,y) \sim (X,Y)}\left[ \dreamresp\left(x,y,\ch(x,y)\right) = \hasent \right] = 1.
      $$
\end{itemize}

Note that for any function $\ch$, $\dreamresp$ distinguishes between the case that $\ch(X,Y)$ is fixed and the case that $\ch(X,Y)$ has enough entropy. Unfortunately, $\dreamresp$ will remain a dream. The actual challenge-response mechanism requires more from the inputs and has a weaker guarantee on the output. The difference between the dream version and the actual challenge-response mechanism contributes to why our sub-extractor is defined the way it does, and so already in this section we present the actual challenge-response mechanism (in a slightly informal manner).

\subsubsection*{The actual challenge-response mechanism}

For integers $k < \ell < n$, the challenge-response mechanism is a $\poly(n)$-time computable function
$$
\crm \colon \{0,1\}^n \times \{0,1\}^n \times \{0,1\}^\ell \to \{ \fixed, \hasent \}
$$
with the following property. For any two independent $(n,\polylog(n))$-sources $X,Y$, and for any function $\ch \colon \{0,1\}^n \times \{0,1\}^n \to \{0,1\}^\ell$, the following holds:
\begin{itemize}
  \item If $\ch(X,Y)$ is fixed to a constant then there exist deficiency $\ell$ subsources $X' \subset X$, $Y' \subset Y$, such that
      $$
      \pr_{(x,y) \sim (X',Y')}\left[ \crm(x,y,\ch(x,y)) = \fixed \right] = 1.
      $$
  \item If for any deficiency $\ell$ subsources $\hat{X} \subset X$, $\hat{Y} \subset Y$ it holds that $\me(\ch(\hat{X},\hat{Y})) \ge k$, then
      $$
      \pr_{(x,y) \sim (X,Y)}\left[ \crm(x,y,\ch(x,y)) = \hasent \right] \ge 1 - 2^{-k}.
      $$
\end{itemize}

We emphasize the differences between the dream-version and the actual challenge-response mechanism. First, even if $\ch(X,Y)$ is fixed to a constant, it is not guaranteed that $\crm$ will correctly identify this on any sample from $(X,Y)$. In fact, it is not even guaranteed that $\crm$ will identify this correctly with high probability over the sample. The actual guarantee is that there exist low-deficiency subsources $X' \subset X$, $Y' \subset Y$, such that on any sample $(x,y) \sim (X',Y')$, $\crm$ will correctly output $\fixed$. As our goal is to construct a sub-extractor, this is good enough for us, as we can ``imagine'' that we are given samples from $X',Y'$ rather than from $X,Y$ for the rest of the analysis (we do have to be careful when dealing with error terms when moving to subsources, but we will ignore this issue for now).

The second thing to notice is that for the challenge-response mechanism to identify the fact that $\ch(X,Y)$ has entropy, a stronger assumption is made. Namely, it is not enough that $\ch(X,Y)$ has a sufficient amount of entropy, but rather we need that $\ch(\hat{X},\hat{Y})$ has enough entropy for \emph{all} low-deficiency subsources $\hat{X} \subset X$, $\hat{Y} \subset Y$. So, informally speaking, for the challenge-response mechanism to identify entropy, this entropy must be robust in the sense that the entropy exists even in all low-deficiency subsources. Further, note that unlike the first case, in the second case $\crm$ introduces a small error.

\subsection{The three-types lemma}\label{sec:overview three cases lemma}

The challenge-response mechanism is indeed very impressive. However, the mechanism only distinguishes between two extreme cases -- no entropy versus high entropy. It is much more desired to be able to distinguish between low entropy versus high entropy. Indeed, what if the entropy in the left block of a source is too low to work with, yet the block is not constant and so the challenge-response mechanism is inapplicable?

The next lemma shows that if we are willing to work with subsources then this is a non-issue. Namely, every source has a low-deficiency subsource with a structure suitable for the challenge-response mechanism. We present here a slightly informal version of this lemma. The reader is referred to Lemma~\ref{lemma:three cases lemma} for a formal statement.

\begin{lemma}[The three-types lemma]\label{lemma:three cases lemma informal}
For any $(n,k)$-source $X$ and an integer $b < k$, there exists a deficiency $1$ subsource $X' \subset X$ such that (at least) one of the following holds:
\begin{itemize}
  \item $X'$ is a $b$-block-source.
  \item $\me(\lefty(X')) \ge k-b$.
  \item $\lefty(X')$ is fixed to a constant and $\me(\righty(X')) \ge k - b$.
\end{itemize}
\end{lemma}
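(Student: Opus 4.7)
The argument is a trichotomy on the left block $L := \lefty(X)$. The easy case is $\me(L) \ge k - b$: we simply take $X' = X$, which is a (deficiency-$0$, hence deficiency-$1$) subsource of itself and satisfies the second bullet.

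In the interesting regime $\me(L) < k - b$, the plan is to classify each $\ell$ by how entropic the right block is conditional on $L = \ell$, and then perform a majority split. Define
$$
G := \{\ell \in \sup(L) : \me(\righty(X) \mid L = \ell) \ge b\}, \qquad B := \sup(L) \setminus G,
$$
so that either $\Pr[L \in G] \ge 1/2$ or $\Pr[L \in B] > 1/2$. In the first case (Scenario A), take $X' := X \mid L \in G$, a deficiency-$1$ subsource; by construction every $\ell$ in the support of $\lefty(X')$ satisfies $\me(\righty(X') \mid \lefty(X') = \ell) \ge b$, so if additionally $\me(\lefty(X')) \ge b$ we land in the first bullet ($X'$ is a $b$-block-source), and otherwise a heavy left atom $\ell^\star$ lets us fix $\lefty(X')$ and fall through to the third bullet. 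In the second case (Scenario B), take $X' := X \mid L \in B$: every $\ell$ in the support now has $\me(\righty(X') \mid \lefty(X') = \ell) < b$, and combining this with $\me(X') \ge k - 1$ gives
$$
\Pr[\lefty(X') = \ell] \cdot 2^{-\me(\righty(X') \mid \lefty(X') = \ell)} \;=\; \max_{r}\Pr[X' = (\ell, r)] \;\le\; 2^{-(k-1)},
$$
whence $\Pr[\lefty(X') = \ell] < 2^{b - k + 1}$ for every $\ell$, i.e.\ $\me(\lefty(X')) \ge k - b - 1$, delivering the second bullet.

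The main obstacle is the fall-through inside Scenario A when $\me(\lefty(X'))$ sits in the awkward range $(1, b)$: the deficiency-$1$ budget is too tight to directly fix $\lefty(X')$ to its most-probable value, which will typically only have probability on the order of $2^{-b}$ rather than $\ge 1/2$. Closing this gap likely requires absorbing a small amount of slack --- either a mild relaxation of the deficiency, or small additive gaps in the entropy thresholds in the formal Lemma~\ref{lemma:three cases lemma} --- and the main bookkeeping step is verifying that, with such slack, every sub-case of Scenario A lands cleanly inside one of the three bullets at the stated cost.
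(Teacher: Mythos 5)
Your two-way majority split is a genuinely different decomposition from the paper's, and the gap you flag at the end is exactly where the two diverge. The paper proves the formal version (Lemma~\ref{lemma:three cases lemma}) via the ``fixing entropies'' lemma of Barak~\etal(Lemma~\ref{lem:brsw fixing entropies}), applied with \emph{two} thresholds $\tau_1 = \sqrt{k}$ and $\tau_2 = k-\sqrt{k}-1$. That lemma classifies left-atoms $\ell$ by which of the three intervals $[0,\tau_1]$, $[\tau_1,\tau_2]$, $[\tau_2,n]$ contains $\me(\righty(X)\mid L=\ell)$, conditions on the heaviest bucket, and --- crucially --- records the chain-rule constraint $\me(\lefty(X'))+\tau_{i+1}\ge k-1$. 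The middle bucket $i=1$ then automatically yields a block-source: the right block has conditional entropy at least $\tau_1$ by the classification, and the left block has entropy at least $k-1-\tau_2=\sqrt{k}$ by the constraint. The top bucket $i=2$ has right-block conditional entropy at least $\tau_2$, so one fixes the left block and the right block still carries entropy roughly $k-b$.

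Your set $G$ merges the paper's buckets $i=1$ and $i=2$ into one. Once merged, you have no \emph{upper} bound on $\me(\righty(X')\mid \lefty(X')=\ell)$, and therefore, via the chain rule, no lower bound on $\me(\lefty(X'))$ --- which is exactly what produces the regime $1<\me(\lefty(X'))<b$ that you correctly identify as unresolved. (It is worse than you state: even allowing deficiency $b$ to fix the left block, the threshold $b$ only certifies right-block entropy $\ge b$, not the $\ge k-b$ demanded by the third bullet.) The missing idea is the second, upper threshold near $k-b$: coupling the left- and right-block entropies by the chain rule is what eliminates the intermediate case. Your Scenario~B matches the paper's $i=0$ case and is sound; the initial $\me(L)\ge k-b$ shortcut is fine but redundant with it.
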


Take for example $b = \sqrt{k}$. Lemma~\ref{lemma:three cases lemma informal}, which is a variant of the two-types lemma by Barak~\etal~\cite{brsw06}, states a fairly surprising fact about weak-sources. Any source $X$ has a deficiency $1$ subsource $X'$ with a useful structure. If $X'$ is not a block-source then either essentially all of the entropy already appears in $\lefty(X')$, or otherwise $\lefty(X')$ is fixed to a constant (great news for users of the challenge-response mechanism) and $\righty(X')$ has essentially all the entropy of $X$.

\subsection{Playing with the challenge-response mechanism}\label{sec:overview playing with cr}

Lemma~\ref{lemma:three cases lemma informal} is an important supplement to the challenge-response mechanism. However, it is still not even clear how the two together can be used to break the ``0.5 barrier'' discussed in Section~\ref{sec:overview for point six}. For example, how they together can be used to give a sub-extractor for outer-entropies $0.4n, \polylog(n)$.

Lets try to see what can be said. Say $X$ is an $(n,0.4n)$-source. By Lemma~\ref{lemma:three cases lemma informal}, applied with $b = \sqrt{n}$, there exists a deficiency $1$ subsource $X'$ of $X$, such that one of the following holds:
\begin{itemize}
  \item $X'$ is a $\sqrt{n}$-block-source.
  \item $\me(\lefty(X')) \ge 0.4n-\sqrt{n} \ge 0.3n$. 
  \item $\lefty(X')$ is fixed to a constant and $\me(\righty(X')) \ge 0.4n - \sqrt{n} \ge 0.3n$.
\end{itemize}

Note that in the second case, $\lefty(X')$ has entropy-rate $0.6$. Thus, it has a deficiency $1$ subsource that is a block-source. Similarly, in the third case, $\righty(X')$ has a deficiency $1$ subsource that is a block-source. Thus, any $(n,0.4n)$-source has a deficiency $2$ subsource $X'' \subset X$ such that at least one of $X''$, $\lefty(X'')$, $\righty(X'')$ is a $\sqrt{n}$-block-source.

Given this, even without resorting to the challenge-response mechanism, we know that at least one of $\bext(X'',Y), \bext(\lefty(X''),Y), \bext(\righty(X''),Y)$ is close to uniform. The challenge-response mechanism allows us to obtain something stronger. Although we will not be able to get a sub-extractor for outer-entropies $0.4n, \polylog(n)$ this way, it is instructive to see the technique being used. Set $\bext$ to output $\ell = o(k)$ bits, where $k = \polylog(n)$ is the outer-entropy of the second source. Consider the following algorithm.

\subsubsection*{The toy algorithm.}
On input $x,y \in \{0,1\}^n$
\begin{itemize}
  \item Compute $z(x,y) = \crm(x,y,\bext(\lefty(x),y))$.
  \item If $z = \fixed$ declare that $\bext(\righty(x),y)$ is uniform.
  \item Otherwise, declare that one of $\bext(x,y)$, $\bext(\lefty(x),y)$ is uniform.
\end{itemize}

The above algorithm does not look very impressive. Essentially, it only cuts down our lack of knowledge a bit. Instead of declaring that one of three strings is close to uniform, it is able to declare that one of at most two strings is close to uniform. Nevertheless, as mentioned above, it is instructive to see the proof technique on this simple toy example. Moreover, as we will see in Section~\ref{sec:overview brsw}, this algorithm is a special case of an algorithm by~\cite{brsw06} that will be important to our construction. We now prove that the algorithm's declaration is correct. More precisely, we prove the following.

\begin{claim}\label{claim:toy claim}
Let $X$ be an $(n,0.4n)$-source, and let $Y$ be an independent $(n,\polylog(n))$-source. Then, there exist deficiency $O(\ell)$-subsources $X' \subset X$, $Y' \subset Y$, such that with probability $1$ over $(x,y) \sim (X',Y')$ the declaration of the algorithm is correct.
\end{claim}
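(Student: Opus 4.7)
The plan is to structure the argument around two tools: the three-types lemma (Lemma~\ref{lemma:three cases lemma informal}) applied to $X$, and the two guarantees of the challenge-response mechanism $\crm$ applied to the challenge $\ch(x,y) \deff \bext(\lefty(x), y)$. First, I would apply the three-types lemma to $X$ with parameter $b = \sqrt{n}$ and then use the Section~\ref{sec:overview for point six} observation that a $0.6n$-weak-source on $n$ bits admits a deficiency-$1$ block-source subsource, to arrive at a deficiency-$2$ subsource $X_1 \subset X$ falling into exactly one of: (a) $X_1$ is a $\sqrt{n}$-block source; (b) $\lefty(X_1)$ is a $0.05n$-block source; or (c) $\lefty(X_1)$ is fixed and $\righty(X_1)$ is a $0.05n$-block source.

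Next, I would invoke the challenge-response mechanism on $\ch$, which yields a dichotomy: either (I) there exist deficiency-$O(\ell)$ subsources $X_2 \subset X_1$, $Y_2 \subset Y$ on which $\ch$ is fixed to a constant, in which case the first guarantee of $\crm$, after passing to still-further low-deficiency subsources, yields $z = \fixed$ pointwise; or (II) $\ch(\hat X, \hat Y)$ has min-entropy at least $k$ on all low-deficiency subsources $\hat X \subset X_1$, $\hat Y \subset Y$, in which case the second guarantee gives $z = \hasent$ with probability $1 - 2^{-k}$, and restricting to the subsource where this holds makes it pointwise. In case (c), a small preliminary refinement of $Y$ (conditioning on $\bext(c,Y)$ hitting its most likely value, where $c$ is the constant value of $\lefty(X_1)$) places us in (I) at an extra deficiency cost of $\ell$; in case (b), the extractor guarantee for $\bext$ makes $\ch$ close to uniform and hence robustly high-entropy, placing us in (II).

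I would then verify correctness case by case. In case (c) combined with (I), the declaration ``$\bext(\righty(x), y)$ uniform'' is correct because $\righty(X_1)$ is a $0.05n$-block source and survives the $O(\ell)$-deficiency restriction. In case (b) combined with (II), the declaration ``one of $\bext(x,y)$, $\bext(\lefty(x),y)$ uniform'' is correct because $\bext(\lefty(X_1), Y)$ is already close to uniform. In case (a) combined with (II), the same declaration is correct because $\bext(X_1,Y)$ is uniform by the block-source property of $X_1$, and this survives passage to a low-deficiency subsource.

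The delicate configuration will be case (a) combined with (I): $X_1$ is a $\sqrt{n}$-block source, but a low-deficiency sub-restriction of $(X_1,Y)$ nonetheless fixes $\bext(\lefty(\cdot),\cdot)$. The obstacle is that $\lefty(X_1)$ has min-entropy only $\sqrt{n}$ in $n/2$ bits and need not be a block source; symmetrically for $\righty(X_1)$; so on the $\fixed$ branch the declaration ``$\bext(\righty(x),y)$ uniform'' is not automatic. I would expect to handle this by an iterated application of the three-types lemma inside $\lefty(X_1)$: each recursive level either exposes a block-source refinement on $\lefty$ (collapsing us into case (b)) or fixes further prefix bits (driving us toward case (c)), and the recursion halts once the block-source parameter drops to the $\polylog(n)$ threshold that $\bext$ requires, so the cumulative deficiency stays within $O(\ell)$. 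Verifying that this recursive refinement terminates with one of the three declarations true, and tracking the deficiencies through all the subsource passages, will be the main technical step.
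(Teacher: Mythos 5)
Your three-types split and your handling of (c)+(I), (b)+(II), and (a)+(II) track the paper's proof: its Case~1 ($\lefty(X)$ fixed) is your (c)+(I), and its Case~2 ($\lefty(X)$ not fixed) covers your (a) and (b) by observing that one of $\bext(X,Y)$, $\bext(\lefty(X),Y)$ is close to uniform, \emph{without ever verifying that $z$ takes the $\hasent$ branch}. You were right to be uneasy about that; the paper simply leaves this unaddressed, which it can afford since Section~\ref{sec:cr overview} is an explicitly motivational sketch (``these two sections are meant only for building up intuition'').

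The repair you propose, however, does not work. First, (I) and (II) are not a dichotomy. Theorem~\ref{thm:cr} gives a conclusion only when $\ch$ is literally constant on suitable subsources, or when $\ch$ is robustly close to full min-entropy on \emph{all} low-deficiency subsources; between these lies a gray zone in which the mechanism guarantees nothing. In case (a), $\lefty(X_1)$ has min-entropy $\sqrt{n}$ but is in general not a block-source, so $\bext(\lefty(X_1),Y)$ may be neither constant nor close to uniform, and case (a) cannot be classified as (I) or (II) at all. Second, the recursion inside $\lefty(X_1)$ is a purely source-side refinement and therefore cannot rescue the $\fixed$ branch: $z$ is a fixed function of $\bext(\lefty(x),y)$, and $\bext$ extracts only when its first argument is a block-source at the \emph{top-level} split, so exhibiting a block-source several levels deep inside $\lefty(X_1)$ changes nothing about the value of $\bext(\lefty(x),y)$ or about $z$. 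A source-side refinement pays off only once the algorithm itself is redesigned to probe the level where the block-source actually lives---which is exactly what the entropy-tree machinery of Sections~\ref{sec:entropy trees}--\ref{sec:anal} accomplishes, and which the toy algorithm by construction cannot do.
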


The proof of Claim~\ref{claim:toy claim} showcases the following three facts about low-deficiency subsources. Non of these facts is very surprising, but we make extensive use of them throughout the paper, and it is beneficial to see these facts in action on a simple example. Here we give slightly informal statements. For the formal statements see Fact~\ref{fact:def and entropy}, Fact~\ref{fact:def and fixing}, and Lemma~\ref{lem:subsource of block source}.

\begin{fact}\label{fact:def and entropy overview}
If $\me(X) \ge k$ and $X'$ is a deficiency $d$ subsource of $X$ then $\me(X') \ge k-d$.
\end{fact}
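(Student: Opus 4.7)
The plan is to unpack the definitions of min-entropy and of deficiency $d$ subsource, and then observe that conditioning on an event of probability at least $2^{-d}$ can blow up any individual probability by at most a factor of $2^d$.

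Concretely, I would start from the definition of subsource: there exists a set $A \subseteq \{0,1\}^n$ with $\Pr[X \in A] \ge 2^{-d}$ such that, for every $a \in A$,
\[
\Pr[X' = a] \;=\; \Pr[X = a \mid X \in A] \;=\; \frac{\Pr[X = a]}{\Pr[X \in A]},
\]
and $\Pr[X' = a] = 0$ for $a \notin A$. Note that $\sup(X') \subseteq A$, so it suffices to bound $\Pr[X' = a]$ for $a \in A$.

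Next I would combine two elementary upper bounds: the min-entropy hypothesis gives $\Pr[X = a] \le 2^{-k}$ for every $a$, and the deficiency condition gives $1/\Pr[X \in A] \le 2^d$. Plugging these into the displayed identity yields $\Pr[X' = a] \le 2^{d - k}$ for every $a \in \sup(X')$. Taking $-\log_2$ and minimizing over the support gives exactly $\me(X') \ge k - d$, as desired.

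There is no real obstacle here; the entire content of the fact is the multiplicative effect of conditioning, and the argument is a one-line calculation once the definitions are written out. The only thing one should be a bit careful about is to handle $a \notin A$ trivially (since such $a$ are not in the support of $X'$) and not to confuse $\Pr[X' = a]$ with $\Pr[X = a]$ outside of $A$.
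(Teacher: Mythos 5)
Your proof is correct and complete: it is exactly the one-line calculation that this fact amounts to, combining $\Pr[X=a]\le 2^{-k}$ with $\Pr[X\in A]\ge 2^{-d}$ to get $\Pr[X'=a]\le 2^{d-k}$ on the support, then taking logs. The paper itself does not prove this fact but cites it as Fact 3.11 of~\cite{brsw06}; your argument is the standard one and matches what that reference does.
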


\begin{fact}\label{fact:def and fixing overview}
Let $X$ be a random variable on $n$-bit strings. Let $f \colon \{0,1\}^n \to \{0,1\}^\ell$ be an arbitrary function. Then, there exists $a \in \{0,1\}^\ell$ and a deficiency $\ell$ subsource $X'$ of $X$ such that $f(x) = a$ for every $x \in \sup(X')$.
\end{fact}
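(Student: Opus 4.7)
The plan is a short pigeonhole/averaging argument that uses nothing more than the definition of a deficiency $\ell$ subsource. The function $f$ partitions $\{0,1\}^n$ into at most $2^\ell$ fibers $f^{-1}(a)$ indexed by $a \in \{0,1\}^\ell$. The events $\{X \in f^{-1}(a)\}$ are disjoint and cover $\sup(X)$, so their probabilities sum to $1$. By averaging, there exists at least one $a^\star \in \{0,1\}^\ell$ with $\pr[f(X) = a^\star] \ge 2^{-\ell}$.

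Fix such an $a^\star$ and set $A = f^{-1}(a^\star)$. Then $\pr[X \in A] \ge 2^{-\ell}$, which is exactly the deficiency condition required in the paper's definition of a subsource (with $d = \ell$). Define $X' = (X \mid X \in A)$; by construction $X'$ is a deficiency $\ell$ subsource of $X$, and for every $x \in \sup(X') \subseteq A$ we have $f(x) = a^\star$ by the definition of $A$. This yields both promised conclusions simultaneously, with the choice $a = a^\star$.

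There is no real obstacle here: the only point to verify is that the subsource definition quantifies over arbitrary sets $A \subseteq \{0,1\}^n$, so we are indeed allowed to take $A$ to be a fiber of $f$ of large enough mass. The only quantitative input is the pigeonhole bound on the number of fibers, which gives exactly the deficiency $\ell$ appearing in the statement and matches the claim tightly.
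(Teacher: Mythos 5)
Your proof is correct, and it is the standard pigeonhole argument for this fact; the paper itself does not reprove it but cites~\cite{brsw06} (Fact~3.13), where the same averaging-over-fibers argument appears.
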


\begin{lemma}\label{lem:subsource of block source overview}
Let $X$ be a $k$-block-source, and let $X'$ be a deficiency $d$ subsource of $X$. Then, $X'$ is a $k-d$ block-source.
\end{lemma}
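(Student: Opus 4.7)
The plan is to verify the two defining conditions of a block-source for $X'$ directly from the definitions. Let $A \subseteq \{0,1\}^n$ be the set witnessing the subsource relation, so $X' = (X \mid A)$ and $\Pr[X \in A] \ge 2^{-d}$.

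First, I would bound the min-entropy of the left block. For every value $a$,
\[
\Pr[\lefty(X') = a] \;=\; \frac{\Pr[\lefty(X)=a,\, X \in A]}{\Pr[X \in A]} \;\le\; \frac{\Pr[\lefty(X)=a]}{2^{-d}} \;\le\; 2^{d-k},
\]
using that $X$ is a $k$-block-source in the last step. This gives $\me(\lefty(X')) \ge k - d$, which is essentially Fact~\ref{fact:def and entropy overview} applied to the marginal $\lefty(X)$.

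For the second condition, fix $a \in \sup(\lefty(X'))$ and set $p_a = \Pr[X \in A \mid \lefty(X)=a]$. The conditional distribution $(\righty(X') \mid \lefty(X')=a)$ is exactly $(\righty(X) \mid \lefty(X)=a)$ further conditioned on an event of probability $p_a$, hence a deficiency $\log(1/p_a)$ subsource of it. The block-source property of $X$ together with Fact~\ref{fact:def and entropy overview} then yields
\[
\me(\righty(X') \mid \lefty(X')=a) \;\ge\; k - \log(1/p_a).
\]

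The main obstacle is that $\log(1/p_a)$ need not be bounded by $d$ uniformly over $a \in \sup(\lefty(X'))$: a deficiency-$d$ set $A$ may have very small conditional density on some specific left-block values, even if its overall mass is $2^{-d}$. To handle this I would filter out the bad set $B = \{a : p_a < 2^{-(d+t)}\}$ for a small slack $t \ge 1$. A direct averaging argument shows
\[
\Pr[\lefty(X') \in B] \;\le\; \frac{1}{\Pr[X \in A]} \sum_{a \in B} \Pr[\lefty(X)=a]\, p_a \;\le\; 2^{-t},
\]
so restricting $X'$ further to the event $\lefty(X') \notin B$ incurs negligible additional deficiency and produces a genuine $(k-d-t)$-block-source on which the worst-case bound $\log(1/p_a) \le d + t$ holds. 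The formal version of Lemma~\ref{lem:subsource of block source overview} absorbs this small slack into its quantitative parameters, which is why the overview presents the statement as slightly informal; the entire content of the proof is then the two short entropy calculations above together with this one-line filtering step.
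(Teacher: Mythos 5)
Your proof is correct, and you have correctly diagnosed why the statement cannot hold as literally written: the conditional densities $p_a = \Pr[X \in A \mid \mathsf{left}(X) = a]$ need not all be at least $2^{-d}$, which is exactly why the formal version, Lemma~\ref{lem:subsource of block source} (quoted from~\cite{brsw06}, Corollary~3.19), asserts only $\eps$-closeness to a $(k-d-\log(1/\eps)-1)$-block-source rather than exact block-source structure. Your filtering step --- discarding the set $B$ of left-block values with $p_a < 2^{-(d+t)}$, bounding its $X'$-mass by $2^{-t}$ via the averaging computation, and then taking $t = \log(1/\eps)$ --- recovers precisely that formal statement, with constants at least as good as those quoted. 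The paper itself does not include a proof of this lemma; it is imported from~\cite{brsw06}, and the argument you gave is the standard one underlying it.
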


\begin{proof}
We start by applying the three-types lemma as discussed above so to obtain a deficiency $2$ subsource of $X$ with the properties lists above. For ease of notation, we denote this source by $X$.

Assume first that $\lefty(X)$ is fixed. Note that in this case $\bext(\lefty(X),Y)$ is a deterministic function of $Y$. Since the output length of $\bext$ is $\ell$, Fact~\ref{fact:def and fixing overview} implies that there exists a deficiency $\ell$ subsource $Y' \subset Y$ such that $\bext(\lefty(X),Y')$ is fixed to a constant. We are now in a position to apply the challenge-response mechanism so to conclude that there exist deficiency $\ell$ subsources $X' \subset X$, $Y'' \subset Y'$ such that
\begin{equation}\label{eq:first one}
\pr\left[z(X',Y'') = \fixed \right] = 1.
\end{equation}
Note that in this case, $\me(\righty(X)) \ge 0.3n$. Therefore, by Fact~\ref{fact:def and entropy overview}, $\me(\righty(X')) \ge 0.3n - \ell \ge 0.29n$. Since $\righty(X')$ has length $n/2$, we have that $\righty(X')$ has entropy-rate $0.58$, and so by Lemma~\ref{lemma:three cases lemma informal}, there exists a deficiency $1$ subsource $X''$ of $X'$ such that $X''$ is an $\Omega(n)$-block-source. Similarly, since $\me(Y) = k = \omega(\ell)$, Fact~\ref{fact:def and entropy overview} implies that $\me(Y'') = k - 2\ell = \polylog(n)$. Thus, $\bext(\righty(X''), Y'')$ is close to uniform.

To summarize, in the case that $\lefty(X)$ is fixed, there exist deficiency $O(\ell)$-subsources $X'' \subset X$, $Y'' \subset Y$ on which the algorithm correctly declares that $\bext(\righty(X''),Y'')$ is uniformly distributed.

Consider now the case that $\lefty(X)$ is not fixed. By Lemma~\ref{lemma:three cases lemma informal}, either $X$ is a $\sqrt{n}$-block-source, or otherwise $\me(\lefty(X)) \ge 0.3n$, and so $\lefty(X)$ has a deficiency $1$ subsource that is a $\sqrt{n}$-block-source. Therefore, the algorithm's declaration in this case is correct even on deficiency $O(1)$ subsources.
\end{proof}

In this section we gained some familiarity with the challenge-response mechanism and with the three-types lemma  (Lemma~\ref{lemma:three cases lemma informal}), which is an important supplement for the mechanism. Hopefully, this experience will assist the reader in the sequel.

\section{Overview of the Construction and Analysis}\label{sec:overview}

In this section, we present our construction of sub-extractors and give a comprehensive and detailed overview of the proof, though we allow ourselves to be somewhat imprecise whenever this contributes to the presentation. The formal proof, which can easily be recovered by the content of this section, appears in Section~\ref{sec:anal}. In Section~\ref{sec:overview entropy trees}, we introduce the notions of entropy-trees and tree-structured sources. A variant of this notion was used by~\cite{brsw06}. Then, in Section~\ref{sec:overview brsw}, we overview the approach taken by~\cite{brsw06} for their construction of two-source dispersers. Once the results needed from~\cite{brsw06} are in place, in Section~\ref{sec:overview our approach} we give an overview for the rest of our construction. In the following sections of this overview we give further details.

\subsection{Entropy-trees and tree-structured sources}\label{sec:overview entropy trees}

\subsubsection*{Motivating the notion of an entropy-tree}

We already saw that a source with entropy-rate $0.6$ has a deficiency $1$ subsource that is a block-source. By applying the three-types lemma (Lemma~\ref{lemma:three cases lemma informal}), we saw that any source $X$ with entropy-rate $0.4$ has a deficiency $2$ subsource that is either a block-source, or otherwise one of $\lefty(X), \righty(X)$ is a block-source. We, however, are interested in sources $X$ with only $\polylog(n)$ entropy. Is it true that there is a block-source ``lying somewhere'' in $X$ (or in a low-deficiency subsource of $X$)? Yes it is, though we have to dig deeper.

Say $X$ has min-entropy $k$. Lemma~\ref{lemma:three cases lemma informal}, set with $b = \sqrt{k}$, implies that there exists a deficiency $1$ subsource $X'$ of $X$ that is either a $\sqrt{k}$-block-source, or otherwise one of $\lefty(X'), \righty(X')$ has almost all the entropy of $X$. In other words, if $X'$ is not a block-source then the entropy-rate of one of $\lefty(X'), \righty(X')$ has almost doubled.

Assume that $X'$ is not a block-source, and that $\lefty(X')$ has entropy $k-\sqrt{k}$. By Lemma~\ref{lemma:three cases lemma informal}, set again with $b = \sqrt{k}$, there exists a deficiency $1$ subsource $X'' \subset X'$ such that either $\lefty(X'')$ is a $\sqrt{k}$-block-source, or otherwise one of $\lefty(\lefty(X''))$, $\righty(\lefty(X''))$ has min-entropy $k-2\sqrt{k}$. That is, if also $\lefty(X'')$ is not a $\sqrt{k}$-block-source then the entropy-rate of one of $\lefty(\lefty(X'')),\righty(\lefty(X''))$ is almost four-times the original entropy-rate of $X$.

At some point we are bound to find a block-source. Indeed, if we failed to find a block-source in the first $r$ iterations then there is a deficiency $r$ subsource $X^{(r)}$ of $X$ and a length $n \cdot 2^{-r}$ block of $X^{(r)}$ that has entropy $k-r\sqrt{k}$. Thus, if $k-r\sqrt{k} > 0.6 n \cdot 2^{-r}$ then this block has a deficiency $1$ subsource that is a block-source. In particular, if $k = \omega(\log^{2}{n})$ then a block-source will be found in the first $\log(n/k)+O(1)$ iterations.

As we apply Lemma~\ref{lemma:three cases lemma informal} at most $\log{n}$ times and since in each application we move to a deficiency $1$ subsource, we conclude that every $(n,k)$-source has a deficiency $\log{n}$ subsource that contains a block-source. This block-source can be found by following a certain ``path of entropy'' that determines which of the two halves of the current block of the source contains essentially all the entropy.

\subsubsection*{Entropy-trees}

The above discussion naturally leads to what we call an \emph{entropy-tree} and sources that have a tree-structure. An entropy-tree is a complete rooted binary tree $T$ where some of its nodes are labeled by one of the following labels: $\h,\bs,\f$, stand for high entropy, block-source, and fixed, respectively. The nodes of an entropy-tree are labeled according to rules that capture any possible entropy structure of a subsource obtained by the process described above. The rules are:

\begin{figure}[h]
\centering
\includegraphics[scale=0.5]{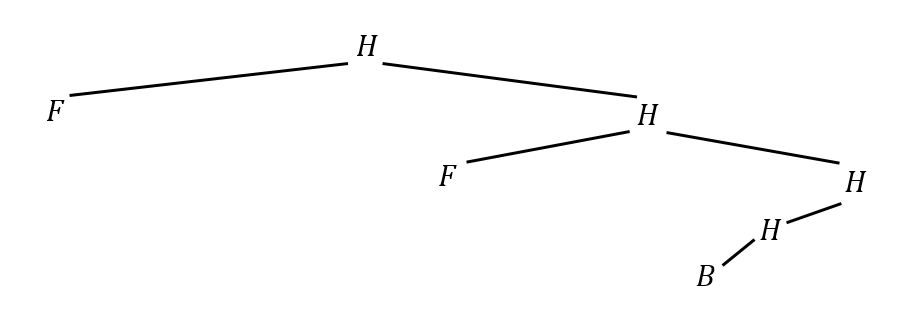}
\caption{An example of an entropy-tree. Unlabeled nodes and edges to them do not appear in the figure.}
\end{figure}

\begin{itemize}
  \item The root of $T$, denoted by $\root(T)$, is labeled by either $\h$ or $\bs$, expressing the fact that we assume the source itself has high entropy, and may even be a block-source.
  \item There is exactly one node in $T$ that is labeled by $\bs$, denoted by $v_\bs(T)$. This expresses the fact we proved, namely, if one digs deep enough, a block-source will be found. The uniqueness of the node labeled by $\bs$ captures the fact that we terminate the process once a block-source is found.
  \item If $v$ is a non-leaf that has no label, or otherwise labeled by $\f$ or $\bs$, then its sons have no label. This rule captures the fact that a node has no label if we are not interested in the block of the source that is associated with the node. Thus, if a block is fixed we do not try to look for a block-source inside it. Similarly, if the node is a block-source we stop the search.
  \item If $v$ is a non-leaf that is labeled by $\h$ then the sons of $v$ can only be labeled according to the following rules:
      \begin{itemize}
        \item If $\leftson(v)$ is labeled by $\f$ then $\rightson(v)$ is labeled by either $\h$ or $\bs$.
        \item If $\leftson(v)$ is labeled by either $\h$ or $\bs$ then $\rightson(v)$ has no label.
      \end{itemize}
      Note that these rules capture the guarantee of Lemma~\ref{lemma:three cases lemma informal}.
\end{itemize}

\paragraph{The entropy-path.} With every entropy-tree $T$ we associate a path that we call the \emph{entropy-path} of $T$. This is the unique path from $\root(T)$ to $v_\bs(T)$. We say that a path in $T$ contains the entropy-path if it starts at $\root(T)$ and goes through $v_\bs(T)$ (note that we allow an entropy-tree to have nodes that are descendants of $v_\bs(T)$. We just do not allow these nodes to be labeled).

\subsubsection*{Tree-structured sources}

Now that we have defined entropy-trees, we can say what does it mean for a source to have a $T$-structure, for some entropy-tree $T$. To this end we need to introduce some notations.
Let $n$ be an integer that is a power of $2$. With a string $x \in \{0,1\}^n$, we associate a depth $\log{n}$ complete rooted binary tree, where with each node $v$ of $T$ we associate a substring $x_v$ of $x$ in the following natural way. $x_{\root(T)} = x$, and for $v \neq \root(T)$, if $v$ is the left son of its parent, then $x_v = \lefty(x_{\parent(v)})$; otherwise, $x_v = \righty(x_{\parent(v)}$).

Let $T$ be a depth $\log{n}$ entropy-tree. An $n$-bit source $X$ is said to have a $T$-structure with parameter $k$ if for any node $v$ in $T$ the following holds:
\begin{itemize}
  \item If $v$ is labeled by $\f$ then $X_v$ is fixed to a constant.
  \item If $v$ is labeled by $\h$ then $\me(X_v) \ge k$.
  \item If $v$ is labeled by $\bs$ then $X_v$ is a $\sqrt{k}$-block-source.
\end{itemize}

With the notions of entropy-trees and tree-structured sources, we can summarize the discussion of this section by saying that any $(n,k)$-source, with $k = \omega(\log^{2}{n})$, has a deficiency $\log{n}$-subsource that has a $T$-structure with parameter $\Omega(k)$ for some entropy-tree $T$ (that depends on the underlying distribution of $X$). Therefore, for the purpose of constructing sub-extractors, we may assume that we are given two independent samples from tree-structured sources rather than from general weak-sources. Further, by Fact~\ref{fact:def and entropy overview} and by Lemma~\ref{lem:subsource of block source overview} it follows that if $X'$ is a deficiency $d$ subsource of a source having a $T$-structure with parameter $k = \omega(d)$, then $X'$ has a $T$-structure with parameter $\Omega(k)$. In particular, we can move to $o(k)$-deficiency subsources throughout the analysis and still maintain the original tree-structure of the source.

\subsection{Identifying the entropy-path}\label{sec:overview brsw}

Tree-structured sources certainly seem nicer to work with than general weak-sources. However, it is still not clear what good is this structure for if we do not have any information regarding the entropy-path.

Remarkably, by applying the challenge-response mechanism in a carefully chosen manner, Barak~\etal~\cite{brsw06} were able to identify the entropy-path of the entropy-tree $T$ given just one sample from $x \sim X$, where $X$ is a $T$-structured source, and one sample from $y \sim Y$, where $Y$ is a general weak-source that is independent of $X$. We now turn to describe the algorithm used by~\cite{brsw06}. Before we do so, it is worth mentioning that Barak~\etal proved something somewhat different. Indeed, they considered a variant of entropy-trees and had to prove something a bit stronger than what we need. Nevertheless, their proof can be adapted in a straightforward manner to obtain the result we describe next. For completeness, we reprove what is needed for our construction in Section~\ref{sec:anal of step 1}.

\subsubsection*{What does it mean to identify the entropy-path?}

What do we mean by saying that an algorithm identifies the entropy-path of an entropy-tree $T$? This is an algorithm that on input $x,y \in \{0,1\}^n$, outputs a depth $\log{n}$ rooted complete binary tree and a marked root-to-leaf path on that tree, denoted by $\pobs(x,y)$ -- the \emph{observed} entropy-path. Ideally, the guarantee of the algorithm would have been the following. If $x$ is sampled from a $T$-structured source $X$ and $y$ is sampled independently from a weak-source $Y$, then $\pobs(X,Y)$ contains the entropy-path of $T$ with probability $1$. That is, for any $(x,y) \in \sup((X,Y))$, if we draw the path $\pobs(x,y)$ on the entropy-tree $T$ then this path starts at $\root(T)$ and goes through $v_{\bs}(T)$.

Note that the path $\pobs(x,y)$ is allowed to continue arbitrarily after visiting $v_{\bs}(T)$. Requiring that $\pobs(x,y)$ will stop at $v_{\bs}(T)$ is a very strong requirement. In particular, it will conclude the construction of the sub-extractor. Indeed, once the block-source $X_{v_{\bs}(T)}$ is found, one can simply output $\bext(X_{v_{\bs}(T)}, Y)$.

This was an ideal version of what we mean by identifying an entropy-path. For our needs, we will be satisfied with a weaker guarantee. Following~\cite{brsw06}, we will show that there exist low-deficiency subsources $X' \subset X$, $Y' \subset Y$, such that with high probability over $(x,y) \sim (X',Y')$ it holds that $\pobs(x,y)$ contains the entropy-path of $T$.

The fact that we only have a guarantee on low-deficiency subsources is good enough for us as we are aiming for a sub-extractor. The fact that there is an error (that did not appear in the analysis of~\cite{brsw06}) should be handled with some care. Indeed, note that by moving to a deficiency $d$ subsource, an $\eps$ error in the original source can grow to at most $2^{d} \cdot \eps$ restricted to the subsource. We will make sure that the error is negligible compared to the deficiency we consider in the rest of the analysis. Thus, from here on we will forget about the error introduced in this step of identifying the entropy-path.

\subsubsection*{The algorithm of~\cite{brsw06} for identifying the entropy-path}

We now describe the algorithm used by~\cite{brsw06} for identifying the entropy-path of an entropy-tree $T$. The basic idea was depicted already in the toy algorithm from Section~\ref{sec:overview playing with cr}. In fact, what the toy algorithm was actually managed to do was to find the entropy-path in depth $2$ tree-structured sources.

We first note that if $\root(T) = v_{\bs}(T)$ then any observed entropy-path will contain $v_{\bs}(T)$. So, we may assume that this is not the case. Let $v$ be the parent of $v_{\bs}(T)$ in $T$. As a first step, we want to determine which of the two sons of $v$ is $v_{\bs}(T)$. To this end, we will use the toy algorithm from Section~\ref{sec:overview playing with cr}. More precisely, node $v$ declares that its left son is $v_{\bs}(T)$ if and only if
\begin{equation}\label{eq:who is my favored son}
\resp\left( x_{v}, y, \bext\left(x_{\leftson(v)}, y\right) \right) = \hasent.
\end{equation}
Lets pause for a moment to introduce some notations. If Equation~\eqref{eq:who is my favored son} holds, we say that the node $v$ $(x,y)$-favors its left-son; otherwise, we say that $v$ $(x,y)$-favors its right son. Moreover, we define the \emph{good son} of $v$ to be $v_{\bs}(T)$. More generally, for a node $u \neq v_{\bs}(T)$ that is an ancestor of $v_{\bs}(T)$, we define the \emph{good son} of $u$ to be its unique son that is an ancestor of $v_{\bs}(T)$. Note that by following the good sons from $\root(T)$ to $v_{\bs}(T)$ one recovers the entropy-path of $T$. Thus, to recover the entropy-path of $T$ it is enough that any ancestor of $v_{\bs}(T)$ on the entropy-path of $T$ favors its good son.

By following the proof of Claim~\ref{claim:toy claim}, one can see that if $X_{\leftson(v)}$ is fixed then Equation~\eqref{eq:who is my favored son} holds with probability $0$ on some low-deficiency subsources of $X,Y$. Further, by the challenge-response mechanism together with Fact~\ref{fact:def and entropy overview} and Lemma~\ref{lem:subsource of block source overview}, one can show that if $\leftson(v) = v_{\bs}(T)$ then with high probability over $(X,Y)$, Equation~\eqref{eq:who is my favored son} holds. Observe that by the definition of an entropy-tree, these are the only two possible cases.

We showed how $v_{\bs}(T)$ can convince its parent $v$ that it is its good son. The trick was to use the block-source-ness of $X_{v_{\bs}(T)}$ so to generate a proper challenge. Considering one step further, we ask the following. If $u$ is the parent of $v$, how can $v$ convince $u$ that it is its good son? After all, $v$ is not a block-source. The elegant solution of Barak~\etal is as follows. Given $x,y \in \{0,1\}^n$, the challenge of $v$ will contain not only $\bext(x_v, y)$ but also $\bext(x_w, y)$, where $w$ is $v$'s favored son. Thus, if $v$'s favored son happens to be its good son $v_{\bs}(T)$, then the challenge posed by $v$ will not be responded by $u$.

More generally, node $v$ decides which of its two sons it $(x,y$)-favors not according to Equation~\eqref{eq:who is my favored son} but rather according to wether or not
\begin{equation}\label{eq:who is my favored son really}
\resp\left( x_{v}, y, \ch\left(x_{\leftson(v)},y\right) \right) = \hasent,
\end{equation}
where $\ch(x_{\leftson(v)},y)$ is a matrix with at most $\log{n}$ rows (according to the depth of the tree) that contains $\bext(x_{\leftson(v)}, y)$ as row, as well as $\bext(x_w, y)$, where $w$ is the $(x,y)$-favored son of $\leftson(v)$, and also $\bext(x_r,y)$, where $r$ is the $(x,y)$-favored son of $w$, etc.

\subsubsection*{The strategy of~\cite{brsw06} for determining the output}

Having found the entropy-path of $T$, we are in a much better shape. We know that one of the nodes on the path is a block-source. The trouble is that we still do not know which one. We conclude this section by saying only a few words about the strategy taken by~\cite{brsw06} as at this point our strategy deviates from theirs. It is worth mentioning that the strategy taken by Barak~\etal for determining the output is one place in the construction that poses a bottleneck for supporting entropy $o(2^{\sqrt{\log{n}}})$. It is also the reason why the number of output bits in their construction is at most $O(\log\log{n})$ and why the construction can only be a disperser rather than a sub-extractor.

In order to output a non-constant bit, as required by a $1$ output bit disperser, Barak~\etal assumed that the source $X$ has some more structure. Not only $X$ should have a $T$-structure, but it is also required that $\lefty(X_{v_{\bs}(T)})$ has its own tree-structure. In particular, somewhere in the left block of $X_{v_{\bs}(T)}$ there should be a second block-source. Note that this extra structure required from $X$ can be assumed with almost no cost in parameters. Indeed, after applying the process from Section~\ref{sec:overview entropy trees} to $X$ so to obtain a deficiency $\log{n}$ subsource $X'$ of $X$ that has a tree-structure, one can simply apply the process again, this time to $\lefty(X'_{v_{\bs}(T)})$, so to find a deficiency $2\log{n}$ subsource of $X$ with the desired structure.

Having this ``double-block-source'' structure, Barak~\etal were able to carefully tune the parameters of the challenge-response mechanism so that with some probability $v_{\bs}(T)$ will be convinced that $X_{\leftson(v_{\bs}(T))}$ contains a block-source, yet with some probability it will fail to notice this. With some more delicate work, the fact that $v_{\bs}(T)$'s decision is not constant can be carried upwards all the way to $\root(T)$ and in turn, can be translated to an output bit that is non-constant.

\subsection{The strategy for the rest of our construction}\label{sec:overview our approach}

To carry the analysis of our sub-extractor, we require even more structure from our sources than the structure required by~\cite{brsw06}. First, we require \emph{both} $X$ and $Y$ to have a tree-structure. In previous works~\cite{BKSSW10,brsw06}, the second source $Y$ was used mainly to ``locate the entropy'' of the source $X$, and the only assumption on $Y$ was that it has a sufficient amount of entropy. We however will make use of the structure of $Y$ as well.

\begin{figure}[h]
\centering
\includegraphics[scale=0.5]{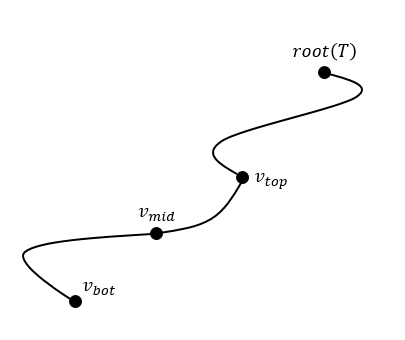}
\caption{The ``triple-block-source'' structure of an entropy-tree.}
\end{figure}

Second, we will need $X$ to have a ``triple-block-source'' structure. That is, we assume that $X$ has a $T_X$-structure with a node $\vvtop(T_X)$ corresponding to the block-source $X_{\vvtop(T_X)}$. We then assume that $\lefty(X_{\vvtop(T_X)})$ has its own tree-structure with a node $\vvmid(T_X)$ corresponding to a second block-source $X_{\vvmid(T_X)}$. Finally, we require that $\lefty(X_{\vvmid(T_X)})$ has its own tree-structure with a node $\vvbot(T_X)$ that corresponds to a third block-source $X_{\vvbot(T_X)}$.

As for $Y$, our analysis only requires a ``double-block-source'' structure. Though, to keep the notation cleaner we will assume that $Y$ also has a triple-block-source structure. In particular, the entropy-tree of $Y$, denoted by $T_Y$, has nodes that we denote by $\uutop(T_Y)$, $\uumid(T_Y)$, and $\uubot(T_Y)$ analogous to $\vvtop(T_X)$, $\vvmid(T_X)$, and $\vvbot(T_X)$ in $T_X$.

In fact, we allow ourselves to change the definition of an entropy-tree given in the previous section so that it will capture this ``triple-block-source'' structure, but the reader should not worry about these details at this point. For the formal definition see Section~\ref{sec:entropy trees}.

Given this structure of the sources, we are ready to give a high-level overview of our construction. In the subsequent sections of the overview, we give further details. Let $X$ be a $T_X$-structured source and let $Y$ be a $T_Y$-structured source, for some entropy-trees $T_X, T_Y$. At the first step, the sub-extractor identifies the entropy-path of $T_X$ and the entropy-path of $T_Y$ using the algorithm of~\cite{brsw06}. More precisely, given the samples $x,y$, we compute two paths denoted by \begin{align*}
\pobs(x,y) &= v_0(x,y), v_1(x,y), \ldots, v_{\log(n)-1}(x,y), \\
\qobs(x,y) &= u_0(x,y), u_1(x,y), \ldots, u_{\log(n)-1}(x,y).
\end{align*}
This step must be done with some care. From technical reasons, we cannot use $x,y$ to first find the entropy-path of $T_X$ and then to find the entropy-path of $T_Y$. Thus, in some sense, the two paths must be computed simultaneously.

At this point we have that there exist low-deficiency subsources $X' \subset X$, $Y' \subset Y$, such that for any $(x,y) \in \sup((X',Y'))$ it holds that $\pobs(x,y)$ (resp. $\qobs(x,y)$) contains the entropy-path of $T_X$ (resp. $T_Y$). In particular, we have that $v_{\dep(\vvtop(T_X))}(X',Y')$ is fixed to $\vvtop(T_X)$, and the same holds for $\vvmid(T_X), \vvbot(T_X)$, as well as for $\uutop(T_Y)$, $\uumid(T_Y)$, and $\uubot(T_Y)$. To keep the notation clean, we write $X,Y$ for $X',Y'$ in this  proof overview. That is, we assume that the entropy-paths are correctly identified on the tree-structured sources.

At the second step of the algorithm we identify $\vvmid(T_X)$ with high probability over subsources $X' \subset X$, $Y' \subset Y$. This sounds fantastic -- having found $\vvmid(T_X)$, we can simply output $\bext(X'_{\vvmid(T_X)}, Y')$. Unfortunately, however, the only way we know how to find $\vvmid(T_X)$ requires us to fix $\lefty(X'_{\vvmid(T_X)})$. That is, once found, $X'_{\vvmid(T_X)}$ is no longer a block-source. Moreover, to find $\vvmid(T_X)$ we also have to fix $\lefty(Y'_{\uumid(T_Y)})$.

We elaborate on how to find $\vvmid(T_X)$ in Section~\ref{sec:overview finding vmid}. Then in Section~\ref{sec:overview output}, we show how to determine the output of the sub-extractor even after loosing the block-structure of $X_{\vvmid(T_X)}$.

\subsection{Finding $\vvmid(T_X)$}\label{sec:overview finding vmid}
\subsubsection*{The node-path challenge and $\obsvvmid(x,y)$}
Given $x,y \in \{0,1\}^n$, the key idea we use for identifying $\vvmid(T_X)$ on $\pobs(x,y)$ lies in the design of a challenge that we call the \emph{node-path challenge}. Let $v$ be a node in $T_X$, and let $q = w_0, \ldots, w_{\log(n)-1}$ be a root-to-leaf path in $T_Y$. We define the challenge $\newch(x_v,y_q)$ to be the $\log(n)$-rows Boolean matrix such that for $i=0,1,\ldots,\log(n)-1$,
$$
\newch(x_v,y_q)_i = \bext\left(y_{w_i}, x_v\right).
$$
We define $\obsvvmid(x,y)$ to be the node $v$ on $\pobs(x,y)$ with the largest depth such that
\begin{equation}\label{eq:meshi ohelet}
\resp\left( x, y, \newch\left(x_v, y_{\qobs(x,y)}\right) \right) = \hasent.
\end{equation}

\begin{figure}[h]
\centering
\includegraphics[scale=0.6]{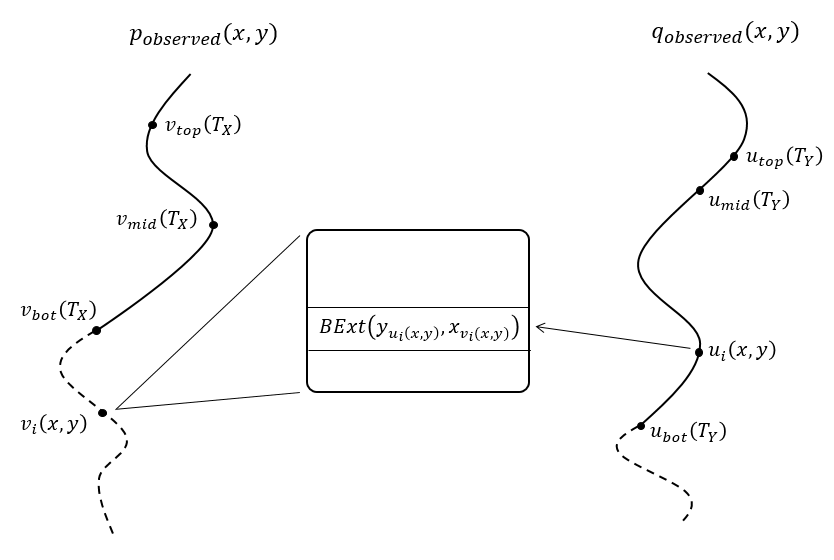}
\caption{The node-path challenge.}
\end{figure}

Ideally, we would want to prove that $\obsvvmid(x,y) = \vvmid(T_X)$ for any $(x,y) \in \sup((X,Y))$. By now we know that this is too much to ask for, and in any case, it suffices to prove that there exist low-deficiency subsources $X' \subset X$, $Y' \subset Y$ such that with high probability over $(x,y) \sim (X',Y')$ it holds that $\obsvvmid(x,y) = \vvmid(T_X)$. Unfortunately, we will not be able to prove that either. What we will be able to show is that there exist strings $\alpha,\beta$ such that the following holds. Define
\begin{align*}
X_\alpha &= X \mid \left( X_{\leftson(\vvmid(T_X))} = \alpha \right), \\
Y_\beta &= Y \mid \left( Y_{\leftson(\uumid(T_Y))} = \beta \right),
\end{align*}
and let $\imid(T_X)$ denote the depth of $\vvmid(T_X)$.

The way we choose $\alpha,\beta$ is with respect to the error that we constantly ignore throughout this overview. Thus, assume that $\alpha,\beta$ are chosen in such a way that allows us to continue ignoring the error. No further requirement is posed on $\alpha,\beta$.

\begin{proposition}\label{prop:heart overview}
There exist low-deficiency subsources $X_{\alpha,\beta} \subset X_\alpha$, $Y_{\alpha,\beta} \subset Y_\beta$, such that with high probability over $(x,y) \sim (X_{\alpha,\beta}, Y_{\alpha,\beta})$, it holds that
\begin{align*}
&\forall i > \imid(T_X) \quad \resp\left(x,y,\newch\left(x_{v_i(x,y)},y_{\qobs(x,y)}\right)\right) = \fixed,\\
&\resp\left(x,y,\newch\left(x_{\vvmid(T_X)},y_{\qobs(x,y)}\right)\right) = \hasent.
\end{align*}
\end{proposition}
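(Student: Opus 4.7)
The plan is to establish both bullets on a common pair of low-deficiency subsources $X_{\alpha,\beta} \subset X_\alpha$ and $Y_{\alpha,\beta} \subset Y_\beta$, split into two stages. Stage~1 arranges for the challenges at all indices $i > \imid(T_X)$ to be fixed to constants, so that the first bullet of the challenge-response mechanism yields $\resp = \fixed$. Stage~2 verifies that the challenge at $\vvmid(T_X)$ retains enough robust entropy for the second bullet to yield $\resp = \hasent$. Throughout we work on top of the small-deficiency subsources already delivered by the entropy-path-identification step, on which $\pobs$ contains the entropy path of $T_X$ and $\qobs$ contains that of $T_Y$.

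The structural basis of Stage~1 is the observation that for every $i > \imid(T_X)$ the node $v_i(x,y)$ lies inside the subtree rooted at $\leftson(\vvmid(T_X))$: for $\imid < i \leq \ibot$ because $v_i$ lies on the entropy path between $\vvmid$ and $\vvbot \subseteq \leftson(\vvmid)$, and for $i > \ibot$ because $v_i$ is then a descendant of $\vvbot$. Symmetrically, $u_j(x,y)$ lies in the subtree rooted at $\leftson(\uumid(T_Y))$ for $j > \jmid(T_Y)$. Consequently, after the conditioning $X_{\leftson(\vvmid)} = \alpha$ and $Y_{\leftson(\uumid)} = \beta$, the value $x_{v_i(x,y)}$ is always a substring of the fixed string $\alpha$, and for $j > \jmid$, $y_{u_j(x,y)}$ is a substring of $\beta$. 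I would first move to product subsources of $X_\alpha, Y_\beta$ on which the path suffixes $(v_{\ibot+1},\ldots)$ and $(u_{\jbot+1},\ldots)$ are fixed as constants; each suffix encodes only $O(\log n)$ bits of information, and iterated applications of Fact~\ref{fact:def and fixing overview} (exploiting independence of $X$ and $Y$ to decouple the joint dependence) cost only $O(\log n)$ deficiency. On the resulting subsources, each of the $O(\log^2 n)$ challenge entries $\bext(y_{u_j}, x_{v_i})$ for $i > \imid$ is either a fixed constant (when $j > \jmid$) or a deterministic function of $y$ alone (when $j \leq \jmid$, since $x_{v_i}$ is a fixed substring of $\alpha$). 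One further application of Fact~\ref{fact:def and fixing overview} to $Y$ fixes all of the $y$-dependent entries simultaneously at a cost of $O(\ell \log^2 n) = \polylog(n)$ deficiency. The entire challenge $\newch(x_{v_i(x,y)}, y_{\qobs(x,y)})$ for each $i > \imid$ is then fixed, and the first bullet of the challenge-response mechanism, applied over the $O(\log n)$ such indices, supplies further deficiency-$O(\ell \log n)$ subsources on which $\resp$ returns $\fixed$ with probability $1$.

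For Stage~2 I would exhibit robust entropy in the challenge at $\vvmid$ via the single row indexed by $j = \jtop(T_Y)$, namely $\bext(y_{\uutop(T_Y)}, x_{\vvmid(T_X)})$. Since $\leftson(\uumid)$ is a descendant of $\uutop$ lying inside $\lefty(\uutop)$, the conditioning $Y_{\leftson(\uumid)} = \beta$ only fixes a substring of $\lefty(Y_{\uutop})$, so by the analog of Lemma~\ref{lem:subsource of block source overview} the block-source structure of $Y_{\uutop}$ is preserved (with only a modest reduction in its left-entropy). Similarly, $X_{\vvmid}$ still has min-entropy at least $\sqrt{k}$ coming from $\righty(X_{\vvmid})$ after conditioning on $\lefty(X_{\vvmid}) = \alpha$. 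Because the Stage~1 deficiency is $\polylog(n) \ll \sqrt{k}$, these properties survive any further deficiency-$\ell$ restriction. Taking the output length of $\bext$ large enough and its error much smaller than $2^{-(\ell+k)}$ (via Theorem~\ref{thm:li extractor}), this single row is close enough to uniform that the whole challenge has min-entropy at least $k$ on every deficiency-$\ell$ subsource of the current pair. The second bullet of the challenge-response mechanism then delivers the desired subsources $X_{\alpha,\beta}, Y_{\alpha,\beta}$ on which $\resp = \hasent$ holds with probability at least $1 - 2^{-k}$.

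The principal obstacle is parameter bookkeeping: every conditioning step — identifying the paths, fixing $\alpha$ and $\beta$, fixing the path suffixes, fixing the $y$-dependent entries, and the two invocations of the challenge-response mechanism — must together contribute deficiency far smaller than $\sqrt{k}$, so that both the block-source structure of $Y_{\uutop}$ and the right-half entropy of $X_{\vvmid}$ survive Stage~2. A subtler issue is that $\pobs$ and $\qobs$ are joint functions of $(x,y)$ while Fact~\ref{fact:def and fixing overview} is stated only for single-variable functions; this has to be handled by exploiting independence of $X$ and $Y$ and sequentially conditioning, or by a mild product-subsource variant of the fixing fact that preserves low total deficiency.
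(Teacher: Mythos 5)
Your proposal follows the same two-stage skeleton as the paper's argument (given informally in Section~\ref{sec:overview finding vmid} and formally in the analysis of Step~2): Stage~1 fixes the challenges $\newch(x_{v_i(x,y)}, y_{\qobs(x,y)})$ for $i > \imid(T_X)$ to constants on low-deficiency subsources and invokes the first bullet of the challenge-response theorem, and Stage~2 exhibits robust entropy in the $\vvmid(T_X)$ challenge via the row $\bext(y_{\uutop(T_Y)}, x_{\vvmid(T_X)})$ and invokes the second bullet. Stage~1 is sound; the paper organizes it a bit differently (it fixes $\qobs$ as a deterministic function of $X_\alpha$, then shows each challenge is a deterministic function of $Y_\beta$ and fixes it in $Y$), but your variant of first fixing both observed path suffixes as constants costs only $O(\log n)$ extra deficiency and reaches the same place. (Two small parameter slips: the $\newch$ rows have $\ell'$ bits, so the fixing step costs $O(\ell'\log^2 n)$ deficiency, not $O(\ell\log^2 n)$, and the $\vvmid(T_X)$ challenge can only be close to having min-entropy of order $\ell'$, not $k$; neither affects the structure of the argument.)

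Stage~2, however, has a genuine gap. To show that $\bext(y_{\uutop(T_Y)}, x_{\vvmid(T_X)})$ is close to uniform you need $(Y_\beta)_{\uutop(T_Y)}$ to remain a block-source after the conditioning $Y_{\leftson(\uumid(T_Y))} = \beta$, and you justify this by appealing to ``the analog of Lemma~\ref{lem:subsource of block source overview}'' on the grounds that only a substring of $\lefty(Y_{\uutop(T_Y)})$ is fixed. That lemma concerns \emph{low-deficiency} subsources, but conditioning on $Y_{\leftson(\uumid(T_Y))} = \beta$ is not low-deficiency: $|\beta| = n/2^{\dep(\leftson(\uumid(T_Y)))}$ bits, potentially of order $n$ and thus vastly larger than $\sqrt{k}$. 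Moreover the intuition that fixing a substring of a block's left half only ``modestly'' reduces the left-entropy is simply false in general --- a priori, all of the left-entropy of $Y_{\uutop(T_Y)}$ could be concentrated in exactly the coordinates of $\leftson(\uumid(T_Y))$, in which case the conditioning would kill it entirely. What actually rescues the argument, and what the paper uses, is the label $\lbl(\uumid(T_Y)) = \bmid$: $Y_{\uumid(T_Y)}$ is (close to) a block-source, so after fixing its left half to $\beta$ the residual $(Y_\beta)_{\uumid(T_Y)}$ still has min-entropy $\Omega(k^{1/4})$, and since $\uumid(T_Y)$ is a descendant of $\leftson(\uutop(T_Y))$ this residual entropy sits inside $\lefty((Y_\beta)_{\uutop(T_Y)})$. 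This is exactly where the ``double-block-source'' structure of $Y$ is used; without invoking it the claim that $(Y_\beta)_{\uutop(T_Y)}$ remains a block-source does not follow. Notice that you \emph{do} apply this very principle on the $X$-side when arguing $X_{\vvmid(T_X)}$ retains right-half entropy after fixing $\lefty(X_{\vvmid(T_X)}) = \alpha$ --- the same reasoning must be applied one level up on the $Y$-side.
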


Note that by the way we defined $\obsvvmid(x,y)$, Proposition~\ref{prop:heart overview} yields that $\obsvvmid(x,y) = \vvmid(T_X)$ with high probability over $(x,y) \sim (X_{\alpha,\beta}, Y_{\alpha,\beta})$. In particular, this gives us an algorithm for computing $\vvmid(T_X)$ -- simply go up the computed path $\pobs(x,y)$ until a node $v$ is found for which Equation~\eqref{eq:meshi ohelet} holds. In the rest of this section we prove Proposition~\ref{prop:heart overview}.

\subsubsection*{The challenges of descendants of $\vvmid(T_X)$ on $\pobs(x,y)$ are properly responded}

Proposition~\ref{prop:heart overview} has two parts. First, it states that the node-path challenges associated with nodes below $\vvmid(T_X)$ on the path $\pobs(x,y)$ are responded with high probability over the samples from some low-deficiency subsources of $X_\alpha,Y_\beta$. Second, Proposition~\ref{prop:heart overview} argues that the node-path challenge associated with $\vvmid(T_X)$ is unresponded with high probability over the samples.

Lets first consider the nodes below $\vvmid(T_X)$ on $\pobs(x,y)$. Naturally, we want to use the challenge-response mechanism. For that we must find low-deficiency subsources $X'_\alpha \subset X_\alpha$, $Y'_\beta \subset Y_\beta$ such that for all $i > \imid(T_X)$, the challenge
\begin{equation}\label{eq:newch overview}
\newch\left(
(X'_{\alpha})_{v_i(X'_{\alpha}, Y'_{\beta})}, (Y'_{\beta})_{\qobs(X'_\alpha,Y'_\beta)}\right)
\end{equation}
is fixed to a constant. As was done in the analysis of the toy algorithm from Section~\ref{sec:overview playing with cr}, to this end it is enough that the random variable
$$
\newch\left(
(X'_{\alpha})_{v_i(X'_{\alpha}, Y_{\beta})}, (Y_{\beta})_{\qobs(X'_\alpha,Y_\beta)}\right)
$$
is a deterministic function of $Y_\beta$. Indeed, in such case and since the challenge consists of a relatively small number of bits, we can apply Fact~\ref{fact:def and fixing overview} to find a low-deficiency subsource $Y'_{\beta} \subset Y_{\beta}$ such that Equation~\eqref{eq:newch overview} is fixed to a constant.

For $i > \imid(T_X)$, our starting point is the random variable
$$
\newch\left(
(X_{\alpha})_{v_i(X_{\alpha}, Y_{\beta})}, (Y_{\beta})_{\qobs(X_\alpha,Y_\beta)}\right).
$$
To make sure that this random variable depends solely on $Y_\beta$, we need to show that the dependence in all three syntactical appearances of $X_{\alpha}$ can be removed. We start with $\qobs(X_\alpha, Y_\beta)$.

\begin{claim}\label{claim:best claim ever}
There exists a deficiency $\log{n}$ subsource $X'_{\alpha}$ of $X_{\alpha}$ such that $\qobs(X'_\alpha, Y_\beta)$ is fixed to a constant.
\end{claim}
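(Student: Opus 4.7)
The plan is to apply Fact~\ref{fact:def and fixing overview} to a suitable function capturing the dependence of $\qobs$ on $X_\alpha$. The key observation is that $\qobs(x,y)$ is by definition a root-to-leaf path in the tree $T_Y$, which has depth $\log n$. Hence the range of $\qobs$ has size at most $n = 2^{\log n}$, so a single path can be encoded by a bit-string of length $\log n$.

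First, I will define a function $f \colon \sup(X_\alpha) \to \{0,1\}^{\log n}$ that tags each $x$ with an encoding of the path $\qobs(x,y)$, relying on the fact that for the relevant $x$'s, the value $\qobs(x,y)$ does not depend on $y$ over $\sup(Y_\beta)$. An application of Fact~\ref{fact:def and fixing overview} to $f$ with $\ell = \log n$ will then produce a deficiency-$\log n$ subsource $X'_\alpha \subset X_\alpha$ on which $f$ is constantly some $q^\ast$, yielding $\qobs(X'_\alpha, Y_\beta) \equiv q^\ast$ as required.

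Second, to justify that $f$ is well defined (i.e.\ that $\qobs(x,\cdot)$ is constant on $\sup(Y_\beta)$), I will leverage what has been established at this stage of the argument. The entropy-paths of $T_X$ and $T_Y$ have been correctly identified on the present subsources, so the initial segment of $\qobs(x,y)$ up through $v_{\bs}(T_Y) = \uutop(T_Y)$ is pinned down and independent of $y$. For the tail of $\qobs(x,y)$ below $v_{\bs}(T_Y)$, I will appeal to the structure of the entropy-path-finder of~\cite{brsw06}: each step down the tail is determined by a challenge-response evaluation whose outcome is a deterministic function of $x$ once the $y$-substrings above $\uutop(T_Y)$ are fixed, which they effectively are on the current $Y_\beta$-subsource.

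The main obstacle will be precisely this last step: showing that the $\log n$-bit freedom in $\qobs$ can be charged entirely to $X_\alpha$, with no additional restriction of $Y_\beta$. This demands a careful inspection of the iterative algorithm computing $\qobs$, using the already-fixed structure on the $Y$-side to ensure that every tail decision can be written as a deterministic function of $x$ alone, which is what lets the encoding $f$ above be well-defined and then enables the direct appeal to Fact~\ref{fact:def and fixing overview}.
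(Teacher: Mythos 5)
Your one-shot fixing plan — show that $\qobs$, restricted to $\sup(X_\alpha)\times\sup(Y_\beta)$, is a deterministic function of $x$ alone, encode the resulting path in $\log n$ bits, and invoke Fact~\ref{fact:def and fixing} a single time — is a valid repackaging of the paper's argument, which instead fixes one descent decision at a time and accumulates deficiency $\log n$ over at most $\log n$ iterations. Both routes give the stated deficiency bound.

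However, the justification you give for the step you yourself flag as the main obstacle is incorrect. You assert that the tail decisions become deterministic in $x$ ``once the $y$-substrings above $\uutop(T_Y)$ are fixed, which they effectively are on the current $Y_\beta$-subsource,'' and you identify $v_{\bs}(T_Y)$ with $\uutop(T_Y)$. Neither is right. The entropy-path of $T_Y$ terminates at $\uubot(T_Y)$, so the prefix pinned down by Claim~\ref{claim:find paths} runs through $\uubot(T_Y)$, not merely $\uutop(T_Y)$, and the tail to analyze begins at $\uubot(T_Y)$. More importantly, the $y$-substrings at $\uutop(T_Y)$ and its ancestors are not fixed on $Y_\beta$ — indeed $(Y_\beta)_{\uutop(T_Y)}$ must retain block-source structure for Claim~\ref{claim:at imid} to go through. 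What $Y_\beta$ actually fixes is $(\yfi)_{\leftson(\uumid(T_Y))}=\beta$, and the argument succeeds only because $\uubot(T_Y)$ is, by the rules of an entropy-tree, a descendant of $\leftson(\uumid(T_Y))$: consequently every $y$-substring that feeds a challenge-response evaluation along the tail of $\qobs$ (at $\uubot(T_Y)$, at $\leftson(\uubot(T_Y))$, and at each subsequent favored node and its left son) lies inside the fixed block indexed by $\leftson(\uumid(T_Y))$. Without that observation you cannot conclude that the encoding $f$ you wish to fix agrees with $\qobs$ on the good event, so the appeal to Fact~\ref{fact:def and fixing} is not yet licensed.
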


\begin{proof}
Let $\ibot(T_Y)$ denote the depth of $\uubot(T_Y)$. To prove the claim, we first recall that the path $\qobs(X_{\alpha},Y_{\beta})$ contains the entropy-path of $T_Y$. In particular, we have that the nodes $u_0(X_{\alpha},Y_{\beta}), \ldots, u_{\ibot(T_Y)}(X_{\alpha},Y_{\beta})$ are fixed. It is left to argue that there is a low-deficiency subsource $X'_\alpha \subset X_{\alpha}$ such that the remaining nodes $u_{\ibot(T_Y)+1}(X'_{\alpha}, Y_{\beta}), \ldots, u_{\log(n)-1}(X'_{\alpha}, Y_{\beta})$ are fixed as well.

Let us first consider the node $u_{\ibot(T_Y)+1}(X_{\alpha},Y_{\beta})$ that is the son of $u_{\ibot(T_Y)}(X_{\alpha},Y_{\beta}) = \uubot(T_Y)$. According to Equation~\eqref{eq:who is my favored son really}, node $\uubot(T_Y)$ decides which of its two sons will be on $\qobs(X_\alpha,Y_\beta)$ according to whether or not
\begin{equation}\label{eq:where the magic begins}
\resp\left( (Y_\beta)_{\uubot(T_Y)}, X_\alpha, \ch((Y_\beta)_{\leftson(\uubot(T_Y))},X_\alpha) \right) = \hasent.
\end{equation}
By the definition of an entropy-tree, $\uubot(T_Y)$ is a descendant of $\leftson(\uumid(T_Y))$. Further, by definition, $(Y_\beta)_{\leftson(\uumid(T_Y))}$ is fixed to $\beta$. Thus, also $(Y_\beta)_{\uubot(T_Y)}$ and $(Y_\beta)_{\leftson(\uubot(T_Y))}$ are fixed to some constants. Therefore, the Boolean expression in Equation~\eqref{eq:where the magic begins} is a deterministic function of $X_\alpha$. By applying Fact~\ref{fact:def and fixing overview}, we obtain a deficiency $1$ subsource $X'$ of $X_\alpha$ such that the Boolean expression in Equation~\eqref{eq:where the magic begins} is fixed. In particular, $u_{\ibot(T_Y)+1}(X',Y_{\beta})$ is fixed to a constant.

At this point we can apply the same argument to $\ibot(T_Y)+2$. Indeed, $u_{\ibot(T_Y)+1}(X',Y_{\beta})$ is fixed to a constant and all appearances of $Y_\beta$ in the Boolean expression that is analogous to Equation~\eqref{eq:where the magic begins} are again fixed to constants for the same reason as before. Since this process terminates after at most $\log{n}$ steps and since in each iteration we move to a deficiency $1$ subsource of the previous obtained subsource, the claim follows.
\end{proof}

Given Claim~\ref{claim:best claim ever}, we turn to show that for any $i > \imid(T_X)$,
\begin{equation}\label{eq:the newch overview}
\newch\left((X'_{\alpha})_{v_i(X'_{\alpha}, Y_{\beta})}, (Y_{\beta})_{\qobs(X'_{\alpha},Y_\beta)}\right)
\end{equation}
is a deterministic function of $Y_\beta$. By the discussion above, this will prove the first part of Proposition~\ref{prop:heart overview}.

By Claim~\ref{claim:best claim ever}, we already know that $\qobs(X'_\alpha, Y_\beta)$ is fixed to a constant. Thus, it suffices to show that $(X'_{\alpha})_{v_i(X'_{\alpha}, Y_{\beta})}$ is a deterministic function of $Y_\beta$ for all $i > \imid(T_X)$. By an argument similar to the one used in the proof of Claim~\ref{claim:best claim ever}, one can show that for any such $i$, $v_i(X'_\alpha,Y_\beta)$ is a deterministic function of $Y_\beta$. Note further that, by the definition of an entropy-tree, since $i > \imid(T_X)$ we have that $v_i(X'_\alpha,Y_\beta)$ is always (that is, for every $(x,y) \in \sup((X'_\alpha,Y_\beta))$) a descendant of $\leftson(\vvmid(T_X))$. Since $(X'_\alpha)_{\leftson(\vvmid(T_X))}$ is fixed to a constant we conclude that $(X'_{\alpha})_{v_i(X'_{\alpha},Y_{\beta})}$ is a deterministic function of $Y_\beta$.

By the discussion above, we are now in a position to apply Fact~\ref{fact:def and fixing overview} so to obtain a low-deficiency subsource $Y'_{\beta} \subset Y_{\beta}$ such that
$$
\newch\left((X'_{\alpha})_{v_i(X'_{\alpha}, Y'_{\beta})}, (Y'_{\beta})_{\qobs(X'_{\alpha},Y'_\beta)}\right)
$$
is fixed to a constant. We can then apply the challenge-response mechanism to show that there exist low-deficiency subsources $X_{\alpha,\beta} \subset X'_\alpha$, $Y_{\alpha,\beta} \subset Y'_\beta$ such that for any $(x,y) \in \sup((X_{\alpha,\beta}, Y_{\alpha,\beta}))$ it holds that
$$
\forall i > \imid(T_X) \quad \resp\left(x,y,\newch\left(x_{v_i(x,y)},y_{\qobs(x,y)}\right)\right) = \fixed.
$$

\subsubsection*{The challenge of $\vvmid(T_X)$ is unresponded}

To prove Proposition~\ref{prop:heart overview}, it is left to show that the node-path challenge associated with $\vvmid(T_X)$ is unresponded. More precisely, it suffices to show that with high probability over $(x,y) \sim (X_{\alpha,\beta}, Y_{\alpha,\beta})$, it holds that
$$
\resp\left(x,y,\newch\left(x_{\vvmid(T_X)},y_{\qobs(x,y)}\right)\right) = \hasent.
$$
Since $\uutop(T_Y)$ is on the path $\qobs(X_{\alpha,\beta},Y_{\alpha,\beta})$, the matrix
$$
\newch\left((X_{\alpha,\beta})_{\vvmid(T_X)},(Y_{\alpha,\beta})_{\qobs(X_{\alpha,\beta}, Y_{\alpha,\beta})}\right)
$$
contains the row
\begin{equation}\label{eq:bext rules}
\bext\left((Y_{\alpha,\beta})_{\uutop(T_Y)}, (X_{\alpha,\beta})_{\vvmid(T_X)}\right).
\end{equation}
Since $X_{\vvmid(T_X)}$ is a block-source, $(X_\alpha)_{\vvmid(T_X)}$ has a significant amount of entropy. Indeed, $X_\alpha$ is obtained from $X$ by fixing $X_{\leftson(\vvmid(T_X))} = \lefty(X_{\vvmid(T_X)})$. Therefore, by Fact~\ref{fact:def and fixing overview}, $(X_{\alpha,\beta})_{\vvmid(T_X)}$ also has a significant amount of entropy.

We now observe that $(Y_{\alpha,\beta})_{\uutop(T_Y)}$ is a block-source. Indeed, $Y_{\uutop(T_Y)}$ is a block-source and $Y_\beta$ is obtained from $Y$ by fixing $Y_{\leftson(\uumid(T_Y))}$. Since $Y_{\uumid(T_Y)}$ is a block-source, this fixing leaves some entropy in $(Y_\beta)_{\uumid(T_Y)}$. Recall further that $(Y_\beta)_{\uumid(T_Y)}$ lies inside $\lefty((Y_\beta)_{\uutop(T_Y)})$ as $\uumid(T_Y)$ is a descendant of $\leftson(\uutop(T_Y))$. Thus, we see that $(Y_{\alpha,\beta})_{\uutop(T_Y)}$ is a block-source.

Consider now any low-deficiency subsources $\hat{X} \subset X_{\alpha,\beta}$, $\hat{Y} \subset Y_{\alpha,\beta}$. By Fact~\ref{fact:def and fixing overview} and by Lemma~\ref{lem:subsource of block source overview} we have that $\hat{X}_{\vvmid(T_X)}$ has a significant amount of entropy and that $\hat{Y}_{\uutop(T_Y)}$ is a block-source (with some deterioration in parameters). Thus, for any low-deficiency subsources $\hat{X},\hat{Y}$ of $X_{\alpha,\beta}$, $Y_{\alpha,\beta}$, respectively, we have that the challenge matrix associated with $\vvmid(T_X)$ contains a row that is close to uniform. In particular this matrix is close to having high entropy. Thus, by the challenge-response mechanism, we have that the node-path challenge associated with $\vvmid(T_X)$ is unresponded with high probability over $(x,y) \sim (X_{\alpha,\beta}, Y_{\alpha,\beta})$, as desired.

\subsection{Determining the output}\label{sec:overview output}

At the last step of the algorithm we compute the output of the sub-extractor. The output of the sub-extractor is defined as
$$
\subext(x,y) = \bext\left(x_{\obsvvmid(x,y)} \circ x , y\right),
$$
where by $x_{\obsvvmid(x,y)} \circ x$ we denote the block-source with first block $x_{\obsvvmid(x,y)}$ and second block that equals $x$. Technically, we need to append the first block with zeros so that both blocks will have the same length, and also append $y$ with zeros.

There are two potential problems with applying $\bext$ the way we do above. First, we see that the block-source fed to $\bext$ depends on the sample $y$, which is problematic since $y$ is used as a sample from the weak-source as well. This, however, is a non-issue. Indeed, recall that with high probability over $(x,y) \sim (X_{\alpha,\beta}, Y_{\alpha,\beta})$ it holds that $\obsvvmid(x,y) = \vvmid(T_X)$, and so ignoring a small error, the computation of the extractor $\bext$ above is the same as
$$
\bext\left(x_{\vvmid(T_X)} \circ x, y\right).
$$
Now that we have shown that there are no dependencies between the two samples fed to $\bext$, we only need to make sure that the first sample is indeed coming from a block-source when sampling $(x,y) \sim (X_{\alpha,\beta}, Y_{\alpha,\beta})$.

Too see why this is true, recall that $\vvmid(T_X)$ is a descendant of $\leftson(\vvtop(T_X))$ and that $X_{\vvtop(T_X)}$ is a block-source. Since $X_{\alpha,\beta}$ is obtained from $X$ by fixing $Xֹ_{\leftson(\vvmid(T_X))}$ (and by moving to low-deficiency subsources) and since $X_{\vvmid(T_X)}$ is a block-source, we have that $(X_{\alpha,\beta})_{\vvtop(T_X)}$ is also a block-source. Therefore, $(X_{\alpha,\beta})_{\vvmid(T_X)} \circ X_{\alpha,\beta}$ is also a block-source. This shows that the application of $\bext$ above is valid, and that the output is close to uniform with high probability over $(X_{\alpha,\beta}, Y_{\alpha,\beta})$.

\section{Preliminaries}\label{sec:prelim}

\subsection{Standard (and less standard) notations and definitions}\label{sec:standard notations}

The logarithm in this paper is always taken base $2$. For every natural number $n \ge 1$, define $[n] = \{1,2,\ldots,n\}$.

\paragraph{Strings and matrices.} Let $n$ be an integer that is a power of $2$, namely $n = 2^m$ for some non-negative integer $m$. Let $x \in \{0,1\}^n$. For $i \in [n]$, we let $x_i$ denote the $i$'th bit of $x$. For $\emptyset \neq I \subseteq [n]$, we let $X_I$ denote the projection of $X$ to the coordinates in $I$. We denote by $\lefty(x)$ the $n/2$ leftmost bits of $x$ and by $\righty(x)$ the $n/2$ rightmost bits of $x$. That is, $\lefty(x) = x_1 \cdots x_{n/2}$ and $\righty(x) = x_{(n/2)+1} \cdots x_n$. We denote the concatenation of two strings $x,y$ by $x \circ y$. Given an $r \times n$ matrix $x$, for $i = 0,1,\ldots,r-1$, we let $x_i$ denote row $i$ of $x$.

\paragraph{Trees.} Let $T$ be a complete rooted binary tree. We denote the root of $T$ by $\root(T)$. Throughout the paper we consider trees where some of the nodes are labeled by labels from a ground set $L$. If $v$ is a labeled node in a tree $T$, we denote its label by $\lbl(v)$. If $v$ is a non-leaf in $T$, we denote the left and right sons of $v$ by $\leftson(v), \rightson(v)$, respectively. If $v$ is not the root of $T$, $\parent(v)$ denotes the parent of $v$. The depth of $T$ is denoted by $\dep(T)$. The depth of a node $v$ in $T$, denoted by $\dep(v)$, is the distance in edges from $\root(T)$ to $v$. Note that $\dep(\root(T)) = 0$.

\paragraph{Random variables and distributions.}
We sometimes abuse notation and syntactically treat random variables and their distribution as equal. Let $X,Y$ be two random variables. We say that $Y$ is a \emph{deterministic function of $X$} if the value of $X$ determines the value of $Y$. Namely, there exists a function $f$ such that $Y = f(X)$.

\paragraph{Associating strings with trees.}
Let $n$ be a power of $2$ and let $x \in \{0,1\}^n$. The tree that is associated with $x$, denoted by $T_x$, is a depth $\log{n}$ complete rooted binary tree, where with each node $v$ of $T_x$ we associate a substring $x_v$ of $x$ as follows:
\begin{itemize}
  \item $x_{\root(T)} = x$.
  \item For $v \neq \root(T)$, if $v$ is the left son of its parent, then $x_v = \lefty(x_{\parent(v)})$; otherwise, $x_v = \righty(x_{\parent(v)}$).
\end{itemize}

\paragraph{Statistical distance.}
The \emph{statistical distance} between two distributions $X,Y$ on a common domain $D$ is defined by
$$
\SD\left(X,Y\right) = \max_{A \subseteq D}\left\{\left|\, \Pr[X \in A] - \Pr[Y \in A]\,\right| \right\}.
$$
If $\SD(X,Y) \le \eps$ we say that $X$ is $\eps$-close to $Y$.

\paragraph{Min-entropy.}
The \emph{min-entropy} of a random variable $X$ is defined by
$$
\me(X) = \min_{x \in \sup(X)}{\log_2\left(\frac1{\Pr[X=x]}\right)}.
$$
If $X$ is supported on $\{0,1\}^n$, we define the \emph{min-entropy rate} of $X$ by $\me(X)/n$. In such case, if $X$ has min-entropy $k$ or more, we say that $X$ is an $(n,k)$-weak-source.


\subsection{Li's block-source--weak-source extractor}

Let $X$ be a random variable on $n$ bit strings, and assume $n$ is even. We say that $X$ is an \emph{$(n,k)$-block-source} if the following holds:
\begin{itemize}
  \item $\me(\lefty(X)) \ge k$.
  \item For any $x \in \sup(\lefty(X))$ it holds that $\me(\righty(X) \mid \lefty(X) = x) \ge k$.
\end{itemize}
We sometimes omit the length $n$ of $X$ and say that $X$ is a $k$-block-source.

In a recent breakthrough, Li~\cite{Li15} gave a construction of an extractor for two $n$-bit sources, where the first source is a $\polylog(n)$-block-source and the second is a weak-source with min-entropy $\polylog(n)$. Our construction heavily relies on Li's extractor.

\begin{theorem}[\cite{Li15}]\label{thm:li extractor}
There exists a universal constant $\gamma > 0$ such that the following holds. For all integers $n,k$ with $k \ge \log^{12}{n}$, there is a $\poly(n)$-time computable function
$$
\bext \colon \{0,1\}^{n} \times \{0,1\}^n \to \{0,1\}^m
$$
such that if $X$ is a $k$-block-source, where each block is on $n/2$ bits, and $Y$ is an independent $(n,k)$-source, then
$$
\SD\left( (\bext(X,Y),Y), (U_m, Y) \right) \le \eps,
$$
and
$$
\SD\left( (\bext(X,Y),X), (U_m, X) \right) \le \eps,
$$
where $m = 0.9k$ and $\eps = 2^{-k^{\gamma}}$.
\end{theorem}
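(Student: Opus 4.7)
The plan is to exploit the block structure of $X = (\lefty(X), \righty(X))$ via an alternating-extraction scheme against $Y$. First I would use $\lefty(X)$ together with $Y$ to produce a somewhere-random source with few rows, at least one of which is close to uniform. Concretely, apply a strong seeded extractor to $Y$ with all seeds of length $O(\log n)$, producing a matrix $W$ with $\poly(n)$ rows, a $1 - \eps$ fraction of which are $\eps$-close to uniform conditioned on $Y$. Then use $\lefty(X)$ (via a sampler or weak condenser applied to the row indices) to select an $O(\log n)$-row sub-matrix of $W$, so that conditioned on a typical fixing of $\lefty(X)$, the resulting sub-matrix still contains at least one row that is $\eps$-close to uniform given $\lefty(X)$.

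The main step is to reduce this somewhere-random matrix to a single extracted string, using $\righty(X)$, which retains min-entropy $k$ conditioned on $\lefty(X)$ by the block-source property. I would apply Li's non-malleable-extractor / correlation-breaker framework: for each row $W_i$, run alternating extraction between $W_i$ and $\righty(X)$, using the row index $i$ (or a short hash of it) as advice. After $O(\log\log n)$ rounds, the output from the good row becomes close to uniform and independent of the outputs from the bad rows, so a merger or bit-wise XOR of the per-row outputs yields a final $m = 0.9k$-bit string that is $\eps$-close to uniform. To obtain the strong-extractor guarantee with respect to $Y$ (and separately with respect to $X$), the last stages of the alternating extraction should be arranged so that the final output depends only on one of the two sources, so that conditioning on the other source does not spoil closeness to uniform.

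I expect the hardest part to be tracking entropy loss across all rounds so that the total entropy consumed in $\righty(X)$ and in $Y$ stays well below $k$ while the final error is still $2^{-k^{\Omega(1)}}$. Each round of alternating extraction costs a correlation-breaker seed plus the advice length, and with only $k = \log^{12} n$ entropy to budget, every $\polylog$ factor matters; in particular, the $O(\log\log n)$ rounds, together with the fixing of $\lefty(X)$ at the outset and the $O(\log n)$-bit row indices used as advice, must all fit inside the entropy budget. Building a non-malleable extractor (or correlation breaker) for polylogarithmic entropy with polylogarithmic seed length, so that everything fits, is precisely the breakthrough of \cite{Li15}; I would rely on that object as a black box, or reprove it by iterating Bourgain-style inner-product extractors composed with a flip-flop-type correlation breaker until both the seed length and the advice length are $\polylog(n)$.
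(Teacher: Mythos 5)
This theorem is not proved in the paper: it is imported from Li's work~\cite{Li15}, as the citation in the theorem header indicates, and the paper treats $\bext$ as a black box throughout. There is therefore no proof of record in this paper to compare your sketch against.

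Your outline does capture the broad shape of Li's construction (a somewhere-random reduction driven by $\lefty(X)$ and $Y$, followed by an alternating-extraction/correlation-breaker phase powered by $\righty(X)$), but two points deserve flagging. First, as a proof of the stated theorem it is not self-contained: you ultimately fall back on exactly the non-malleable-extractor and correlation-breaker machinery whose existence at polylogarithmic entropy is the content of \cite{Li15}, so invoking it as a black box makes the argument circular, and carrying it out in full is the entire technical difficulty of that paper. Second, the claim that arranging ``the final output [to depend] only on one of the two sources'' yields the two-sided strong guarantee is incorrect. If $\bext(X,Y)$ were (close to) a deterministic function of $X$ alone, then conditioning on $X$ would essentially fix the output, and the second bound $\SD\left((\bext(X,Y),X),(U_m,X)\right) \le \eps$ could not hold; symmetrically for $Y$. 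The two-sided strong property in Li's theorem comes from a more careful accounting in which both sources retain substantial residual entropy at the final extraction step, not from collapsing the dependence onto one side. If you intend to reprove the theorem rather than cite it, that entropy bookkeeping, together with making the seed and advice lengths genuinely $\polylog(n)$, is where the real work lies.
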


Let $t<n$ be even integers. We sometimes apply $\bext$ on strings $x \in \{0,1\}^t$ and $y \in \{0,1\}^n$ and write $\bext(x,y)$. Formally, we actually compute $\bext(x',y)$ where $x'$ is obtained by appending $(n-t)/2$ zeros before and after $x$. This way of padding $x$ preserves the block-structure of $x$.

\subsection{Subsources}

The notion of a subsource was first explicitly introduced and studied by Barak~\etal~\cite{BKSSW10}. We start by giving the definition of a subsource and then collect some facts about subsources.

\begin{definition}[Subsource]
Given random variables $X$ and $X'$ on $\{0,1\}^n$, we say that $X'$ is a \emph{deficiency $d$} subsource of $X$ and write $X' \subset X$ if there exists a set $A \subseteq \{0,1\}^n$ such that $(X \mid A) = X'$ and $\pr[X \in A] \ge 2^{-d}$. More precisely, for every $a \in A$, $\pr[X' = a]$ is defined by $\pr[X = a \mid X \in A]$ and for $a \not\in A$, $\pr[X' = a] = 0$.
\end{definition}

\begin{fact}[\cite{brsw06}, Fact 3.11]\label{fact:def and entropy}
If $X$ is an $(n,k)$-source and $X'$ is a deficiency $d$ subsource of $X$ then $X'$ is an $(n,k-d)$-source.
\end{fact}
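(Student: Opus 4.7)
The plan is to unfold both definitions and carry out a one-line probability calculation. Let $A \subseteq \{0,1\}^n$ be the set witnessing that $X'$ is a deficiency $d$ subsource of $X$, so $X' = (X \mid X \in A)$ and $\pr[X \in A] \ge 2^{-d}$. The goal is to show $\pr[X' = x] \le 2^{-(k-d)}$ for every $x \in \sup(X')$, which is exactly the statement $\me(X') \ge k - d$.

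First I would note that $\sup(X') \subseteq A$, so for any $x \in \sup(X')$ the conditional probability formula gives
\[
\pr[X' = x] \;=\; \pr[X = x \mid X \in A] \;=\; \frac{\pr[X = x]}{\pr[X \in A]}.
\]
Next, I would bound the numerator using the min-entropy assumption on $X$, namely $\pr[X = x] \le 2^{-k}$, and bound the denominator from below using the deficiency assumption, namely $\pr[X \in A] \ge 2^{-d}$. Combining these two bounds yields $\pr[X' = x] \le 2^{-k}/2^{-d} = 2^{-(k-d)}$, which is the desired inequality for every $x$ in the support of $X'$.

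There is no real obstacle here; the fact is essentially a restatement of how conditioning on an event of probability at least $2^{-d}$ can increase the probability of any particular outcome by at most a factor of $2^{d}$. The only small care needed is to observe that points outside $A$ have $\pr[X' = x] = 0$ and contribute nothing to the $\min$ in the definition of $\me(X')$, so the supremum of $\pr[X' = x]$ is indeed attained on $A$ and bounded as above.
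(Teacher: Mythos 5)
Your proof is correct and is the standard one-line argument: conditioning on an event of probability at least $2^{-d}$ can inflate any point probability by at most a factor $2^{d}$, so $\pr[X'=x]=\pr[X=x]/\pr[X\in A]\le 2^{-k}/2^{-d}=2^{-(k-d)}$. The paper itself does not reprove this fact but simply cites it from~\cite{brsw06} (Fact 3.11), and your argument matches the proof given there.
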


\begin{fact}[\cite{brsw06}, Fact 3.13]\label{fact:def and fixing}
Let $X$ be a random variable on $n$-bit strings. Let $f \colon \{0,1\}^n \to \{0,1\}^m$ be a function. Then, there exists $a \in \{0,1\}^m$ and a deficiency $m$ subsource $X'$ of $X$ such that $f(x) = a$ for every $x \in \sup(X')$.
\end{fact}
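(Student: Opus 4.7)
The plan is to prove Fact~\ref{fact:def and fixing} by a direct pigeonhole (averaging) argument on the distribution of $f(X)$. Since $f$ takes values in $\{0,1\}^m$, the image of $f$ has size at most $2^m$, and so by an averaging argument there must exist at least one value $a \in \{0,1\}^m$ such that
\[
\Pr[f(X) = a] \;\ge\; 2^{-m}.
\]
The value $a$ is simply chosen to be a most-popular (or any above-average) preimage value in $\sup(f(X))$.

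Given such an $a$, I would define the subsource $X'$ by conditioning: let $A = f^{-1}(a) \subseteq \{0,1\}^n$ and set $X' = (X \mid X \in A)$. By the inequality above, $\Pr[X \in A] \ge 2^{-m}$, which is exactly the condition in the definition of a deficiency~$m$ subsource, so $X' \subset X$ with deficiency at most $m$. Moreover, by construction $\sup(X') \subseteq A = f^{-1}(a)$, so $f(x) = a$ for every $x \in \sup(X')$, as required.

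There is no real obstacle here — the only subtlety is merely to confirm that the definition of deficiency subsource matches the event we are conditioning on (which it does verbatim). The argument is a one-line averaging plus a one-line application of the subsource definition, and it matches the flavor of how the fact is used throughout the paper: fixing a short output of some function costs at most its output length in deficiency.
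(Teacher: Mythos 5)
Your proof is correct and is exactly the standard averaging argument for this fact; the paper itself cites it from \cite{brsw06} without reproving it, and your derivation matches the canonical proof there. Nothing is missing: the pigeonhole step gives some $a$ with $\Pr[f(X)=a]\ge 2^{-m}$, and conditioning on $f^{-1}(a)$ is verbatim the definition of a deficiency-$m$ subsource.
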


\begin{lemma}[\cite{brsw06}, Corollary 3.19]\label{lem:subsource of block source}
Let $X$ be a $k$-block-source, and let $X'$ be a deficiency $d$ subsource of $X$. Then, $X'$ is $\eps$-close to being a $k-d-\log(1/\eps)-1$ block-source.
\end{lemma}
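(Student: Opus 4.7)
The plan is to modify $X'$ on a low-probability event to obtain a distribution $X''$ that is genuinely a block-source, and then to bound the statistical distance by $\eps$.

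First I would handle the left block. Since conditioning on $X \in A$ (where $\pr[X \in A] \ge 2^{-d}$) inflates probabilities by at most a factor of $2^d$, I get directly that
\[
\pr[\lefty(X') = \alpha] = \frac{\pr[\lefty(X)=\alpha \wedge X \in A]}{\pr[X \in A]} \le 2^d \cdot \pr[\lefty(X) = \alpha] \le 2^{d-k},
\]
so $\me(\lefty(X')) \ge k - d$.

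The delicate part is the right block. For $\alpha \in \sup(\lefty(X'))$, let
\[
p_\alpha = \pr[X \in A \mid \lefty(X) = \alpha],
\]
which measures how much mass of $X$ conditioned on $\lefty(X) = \alpha$ survives the restriction to $A$. When $p_\alpha$ is at least some threshold $\delta$, the distribution of $\righty(X') \mid \lefty(X') = \alpha$ is a deficiency $\log(1/\delta)$ subsource of $\righty(X) \mid \lefty(X) = \alpha$, so by Fact~\ref{fact:def and entropy} its min-entropy is at least $k - \log(1/\delta)$. Call an $\alpha$ \emph{bad} if $p_\alpha < \delta$; I will set $\delta = \eps \cdot 2^{-d}$.

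Next I would bound the probability under $X'$ of landing on a bad $\alpha$. Since $\pr[X \in A \wedge \lefty(X) \text{ bad}] \le \delta \cdot \pr[\lefty(X) \text{ bad}] \le \delta$, dividing by $\pr[X \in A] \ge 2^{-d}$ gives
\[
\pr[\lefty(X') \text{ bad}] \le \delta \cdot 2^d = \eps.
\]
Now I would define $X''$ to be $X'$ conditioned on $\lefty(X')$ being not bad. This is a deficiency $\log(1/(1-\eps))$ subsource of $X'$, hence at statistical distance at most $\eps$ from $X'$ (and for $\eps \le 1/2$ the deficiency is at most $1$). Under $X''$: the left block still has min-entropy at least $k - d - 1$; and for any $\alpha \in \sup(\lefty(X''))$, since $\alpha$ is good we have $\me(\righty(X'') \mid \lefty(X'') = \alpha) \ge k - \log(1/\delta) = k - d - \log(1/\eps)$. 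Both bounds are at least $k - d - \log(1/\eps) - 1$, so $X''$ is a $(k - d - \log(1/\eps) - 1)$-block-source, completing the proof.

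I do not expect a serious obstacle here; the main subtlety is just to separate the deficiency incurred on the left block (simple inflation by $2^d$) from the deficiency incurred on the right block (inflation by $2^d$ combined with the $\eps$-fraction of $\alpha$'s on which $A$ is nearly avoided), and to allocate the $\eps$-error entirely to trimming the bad $\alpha$'s so that the remaining distribution has a clean block-source guarantee.
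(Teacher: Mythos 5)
Your proof is correct, and it is the standard argument for this kind of ``conditioning degrades a block-source only slightly'' statement. The paper itself does not prove this lemma (it cites it directly from \cite{brsw06}, Corollary~3.19), but your decomposition into good and bad $\alpha$'s — calling $\alpha$ bad when $p_\alpha = \pr[X\in A \mid \lefty(X)=\alpha] < \eps 2^{-d}$, bounding the bad mass under $X'$ by $\eps$, and trimming it off — is exactly the canonical route. Two things you handled correctly that are worth flagging: (i) the statistical distance between $X'$ and $X'' = X'\mid (\lefty \text{ good})$ is exactly the trimmed mass, hence $\le \eps$, and (ii) you noted that the extra ``$-1$'' in the left-block bound requires $\eps \le 1/2$ (so that $\log\frac{1}{1-\eps}\le 1$), which is the implicit regime for this lemma. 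One small presentational point: you could remark that for good $\alpha$ the conditional right-block distribution under $X''$ is literally the same as under $X'$ (conditioning on ``not bad'' is vacuous once $\lefty=\alpha$ is fixed to a good $\alpha$), so the right-block bound transfers with no further loss; you state this implicitly but it is the one place a reader might pause.
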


\section{The Challenge-Response Mechanism}\label{sec:cr mec}

In this section we further abstract the challenge-response mechanism that was introduced in~\cite{BKSSW10} and refined by~\cite{brsw06}. This abstraction will make it easier for us to apply the mechanism in our proofs. The reader is referred to Section~\ref{sec:cr overview} for an intuitive-level overview of the mechanism.

\begin{theorem}\label{thm:cr}
For integers $\ell < n$, there exists a $\poly(n)$-time computable function
$$
\crm \colon \{0,1\}^n \times \{0,1\}^n \times \{0,1\}^\ell \to \{ \fixed, \hasent \}
$$
with the following property. For any two independent $(n,k)$-sources $X,Y$ with $k > \log^{10}{n}$ and for any function $\ch \colon \{0,1\}^n \times \{0,1\}^n \to \{0,1\}^\ell$, the following holds:
\begin{itemize}
  \item If $\ch(X,Y)$ is fixed to a constant then there exist deficiency $2\ell$ subsources $X' \subset X$, $Y' \subset Y$, such that
      $$
      \pr_{(x,y) \sim (X',Y')}\left[ \crm\left(x,y,\ch(x,y)\right) = \fixed \right] = 1.
      $$
  \item If for any deficiency $20\ell$ subsources $\hat{X} \subset X$, $\hat{Y} \subset Y$ it holds that $\ch(\hat{X},\hat{Y})$ is $\eps$-close to having min-entropy $k$, then
      $$
      \pr_{(x,y) \sim (X,Y)}\left[ \crm\left(x,y,\ch(x,y)\right) = \fixed \right] \le (2^{-k} + \eps) \cdot \poly(n).
      $$
\end{itemize}
\end{theorem}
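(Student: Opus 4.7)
The plan is to construct $\crm$ as a product-style membership test. Partition $c \in \{0,1\}^\ell$ as $(c_1, c_2)$ with $\ell_1 + \ell_2 = \ell$, and let $R(x,y) = \{h_{1,i}(x) \circ h_{2,j}(y) : i, j \in [\poly(n)]\}$ where $h_{1,i}$ and $h_{2,j}$ are strong seeded extractors outputting $\ell_1$ and $\ell_2$ bits respectively, with seeds drawn from designated substrings of $y$ (for $h_{1,i}$) and $x$ (for $h_{2,j}$). Set $\crm(x,y,c) = \fixed$ iff $c \in R(x,y)$. This is $\poly(n)$-time computable since $|R(x,y)| = \poly(n)$.

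For Case~2, independence of $X, Y$ gives, per candidate pair,
\[
\Pr[\crm(X, Y, \ch(X, Y)) = \fixed] = \sum_{r_1, r_2} \Pr[h_1(X) = r_1]\, \Pr[h_2(Y) = r_2]\, \Pr[\ch(X_{r_1}, Y_{r_2}) = (r_1, r_2)],
\]
with $X_{r_1} = X \mid h_1(X) = r_1$ and $Y_{r_2} = Y \mid h_2(Y) = r_2$. Terms where both marginals exceed $2^{-20\ell}$ give subsources of deficiency at most $20\ell$, and the entropy hypothesis bounds the inner factor by $2^{-k} + \eps$, so these sum to at most $2^{-k} + \eps$. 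The tail with either marginal below $2^{-20\ell}$ contributes at most $2 \cdot 2^\ell \cdot 2^{-20\ell} = 2^{-19\ell+1} \le 2^{-k}$, using the implicit $\ell \ge k$ (forced by $\me(\ch) \le \ell$). Union-bounding over the $\poly(n)$ candidate pairs yields the $(2^{-k} + \eps) \cdot \poly(n)$ bound. For Case~1, with $\ch(X, Y) \equiv c^* = (c_1^*, c_2^*)$, strongness combined with averaging over candidates produces indices $i^*, j^*$ with $\Pr[h_{1,i^*}(X) = c_1^*], \Pr[h_{2,j^*}(Y) = c_2^*] \ge 2^{-\ell_i - 1}$. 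Applying Fact~\ref{fact:def and fixing} to fix the relevant seed blocks of $Y$ and $X$ (deficiency $O(\log n)$ each) and then conditioning on $h_{1,i^*}(X) = c_1^*$ and $h_{2,j^*}(Y) = c_2^*$ (additional deficiency $\ell_i + 1$) yields product subsources of total deficiency at most $2\ell$ on each side, on which $\crm$ outputs $\fixed$ with probability $1$.

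The main obstacle is guaranteeing, in Case~1, that the ``good'' seed used by the selected candidate pair actually lies in the support of the corresponding block of $Y$ (and symmetrically for $X$): since $Y$ has min-entropy as low as $\log^{10} n$, its block marginals may be arbitrary and a fixed seed drawn from $\{0,1\}^d$ need not appear at all. This is addressed by letting the list of candidates sample seeds directly from $y$, so that the seed set used by the mechanism automatically coincides with the realized block values of $Y$, and then arguing via a second averaging step---now over candidate indices---that at least one good seed is hit with probability at least $2^{-\ell_1 - O(\log n)}$, sufficient to apply Fact~\ref{fact:def and fixing}. Tracking the deficiencies accumulated across the sequential fixing-and-conditioning steps to stay within the $2\ell$ budget is intricate but elementary once the candidate index has been located.
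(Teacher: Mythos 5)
Your proposal takes a genuinely different route from the paper: the paper proves Theorem~\ref{thm:cr} as a near-immediate reduction to the somewhere-random two-source extractor $\se$ of Barak~\etal (Theorem~\ref{thm:two sources to srs}), defining $\crm$ to output $\fixed$ iff $\ch(x,y)$ appears as a row of $\se(x,y)$, and then invoking the two listed properties of $\se$ verbatim. You instead attempt to build the required somewhere-extractor $R(x,y)$ from scratch as a product of strong seeded extractors $h_{1,i}(x)\circ h_{2,j}(y)$, with seeds sourced from designated blocks of the other input. That is a harder task, and I do not think your construction achieves the hitting property needed for the first bullet.

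Concretely, in Case~1 you need an index pair and deficiency-$2\ell$ subsources on which $h_{1,i^*}(X')=c_1^*$ holds with probability~$1$. Your route is to find $i^*$ with $\Pr[h_{1,i^*}(X)=c_1^*]\ge 2^{-\ell_1-1}$ and then condition. But for this you need $h_{1,i^*}(X)$, with its seed already committed, to place at least $2^{-\ell_1-1}$ mass on every value; that forces the seeded extractor's error to be at most $2^{-\ell_1-1}$. Since $\ell_1\approx\ell/2\ge k/2>\tfrac12\log^{10}n$, a strong extractor with that error needs seed length $\Omega(\ell_1)$, i.e.\ super-polynomially many seeds, so a $\poly(n)$-size candidate list cannot cover them. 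Drawing the seed from a substring of $y$ does not rescue this: that substring has an adversarial distribution under $Y$ (possibly supported on a single bad seed), and your ``second averaging step over candidate indices'' only re-selects which adversarial block of $y$ to read, not which uniform seed to plug in. The closeness-to-uniform of a strong extractor holds on average over a uniform seed, not for the fixed (and possibly constant) seed obtained after conditioning $Y$, so no averaging over the $\poly(n)$ candidate substrings produces the needed $2^{-\ell_1-O(\log n)}$ lower bound. A second, related issue is that once the seed of $h_{1,i}$ comes from $y$, the quantity $h_{1,i}(X)$ is a function of both inputs, so the Case~2 decomposition $\Pr[h_1(X)=r_1]\Pr[h_2(Y)=r_2]\Pr[\ch(X_{r_1},Y_{r_2})=(r_1,r_2)]$ with $X_{r_1}$ a subsource of $X$ alone is no longer valid without first fixing the seed blocks, which reintroduces the same difficulty. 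The object you are implicitly trying to construct is exactly the nontrivial content of Theorem~4.3 of~\cite{brsw06}; the paper's proof is short precisely because it treats that result as a black box.
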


The proof of Theorem~\ref{thm:cr} readily follows from the following theorem.

\begin{theorem}[Theorem 4.3, \cite{brsw06}]\label{thm:two sources to srs}
There exist universal constants $\gamma,c$ such that for any integer $n$, there exists a $\poly(n)$-time computable function
$$
\se \colon \{0,1\}^n \times \{0,1\}^n \to \left( \{0,1\}^\ell \right)^r,
$$
with $\ell = \gamma k$ and $r = n^c$, such that the following holds. For any $(n,k)$-independent sources $X,Y$, with $k > \log^{10}{n}$, it holds that:
\begin{itemize}

  \item Let $a$ be any fixed $\ell$ bit string. Then, there are subsources $X_a \subset_{2\ell} X$, $Y_a \subset_{2\ell} Y$ and an index $i \in [r]$ such that $\pr\left[\se(X_a,Y_a)_i = a\right] = 1$.

  \item Given any particular row index $i \in [r]$, $(X,Y)$ is $2^{-10\ell}$-close to a convex combination of subsources such that for every $(\hat{X}, \hat{Y})$ in the combination,
      \begin{itemize}
        \item $\hat{X}$ is a deficiency $20\ell$ subsource of $X$.
        \item $\hat{Y}$ is a deficiency $20\ell$ subsource of $Y$.
        \item $\hat{X}, \hat{Y}$ are independent.
        \item $\se(\hat{X}, \hat{Y})_i$ is fixed to a constant.
      \end{itemize}
\end{itemize}
\end{theorem}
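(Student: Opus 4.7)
The plan is to construct $\se$ as a composition: a seed producer applied to $y$, followed by a strong seeded extractor applied to $x$. Specifically, let $\mathsf{SR}\colon \{0,1\}^n \to (\{0,1\}^{d})^{r}$ be an explicit $\poly(n)$-time map ensuring that for every $(n,k)$-source $Y$ with $k>\log^{10}n$, some row-index $i^{*}=i^{*}(Y) \in [r]$ makes $\mathsf{SR}(Y)_{i^{*}}$ exponentially close to uniform on $\{0,1\}^{d}$; this can be built by iterating standard seeded extractors over all $O(\log n)$-bit seeds, giving $r=n^{O(1)}$ and $d=O(\log n)$. Let $E\colon\{0,1\}^n\times\{0,1\}^{d}\to\{0,1\}^{\ell}$ be a strong seeded extractor with $\ell=\gamma k$ for a universal constant $\gamma$, and error at most $2^{-\ell-1}$ on $(n,k)$-sources. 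Define
\[
  \se(x,y)_{i} \deff E\bigl(x,\,\mathsf{SR}(y)_{i}\bigr).
\]

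For the reachability bullet, fix $a \in \{0,1\}^{\ell}$ and take $i=i^{*}(Y)$. Because $\mathsf{SR}(Y)_{i^{*}}$ is nearly uniform and $E$ is strong, we get $\E_{s\sim \mathsf{SR}(Y)_{i^*}}\!\bigl[\Pr[E(X,s)=a]\bigr] \ge 2^{-\ell}(1-o(1))$, so there is $s^{*} \in \sup(\mathsf{SR}(Y)_{i^{*}})$ with both $\Pr[\mathsf{SR}(Y)_{i^{*}}=s^{*}] \ge 2^{-d-O(1)}$ and $\Pr[E(X,s^{*})=a] \ge 2^{-\ell-1}$. Setting $Y_{a}=(Y\mid \mathsf{SR}(Y)_{i^{*}}=s^{*})$ and $X_{a}=(X\mid E(X,s^{*})=a)$, the two conditioning events act on disjoint variables, so independence of $X_{a},Y_{a}$ is preserved; the deficiencies are $O(\log n)\le 2\ell$ and $\ell+O(1)\le 2\ell$ respectively (using $\ell=\gamma k\gg\log n$); and by construction $\se(x,y)_{i^{*}}\equiv a$ on $\sup(X_{a})\times\sup(Y_{a})$.

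For the structural bullet, fix any $i$ and use the exact decomposition
\[
  (X,Y) \;=\; \sum_{s,a}\Pr[\mathsf{SR}(Y)_i=s]\,\Pr[E(X,s)=a] \cdot \bigl((X\mid E(X,s)=a),\,(Y\mid \mathsf{SR}(Y)_i=s)\bigr),
\]
which is exact because $X,Y$ are independent. Each summand is a product of independent subsources on which $\se(\cdot,\cdot)_{i}$ is identically $a$. Discarding summands with $\Pr[\mathsf{SR}(Y)_i=s]<2^{-20\ell}$ or $\Pr[E(X,s)=a]<2^{-20\ell}$ removes total mass at most $2^{d}\cdot 2^{-20\ell}+2^{\ell}\cdot 2^{-20\ell} \le 2^{-10\ell}$; the retained pieces satisfy all four advertised bullets.

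The hard part will be calibrating the extractor parameters so that $r=n^{O(1)}$, $\ell=\gamma k$, and the overall extractor error stays below $2^{-\ell}$ simultaneously for every regime $k>\log^{10}n$. When $k$ is a large polynomial in $\log n$, a naive GUV-type choice for $E$ fails: the error--seed tradeoff forces either $r$ super-polynomial or $\gamma$ sub-constant. The resolution is to push more work into $\mathsf{SR}$ — either iterating the seed-production stage, or replacing $E$ by an alternating-extractor/Raz-type two-source extractor operating in the regime where $\mathsf{SR}(Y)_{i^{*}}$ is almost full-entropy rather than almost uniform — so that $E$ need only perform in its standard sweet spot. A secondary subtlety, worth flagging, is that $i^{*}$ depends on the distribution $Y$; this is allowed by the quantifier structure of the first bullet, but it must be threaded carefully through the subsource bookkeeping to ensure $X_{a},Y_{a}$ really are deficiency-$2\ell$ subsources of the original sources.
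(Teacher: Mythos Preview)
The paper does not prove this statement; it is quoted as Theorem~4.3 of \cite{brsw06} and used as a black box to derive the challenge-response mechanism (Theorem~\ref{thm:cr}). So there is no in-paper argument to compare against, only the original BRSW proof.

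Your overall architecture --- turn one source into a $\poly(n)$-row object and combine each row with the other source --- is indeed the BRSW template, and your decomposition for the second bullet is correct and goes through verbatim once the construction is fixed.

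The gap is precisely the one you flag, and it is not a calibration issue but the entire content of the theorem. With $d=O(\log n)$ as you set it, no strong seeded extractor $E$ with seed length $d$ can have error below $2^{-O(\log n)}$, so the hypothesis $\eps_E\le 2^{-\ell-1}$ is unattainable once $\ell=\gamma k\gg\log n$; the construction simply does not exist. Pushing $d$ up to $\Theta(\ell)$ repairs $E$, but then your $\mathsf{SR}$ --- a seeded extractor enumerated over $O(\log n)$-bit seeds --- can only make its good row $1/\poly(n)$-close to uniform, and that error swamps the $2^{-\ell}$ scale needed for the first-bullet averaging: you cannot show that any $s^{*}$ in the effective support of $\mathsf{SR}(Y)_{i^{*}}$ has $\Pr[E(X,s^{*})=a]\ge 2^{-2\ell}$, because the set of such seeds may have density only $2^{-\Theta(\ell)}$ in $\{0,1\}^{d}$.

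BRSW resolves this along the second line you sketch. They replace $\mathsf{SR}$ by a lossless condenser (enumerated over $O(\log n)$-bit seeds) so that some row has min-entropy rate above $1/2$ rather than being close to uniform, and they replace $E$ by Raz's two-source extractor, which takes one rate-$>1/2$ source and one $\polylog(n)$-entropy source and achieves error $2^{-\Omega(k)}$ with no uniform-seed requirement. This sidesteps the seed-length/error deadlock entirely. So your instinct about the fix is right, but carrying it out --- in particular, verifying that the condenser-plus-Raz composition still admits the product-subsource decomposition with the stated deficiencies --- is the substantive work of the BRSW proof rather than a parameter adjustment on top of your two-seeded-extractor pipeline.
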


\begin{proof}[Proof of Theorem~\ref{thm:cr}]
We first describe the algorithm for computing the response function $\resp(x,y,\ch(x,y))$ on input $x,y \in \{0,1\}^n$. The algorithm computes $\se(x,y)$, where the output length of $\se$ is set to $\ell$. The algorithm then checks whether or not $\ch(x,y)$ appears as a row in $\se(x,y)$. If so the algorithm outputs $\fixed$; otherwise it outputs $\hasent$.

We turn to the analysis. Assume first that $\ch(X,Y)$ is fixed. By Theorem~\ref{thm:two sources to srs}, there exist deficiency $2\ell$ subsources $X' \subset X$, $Y' \subset Y$ and an index $i \in [r]$ such that with probability $1$ over $(x,y) \sim (X',Y')$ it holds that $\se(x,y) = \ch(x,y)$, thus proving the first part of the theorem.

Assume now that for any deficiency $20\ell$ subsources $\hat{X} \subset X$, $\hat{Y} \subset Y$, it holds that $\ch(\hat{X},\hat{Y})$ is $\eps$-close to having min-entropy $k$. Consider any fixed $i \in [r]$. By Theorem~\ref{thm:two sources to srs}, $(X,Y)$ is $2^{-10\ell}$-close to a convex combination of subsources such that every $(\hat{X},\hat{Y})$ in the combination has the four listed properties. Since $\ch(\hat{X},\hat{Y})$ is $\eps$-close to having min-entropy $k$, we have that
$$
\pr_{(x,y) \sim (\hat{X},\hat{Y})}\left[ \ch(x,y) = \se(x,y)_i \right] \le 2^{-k} + \eps.
$$
Accounting for the distance from $(X,Y)$ to the convex combination,
$$
\pr_{(x,y) \sim (X,Y)}\left[ \ch(x,y) = \se(x,y)_i \right] \le 2^{-k} + \eps + 2^{-10\ell}.
$$
Therefore, by the union bound over all $i \in [r]$,
$$
\pr_{(x,y) \sim (X,Y)}\left[ \exists i \in [r] \quad \ch(x,y) = \se(x,y)_i \right] \le (2^{-k} + \eps + 2^{-10\ell}) r.
$$
As $r = \poly(n)$ and since $k \le \ell \le 10\ell$, the proof follows.
\end{proof}


\section{Entropy-Trees and Tree-Structured Sources}\label{sec:entropy trees}

\begin{definition}
An \emph{entropy-tree} $T$ is a complete rooted binary tree where some of the nodes of the tree are labeled by one of the following labels: $\f, \h, \btop, \bmid, \bbot$, according to the following set of rules:
\begin{itemize}
  \item $\lbl(\root(T)) \in \{\h, \btop \}$.
  \item There is exactly one node in $T$ that is labeled by $\btop$, one node that is labeled by $\bmid$, and one node labeled by $\bbot$, denoted by $\vvtop(T)$, $\vvmid(T)$ and $\vvbot(T)$, respectively. Further, $\vvmid(T)$ is a descendant of $\leftson(\vvtop(T))$, and $\vvbot(T)$ is a descendant of $\leftson(\vvmid(T))$. We denote $\itop(T) = \dep(\vvtop(T))$, $\imid(T) = \dep(\vvmid(T))$, and $\ibot(T) = \dep(\vvbot(T))$.
  \item If $v$ is a non-leaf that has no label or otherwise is labeled by $\f$ or $\bbot$ then both its sons have no label.
  \item If $v$ is a non-leaf labeled by $\h, \btop$ or $\bmid$ then $\leftson(v)$ has a label. Further,
  \begin{itemize}
    \item If $\lbl(\leftson(v)) = \f$ then $\rightson(v)$ has a label and $\lbl(\rightson(v)) \neq \f$.
    \item If $\lbl(\leftson(v)) \neq \f $ then the right son of $v$ has no label.
  \end{itemize}
\end{itemize}
\end{definition}

In our proofs we consider two sources, each having its own tree-structure. In such cases, we use $v$ to denote a node in one entropy-tree and $u$ to denote a node in the other entropy-tree. So, for example, we will use $\vvtop(T_X)$ to denote the node in the first entropy-tree labeled by $\btop$, whereas we use $\uutop(T_Y)$ to denote the node in the second entropy-tree labeled by $\btop$.

\begin{definition}[Entropy-path]
Let $T$ be an entropy-tree. The \emph{entropy-path} of $T$ is the path that starts at $\root(T)$ and ends at $\vvbot(T)$. We denote the nodes on this path by $\root(T) = v_0(T), \ldots, v_{\ibot(T)}(T) = \vvbot(T)$. We say that a path $p$ in $T$ contains the entropy-path of $T$ if $p$ starts at $\root(T)$ and goes through $\vvbot(T)$.
\end{definition}

\begin{definition}[Good son]
Let $T$ be an entropy-tree and let $v \neq \vvbot(T)$ be an ancestor of $\vvbot(T)$. The \emph{good son} of $v$ is defined to be the unique son of $v$ that is an ancestor of $\vvbot(T)$.
\end{definition}

\begin{definition}
Let $T$ be an entropy-tree. We say that an $n$-bit random variable $X$ has a \emph{$T$-structure} with parameters $(k,\eps)$ if the following holds. For any node $v$ in $T$:
\begin{itemize}
  \item If $\lbl(v) = \f$ then $X_v$ is fixed to a constant.
  \item If $\lbl(v) = \btop$ then $X_v$ is $\eps$-close to a $k^{1/2}$-block-source.
  \item If $\lbl(v) = \bmid$ then $X_v$ is $\eps$-close to a $k^{1/4}$-block-source.
  \item If $\lbl(v) = \bbot$ then $X_v$ is $\eps$-close to a $k^{1/8}$-block-source.
  \item If $\lbl(v) = \h$ then the following holds:
     \begin{itemize}
        \item If $v$ is an ancestor of $\vvtop(T)$ then $\me(X_v) \ge k$.
        \item If $v$ is a descendant of $\vvtop(T)$ and an ancestor of $\vvmid(T)$ then $\me(X_v) \ge k^{1/2}$.
        \item If $v$ is a descendant of $\vvmid(T)$ then $\me(X_v) \ge k^{1/4}$.
     \end{itemize}
\end{itemize}
\end{definition}

%

We further make use of the following lemma, which is analogous to the two-types lemma of Barak~\etal (see~\cite{brsw06}, Lemma 6.8).

\begin{lemma}[Three-types lemma]\label{lemma:three cases lemma}
For any $(n,k)$-source $X$ there exists a deficiency $1$ subsource $X' \subset X$ such that (at least) one of the following holds:
\begin{itemize}
  \item $\me(\lefty(X')) \ge k-\sqrt{k}-1$.
  \item $X'$ is a $\sqrt{k}$-block-source.
  \item $\lefty(X')$ is fixed to a constant and $\me(\righty(X')) \ge k - \sqrt{k} - 1$.
\end{itemize}
\end{lemma}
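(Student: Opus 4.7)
My proof plan is to classify every atom $\ell$ of $L := \lefty(X)$ based on two parameters: $p_\ell := \Pr[L = \ell]$ and the conditional min-entropy $m_\ell := \me(\righty(X) \mid L = \ell)$. Since $\me(X) \ge k$, the two quantities are linked by $m_\ell \ge k + \log p_\ell$, so any atom that is not too light automatically has $m_\ell$ close to $k$. Guided by this, I would split the atoms into three disjoint types: (i) \emph{light} atoms, with $p_\ell \le 2^{-(k-\sqrt{k})}$; (ii) \emph{block-friendly} atoms, with $2^{-(k-\sqrt{k})} < p_\ell \le 2^{-(\sqrt{k}+1)}$; and (iii) \emph{heavy} atoms, with $p_\ell > 2^{-(\sqrt{k}+1)}$. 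The min-entropy bound on $X$ gives $m_\ell \ge \sqrt{k}$ for every non-light atom, and $m_\ell \ge k - \sqrt{k} - 1$ for every heavy atom in particular.

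The next step is to decide what to do based on which class carries the majority of the mass. The key elementary fact I would use throughout is that conditioning on an event of probability at least $1/2$ changes the probability of any atom by a factor of at most $2$, and so changes $\me$ by at most $1$. If the light atoms carry mass at least $1/2$, condition $X$ on $L$ being light; this has deficiency at most $1$, and the doubling bound yields $\me(\lefty(X')) \ge (k-\sqrt{k})-1$, producing the first bullet. If the block-friendly atoms carry mass at least $1/2$, condition on $L$ being block-friendly; again deficiency at most $1$, and now $\me(\lefty(X')) \ge \sqrt{k}$ by the same argument while $m_\ell \ge \sqrt{k}$ on the conditioned support, producing the second bullet via the $\sqrt{k}$-block-source.

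The remaining case is when the heavy atoms together carry mass at least $1/2$. Since every heavy atom $\ell$ satisfies $m_\ell \ge k - \sqrt{k} - 1$, the plan is to locate a single heavy $\ell^\ast$ with $p_{\ell^\ast} \ge 1/2$ and condition on $L = \ell^\ast$, which fixes $\lefty(X')$ and leaves $\me(\righty(X')) \ge k-1$, yielding the third bullet. The main obstacle I anticipate is the sub-case in which the heavy mass is spread across several atoms, none of which individually has probability at least $1/2$. There are at most $2^{\sqrt{k}+1}$ heavy atoms in total, so this situation is combinatorially constrained, but turning this bound into a deficiency-$1$ subsource satisfying one of the three bullets is the delicate step. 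I would try to resolve it by regrouping the heavy atoms more carefully (possibly by intersecting the conditioning event with a constraint on $\righty(X)$) so that either enough mass concentrates on a single atom to give the third bullet, or the distribution of $\lefty(X')$ on the heavy class becomes spread enough to qualify for the second bullet.
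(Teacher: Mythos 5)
Your route --- bucketing atoms $\ell$ of $\lefty(X)$ by the marginal probability $p_\ell = \Pr[\lefty(X)=\ell]$ --- is genuinely different from the paper's, which invokes the BRSW fixing-entropies lemma (Lemma~\ref{lem:brsw fixing entropies}) and in effect buckets by the conditional min-entropy $m_\ell = \me(\righty(X)\mid\lefty(X)=\ell)$. You recover lower bounds on $m_\ell$ from $p_\ell$ via $m_\ell \ge k + \log p_\ell$; the two classifications overlap but do not coincide, and yours is the more elementary and self-contained one. Your light and block-friendly cases are correct: conditioning on a class of left-values of mass at least $1/2$ leaves each conditional distribution $\righty(X)\mid\lefty(X)=\ell$ intact while at most doubling each surviving $p_\ell$, so your calculations cleanly give bullets (1) and (2).

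The heavy-case gap you flag is genuine, and the repair you sketch does not close it. Intersecting with an event defined via $\righty(X)$ cannot collapse the support of $\lefty(X')$ to a single point (as bullet (3) requires), nor can it raise $\me(\lefty(X'))$ above $\log|\sup(\lefty(X))|$, which can sit strictly between $1$ and $\sqrt k$ when there are few heavy atoms. Concretely, take $\lefty(X)$ uniform on four heavy atoms with $\righty(X)\mid\lefty(X)=\ell$ uniform on $2^{k-2}$ strings for each $\ell$; this is an $(n,k)$-source, yet no deficiency-$1$ subsource satisfies any bullet, since fixing $\lefty$ costs deficiency $\log 4 = 2$ and $\me(\lefty(X'))$ is capped at $2<\sqrt k$ for any subsource once $k>4$. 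The obstruction is intrinsic to the third case rather than an artifact of your bucketing: the paper's own proof handles its $i=2$ case by ``conditioning on an arbitrary fixing of $\lefty(X'')$,'' a step whose deficiency is $\me(\lefty(X''))$ rather than $1$. A complete treatment should let the third case carry this extra $\me(\lefty)$ deficiency on top of the factor-$2$ conditioning and account for it when iterating in Fact~\ref{prop:having lowdef t-source}.
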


For the proof of Lemma~\ref{lemma:three cases lemma}, we make use of the following ``fixing entropies'' lemma by Barak~\etal~\cite{brsw06}. We state the lemma for a special case (and with slightly stronger parameters, which are easily achievable for that specific case by following the proof of~\cite{brsw06}).

\begin{lemma}[\cite{brsw06}, Lemma 3.20]\label{lem:brsw fixing entropies}
Let $X$ be an $(n,k)$-source. Let $0 < \tau_1 < \tau_2 < n$ be any two numbers. Set $\tau_0 = 0$ and $\tau_3 = n$. Then, there exist a deficiency $1$ subsource $X' \subset X$ and an index $i \in \{0,1,2\}$ such that the following holds:
\begin{itemize}
  \item For any fixing of $\lefty(X')$, $\me(\righty(X')) \in [\tau_i, \tau_{i+1}]$.
  \item $\me(\lefty(X')) + \tau_{i+1} \ge k-1$.
\end{itemize}
\end{lemma}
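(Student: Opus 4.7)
The plan is to invoke Lemma~\ref{lem:brsw fixing entropies} on $X$ with thresholds $\tau_1 = \sqrt{k}$ and $\tau_2 = k - \sqrt{k} - 1$ (assuming $k$ is large enough that $0 < \tau_1 < \tau_2 < n$). That lemma hands back a deficiency $1$ subsource $X' \subset X$ together with an index $i \in \{0,1,2\}$, and the goal is to show that each value of $i$ corresponds, possibly after a small additional conditioning, to one of the three bullets of Lemma~\ref{lemma:three cases lemma}.

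For $i = 0$, the first clause of Lemma~\ref{lem:brsw fixing entropies} gives $\me(\righty(X') \mid \lefty(X') = v) \le \sqrt{k}$ for every $v$ in $\sup(\lefty(X'))$, while the second clause immediately yields $\me(\lefty(X')) \ge k - 1 - \sqrt{k}$, which is precisely the first bullet. For $i = 1$, the second clause yields $\me(\lefty(X')) \ge k - 1 - (k-\sqrt{k}-1) = \sqrt{k}$, and the first clause says the right block has min-entropy at least $\sqrt{k}$ under every fixing of the left block, so $X'$ is a $\sqrt{k}$-block-source, matching the second bullet.

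The case $i = 2$ is the main obstacle: here the first clause tells us $\me(\righty(X') \mid \lefty(X') = v) \ge k - \sqrt{k} - 1$ for every $v$ in $\sup(\lefty(X'))$, but the second clause is vacuous because $\tau_3 = n$, so no lower bound on $\me(\lefty(X'))$ falls out. To route this case into the second or third bullet, I would further dichotomize: if $\me(\lefty(X')) \ge \sqrt{k}$ then, since $k - \sqrt{k} - 1 \ge \sqrt{k}$ for $k$ not too small, $X'$ is already a $\sqrt{k}$-block-source and we are in bullet two. Otherwise the maximum-likelihood value $v^\star$ of $\lefty(X')$ has probability at least $2^{-\sqrt{k}}$, and conditioning on $\lefty(X') = v^\star$ produces a subsource in which $\lefty$ is fixed and, by the pointwise lower bound above, $\me(\righty) \ge k - \sqrt{k} - 1$, matching the third bullet. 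The delicate point is deficiency accounting in this last sub-case, since the extra fixing costs up to $\sqrt{k}$ additional bits on top of the initial $1$; the cleanest way I would address this is to front-load a trivial dichotomy before invoking Lemma~\ref{lem:brsw fixing entropies}, asking whether some single value of $\lefty(X)$ already has probability at least $1/2$. If so, fixing to that value directly satisfies the third bullet at deficiency $1$ (the conditional min-entropy of the right block is then at least $k-1 \ge k-\sqrt{k}-1$); if not, one runs the three-way analysis above on $X$, in which case $i=2$ combined with the observation $\me(\lefty(X)) > 1$ forces $\me(\lefty(X')) \ge \sqrt{k}$ (after tightening the bucket analysis inside Lemma~\ref{lem:brsw fixing entropies}), landing us in bullet two with no further deficiency loss.
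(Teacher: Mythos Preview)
Your proposal does not address the stated lemma. Lemma~\ref{lem:brsw fixing entropies} is quoted from \cite{brsw06} and carries no proof in this paper; you invoke it as a black box in your very first sentence. What you have actually written is a proof of Lemma~\ref{lemma:three cases lemma} (the three-types lemma), the lemma that immediately precedes it and that the paper derives \emph{from} Lemma~\ref{lem:brsw fixing entropies}.

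Treating your write-up as a proof of Lemma~\ref{lemma:three cases lemma} instead: the cases $i=0$ and $i=1$ coincide with the paper's argument verbatim. For $i=2$ the paper is far terser than you are---it simply conditions the deficiency-$1$ subsource $X''$ on an arbitrary value of $\lefty(X'')$ and declares the third bullet, with no further dichotomy. You are right that this extra fixing is not obviously deficiency~$1$, and the paper does not justify that point; but your attempted repair does not close the gap either. The claim that ``$\me(\lefty(X)) > 1$ forces $\me(\lefty(X')) \ge \sqrt{k}$ after tightening the bucket analysis inside Lemma~\ref{lem:brsw fixing entropies}'' is not supported by anything in the statement of that lemma, and you give no indication of what the tightening would be. In the paper's only use of the three-types lemma (Fact~\ref{prop:having lowdef t-source}) just $O(\log n)$ total deficiency is needed, so the constant is immaterial downstream; but as a standalone argument for deficiency~$1$, neither the paper's $i=2$ step nor your patch is complete.
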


\begin{proof}[Proof of Lemma~\ref{lemma:three cases lemma}]
Set $\tau_1 = \sqrt{k}$, $\tau_2 = k - \sqrt{k} - 1$ and apply Lemma~\ref{lem:brsw fixing entropies} to obtain a deficiency $1$ subsource $X'' \subset X$ and $i \in \{0,1,2\}$. We consider three cases, according to the value of $i$.

\begin{itemize}
   \item If $i = 0$ then by the second item of Lemma~\ref{lem:brsw fixing entropies}, $\me(\lefty(X'')) + \tau_1 \ge k-1$. Thus, $\me(\lefty(X'')) \ge k-\sqrt{k}-1$. We then take $X' = X''$.

  \item If $i = 1$ then for any fixing of $\lefty(X'')$, we have that $\me(\righty(X'')) \in [\tau_1, \tau_2] = [\sqrt{k},k-\sqrt{k}-1]$. By the second item of Lemma~\ref{lem:brsw fixing entropies}, $\me(\lefty(X'')) \ge k - 1 - \tau_2 = \sqrt{k}$.
      Therefore, $X''$ is a $\sqrt{k}$-block-source, and we take $X' = X''$.
  \item If $i = 2$ then for any fixing of $\lefty(X'')$, we have that $\me(\righty(X'')) \ge \tau_2 = k - \sqrt{k} - 1$. We take $X'$ be a subsource of $X''$ conditioned on an arbitrary fixing of $\lefty(X'')$.
\end{itemize}

%
\end{proof}

The following fact follows by applying Lemma~\ref{lemma:three cases lemma} iteratively as was described in Section~\ref{sec:overview entropy trees} (see also Lemma 6.10 of~\cite{brsw06}, which proves essentially the same result).

\begin{fact}\label{prop:having lowdef t-source}
Let $X$ be an $(n,k)$-source with $k = \omega(\log^8{n})$. Then, there exist an entropy-tree $T$ and a deficiency $\log{n}$ subsource $X_T \subset X$ that has a $T$-structure with parameters $(k/2, 2^{-\Omega(k^{1/10})})$.
\end{fact}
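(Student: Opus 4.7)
The plan is to construct both the entropy-tree $T$ and the subsource $X_T$ by three successive applications of the three-types lemma, following the logic sketched in Section~\ref{sec:overview entropy trees}. Phase~1 extracts the block-source node $\vvtop$ starting from $\root(T)$, phase~2 extracts $\vvmid$ starting inside $\leftson(\vvtop)$, and phase~3 extracts $\vvbot$ starting inside $\leftson(\vvmid)$. The labels on every other node are determined on the fly by which case of Lemma~\ref{lemma:three cases lemma} fires, so the entropy-tree is assembled in tandem with the subsource.

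For phase~1 I would start at $v = \root(T)$ with parameter $\sqrt{k}$ and iterate: when case~1 of Lemma~\ref{lemma:three cases lemma} fires, label $v$ by $\h$ and descend to $\leftson(v)$; when case~3 fires, label $v$ by $\h$, label $\leftson(v)$ by $\f$, and descend to $\rightson(v)$; when case~2 fires, label $v$ by $\btop$, record $v$ as $\vvtop$, and stop. Each step costs one unit of deficiency, and the invariant I would maintain is that after $r$ steps the current block has length $n/2^r$ and min-entropy at least $k - r(\sqrt{k}+1)$. A short entropy-rate calculation shows that once the current min-entropy exceeds the half-block length, both case~1 and case~3 become infeasible (each would require a half-block to hold more entropy than its length allows), forcing case~2; this happens within $r = O(\log(n/k)) = O(\log n)$ iterations, using $k = \omega(\log^{8}{n})$. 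Phases~2 and~3 are identical copies of the same procedure, applied to $\lefty(X_{\vvtop})$ with parameter $k^{1/4}$ and to $\lefty(X_{\vvmid})$ with parameter $k^{1/8}$; the starting min-entropies $k^{1/2}$ and $k^{1/4}$ come for free from the block-source properties of $X_{\vvtop}$ and $X_{\vvmid}$.

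The observation I want to stress is that every restriction done in phases~2 and~3 is a function of coordinates strictly inside $\lefty(X_{\vvtop})$ and $\lefty(X_{\vvmid})$ respectively; consequently, when the final subsource is projected onto $\vvtop$ it is literally a low-deficiency subsource of the phase-1 block-source, and similarly for $\vvmid$, so Lemma~\ref{lem:subsource of block source} applies cleanly. Because the three phases walk down a single contiguous root-to-$\vvbot$ path, the total accumulated deficiency is at most $\dep(\vvbot) \le \log n$, matching the bound in the statement.

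What remains is parameter accountancy against the $T$-structure definition with parameters $(k/2, 2^{-\Omega(k^{1/10})})$. For an $\h$-labeled node $v$, Fact~\ref{fact:def and entropy} applied to the starting min-entropy at $v$ (which is $k$, $k^{1/2}$, or $k^{1/4}$ depending on whether $v$ is an ancestor of $\vvtop$, of $\vvmid$, or of $\vvbot$) minus the $\log n$ total deficiency comfortably yields the thresholds $k/2$, $(k/2)^{1/2}$, $(k/2)^{1/4}$, using the slack provided by $k = \omega(\log^{8}{n})$. For a node labeled $\btop$, $\bmid$, or $\bbot$, Lemma~\ref{lem:subsource of block source} applied with $\eps = 2^{-\Omega(k^{1/10})}$ absorbs the $\log n$ deficiency and the $\log(1/\eps) = O(k^{1/10})$ slack, leaving block-source parameters at least $k^{1/2} - O(k^{1/10})$, $k^{1/4} - O(k^{1/10})$, and $k^{1/8} - O(k^{1/10})$, all above $(k/2)^{1/2}$, $(k/2)^{1/4}$, $(k/2)^{1/8}$ for large enough $k$. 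The single non-trivial step in all of this is the termination argument for phase~1 (and its phase-2/3 copies)---the calibration that forces case~2 of Lemma~\ref{lemma:three cases lemma} within $O(\log n)$ iterations---while everything else is routine bookkeeping that the hypothesis $k = \omega(\log^{8}{n})$ is precisely tuned to absorb.
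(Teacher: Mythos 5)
Your proposal is correct and takes the same route the paper itself gestures at: the paper's own proof of this fact is a one-line pointer to iterating Lemma~\ref{lemma:three cases lemma} as in Section~\ref{sec:overview entropy trees}, and you flesh this out into the three-phase descent (locating $\vvtop$, then $\vvmid$ inside $\lefty(X_{\vvtop})$, then $\vvbot$ inside $\lefty(X_{\vvmid})$), the termination argument that forces case~2 within $O(\log(n/k))$ steps of each phase, and the key structural observation that the phase-2/3 restrictions condition only on coordinates inside $\lefty(X_{\vvtop})$ (resp.\ $\lefty(X_{\vvmid})$), so the projected blocks at $\vvtop$ and $\vvmid$ remain genuine subsources of the earlier-phase block-sources and Lemma~\ref{lem:subsource of block source} applies. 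Your parameter accounting is a little loose in spots --- the min-entropy at an $\h$-node at depth $r$ is already down to roughly $k - r(\sqrt{k}+1)$ from the iterated three-types lemma (not merely $k - \log n$), and the phase-1 block-source parameter is $\sqrt{k}-O(\log n)$ rather than $\sqrt{k}$ --- but all of these corrections are of strictly lower order than the target thresholds $k/2$, $(k/2)^{1/2}$, $(k/2)^{1/4}$, $(k/2)^{1/8}$, and the hypothesis $k = \omega(\log^8 n)$ absorbs them exactly as you say.
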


\section{The Two-Source Sub-Extractor}\label{sec:the construction}

In this section we describe our two-source sub-extractor. Let $n$ be a power of $2$, and let $\ell$ be a parameter. On input $x,y \in \{0,1\}^n$, the computation of the sub-extractor is done in three steps.

\subsubsection*{Step 1 -- Identify the entropy-paths}

\paragraph{Setting the challenges.}With each node $v$ of $T_x$, we associate a $\log(n) \times \ell$ Boolean matrix, denoted by $\ch(x_v,y)$, computed from leaves to root, recursively as follows. All entries in rows $0,\ldots,\dep(v)-1$ of $\ch(x_v,y)$ are fixed to $0$. Row $\dep(v)$ of $\ch(x_v,y)$ contains $\bext(x_v,y)$, where $\bext$ is the extractor from Theorem~\ref{thm:li extractor} set to output $\ell$ bits. If $v$ is a non-leaf, rows $\dep(v)+1,\ldots,\log(n)-1$ are copied from the respective rows of $\ch(x_{\leftson(v)},y)$ or from the respective rows of $\ch(x_{\rightson(v)},y)$ according to the following rule. If
$$
\crm\left( x_v, y, \ch\left(x_{\leftson(v)},y\right) \right) = \fixed
$$
then the remaining rows are taken from the corresponding rows of $\ch(x_{\rightson(v)},y)$. Otherwise, the rows are taken from the corresponding rows of $\ch(x_{\leftson(v)},y)$. In the first case we say that $v$ $(x,y)$-favors its right son, and in the second case we say that $v$ $(x,y)$-favors its left son.

Analogously, with each node $u$ of $T_y$ we associate a $\log(n) \times \ell$ Boolean matrix, denoted by $\ch(y_u,x)$, defined recursively as follows. All entries in rows $0,\ldots,\dep(u)-1$ of $\ch(y_u,x)$ are fixed to $0$. Row $\dep(u)$ of $\ch(y_u,x)$ contains $\bext(y_u,x)$. If $u$ is a non-leaf, rows $\dep(u)+1,\ldots,\log(n)-1$ are copied from the respective rows of $\ch(y_{\leftson(u)},x)$ or from the respective rows of $\ch(y_{\rightson(u)},x)$ according to the following rule. If
$$
\crm\left( y_u, x, \ch\left(y_{\leftson(u)},x\right) \right) = \fixed
$$
then the remaining rows are taken from the corresponding rows of $\ch(y_{\rightson(u)},x)$. Otherwise, the rows are taken from the corresponding rows of $\ch(y_{\leftson(u)},x)$. In the first case we say that $u$ $(x,y)$-favors its right son, and in the second case we say that $u$ $(x,y)$-favors its left son.

\paragraph{Computing the entropy-paths.}

Let $v_0(x,y), v_1(x,y), \ldots, v_{\log(n)-1}(x,y)$ be the root-to-leaf path in $T_x$ defined by the property that $v_i(x,y)$ $(x,y)$-favors $v_{i+1}(x,y)$ for all $i=0,1,\ldots,\log(n)-2$. Similarly, let $u_0(x,y), u_1(x,y), \ldots, u_{\log(n)-1}(x,y)$ be the root-to-leaf path in $T_y$ defined by the property that $u_i(x,y)$ $(x,y)$-favors $u_{i+1}(x,y)$ for all $i=0,1,\ldots,\log(n)-2$. We denote $v_0(x,y), \ldots, v_{\log(n)-1}(x,y)$ by $\pobs(x,y)$ and call this path the \emph{observed entropy-path of $T_x$}. Similarly, we denote $u_0(x,y), \ldots, u_{\log(n)-1}(x,y)$ by $\qobs(x,y)$ and call this path the \emph{observed entropy-path of $T_y$}.

\paragraph{The computation done in Step 1.}
Given $x,y \in \{0,1\}^n$, at step 1 the sub-extractor computes $\pobs(x,y)$ and $\qobs(x,y)$. Clearly, this computation can be done in $\poly(n)$-time.

\subsubsection*{Step 2 -- Identify $\vvmid(T_X)$}

Given $x,y,\pobs(x,y)$, and $\qobs(x,y)$, at the second step the algorithm computes $\obsvvmid(x,y)$ as follows.

\paragraph{Setting the node-path challenges.}
Set $\ell' = \ell/\log^{3}{n}$. Let $v$ be a node in $T_x$ and let $p = w_0,\ldots,w_{\log(n)-1}$ be a root-to-leaf path in $T_y$. The node-path challenge associated with $(v,p)$, denoted by $\newch(x_v,y_p)$, is a $\log(n) \times \ell'$ Boolean matrix, defined as follows. For $j=0,\ldots,\log(n)-1$,
$$
\newch(x_v,y_p)_j = \bext(y_{w_j},x_v),
$$
where $\bext$ is the extractor from Theorem~\ref{thm:li extractor} set to output $\ell'$ bits.

\paragraph{Computing $\obsvvmid(x,y)$.}
We define $\obsvvmid(x,y)$ to be the node $v$ in $\pobs(x,y)$ with the largest depth such that
$$
\resp\left(x, y, \newch\left( x_v, y_{\qobs(x,y)} \right) \right) = \hasent.
$$
If no such node exists we define $v$, arbitrarily, as $\root(T_X)$. Note that computing $\obsvvmid(x,y)$ can be done in time $\poly(n)$.

\subsubsection*{Step 3 -- Determine the output}

Given $x,y$ and $\obsvvmid(x,y)$ computed in the previous step, he output of the sub-extractor is defined by
$$
\subext(x,y) = \bext\left(x_{\obsvvmid(x,y)} \circ x, y\right),
$$
where by $x_{\obsvvmid(x,y)} \circ x$ we mean the block-source with first block $x_{\obsvvmid(x,y)}$ and second block that contains $x$. Technically, we need to append $x_{\obsvvmid(x,y)}$ with zeros so to obtain a length $n$ string. Similarly, we append $y$ with $n$ zeros so to obtain a $2n$ bit string.

\subsubsection*{Recap}

We end this section by recapping the three steps in the computation of the sub-extractor. On input $x,y \in \{0,1\}^n$

\begin{enumerate}
  \item Compute $\pobs(x,y)$ and $\qobs(x,y)$.
  \item Compute $\obsvvmid(x,y)$.
  \item Output $\bext\left(x_{\obsvvmid(x,y)} \circ x, y\right)$.
\end{enumerate}


\section{Analysis of the Construction}\label{sec:anal}

In this section we prove Theorem~\ref{thm:subext main} by analyzing the algorithm described in Section~\ref{sec:the construction}. The proof is done in three steps, following the three steps of the construction. By Fact~\ref{prop:having lowdef t-source}, we may assume that $X$ has a $T_X$-structure and that $Y$ has a $T_Y$-structure for some entropy-trees $T_X, T_Y$, both with parameters $(k/2,2^{-\Omega(k^{1/10})})$. This costs only $\log{n}$ in deficiency and introduce a small error of $2^{-\Omega(k^{1/10})}$.  Throughout the analysis we only consider subsources with deficiency $o(k^{1/10})$ and so this error can be ignored.

\subsection{Analysis of Step 1}\label{sec:anal of step 1}

We start this section by proving the following claim.

\begin{claim}\label{claim:fixed first step}
There exist deficiency $\ell \log^2{n}$ subsources $\xf \subset X$, $\yf \subset Y$ with the following property. For every node $v$ in $T_X$ that is labeled by $\f$, it holds that $\ch((\xf)_v,\yf)$ is fixed to a constant. Further, for every node $u$ of $T_Y$ that is labeled by $\f$, it holds that $\ch((\yf)_u, \xf)$ is fixed to a constant.
\end{claim}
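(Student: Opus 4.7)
The plan is to handle the $\f$-labeled nodes of $T_X$ and $T_Y$ one at a time, at each step moving to a small-deficiency subsource of the ``opposite'' source so that the corresponding challenge matrix becomes constant. The key observation is: if $v \in T_X$ is labeled $\f$, then $X_v$ is fixed to a constant, and consequently $X_w$ is fixed for every descendant $w$ of $v$ (as a substring of the constant $X_v$). Unrolling the recursive definition of $\ch(X_v, Y)$ from Section~\ref{sec:the construction}, each non-zero row is an output of $\bext(X_w, Y)$ for some descendant $w$ of $v$, while the branching in the recursion is governed by calls of the form $\crm(X_u, Y, \cdot)$ for descendants $u$. Since every relevant $X_w, X_u$ is a constant, the entire matrix $\ch(X_v, Y)$ is a deterministic function of $Y$ with range $\{0,1\}^{\ell \log n}$.

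Before iterating, I observe that any entropy-tree has at most $\log n$ nodes labeled $\f$. Indeed, the set of labeled nodes forms a ``spine'' from $\root(T)$ down to $\vvbot(T)$ of length at most $\log n$, and the entropy-tree rules permit an $\f$-labeled node only as the left-sibling of a spine node whose good son is the right child; so there is at most one $\f$ per depth.

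Given this, I enumerate the $\f$-labeled nodes of $T_X$ as $v_1, \ldots, v_t$ with $t \le \log n$ and set $Y^{(0)} = Y$. At step $i$, since $X_{v_i}$ is a constant, $\ch(X_{v_i}, Y^{(i-1)})$ is a deterministic function of $Y^{(i-1)}$ into $\{0,1\}^{\ell \log n}$, so Fact~\ref{fact:def and fixing} yields a deficiency $\ell \log n$ subsource $Y^{(i)} \subset Y^{(i-1)}$ on which this matrix is constant. Setting $\yf = Y^{(t)}$ produces a deficiency $\ell \log^2 n$ subsource of $Y$ on which all $T_X$-side $\f$-challenges are fixed. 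I then perform the symmetric construction: the $\f$-labeled nodes $u$ of $T_Y$ have $Y_u$ fixed, a property preserved on $\yf$, so each matrix $\ch((\yf)_u, X)$ is a function of $X$ alone, and iteratively applying Fact~\ref{fact:def and fixing} yields $\xf \subset X$ of deficiency $\ell \log^2 n$ fixing all of these matrices.

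The only thing left to verify is that the two stages do not interfere: the first stage modifies only $Y$ and the second only $X$. For any $\f$-labeled $v \in T_X$, $(\xf)_v$ equals the same constant as $X_v$, hence $\ch((\xf)_v, \yf) = \ch(X_v, \yf)$ is fixed from the first stage, and symmetrically for $T_Y$. I do not foresee a real obstacle here; the argument is essentially an organized application of Fact~\ref{fact:def and fixing} together with the count of $\f$-nodes in an entropy-tree and the observation that fixing $X_v$ causes the recursively defined challenge matrix $\ch(X_v, Y)$ to depend on $Y$ alone.
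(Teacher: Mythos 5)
Your proof is correct and follows essentially the same approach as the paper: both fix $X_v$ for each $\f$-labeled $v$ to observe that $\ch(X_v,Y)$ depends only on $Y$, iterate Fact~\ref{fact:def and fixing} over the at most $\log n$ many $\f$-labeled nodes (one per level) to obtain $\yf$, then run the symmetric process on $T_Y$ to obtain $\xf$, and note the two stages do not interfere. Your unrolling of the recursive definition of $\ch$ makes explicit a step the paper states tersely, but the argument is the same.
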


\begin{proof}
Let $v$ be a node in $T_X$ that is labeled by $\f$. Since $X$ has a $T_X$-structure, $X_v$ is fixed to a constant, and so $\ch(X_v, Y)$ is a deterministic function of $Y$. Since $\ch(X_v, Y)$ consists of $\ell \log{n}$ bits, Fact~\ref{fact:def and fixing} implies that there exists a deficiency $\ell \log{n}$-subsource $Y' \subset Y$ such that $\ch(X_v,Y')$ is fixed to a constant. Repeating this argument for every $v \in T_X$ that is labeled by $\f$, we get a subsource $\yf \subset Y$ such that $\ch(X_v,\yf)$ is fixed to a constant for every $v$ in $T_X$ that is labeled by $\f$. By the definition of an entropy-tree, there is at most one node labeled by $\f$ in each level of $T_X$ and since $\dep(T_X) = \log{n}$, we have that $\yf$ is a deficiency $\ell \log^2{n}$ subsource of $Y$.

Since $\yf$ is a subsource of $Y$, for every node $u$ in $T_Y$ that is labeled by $\f$ it holds that $(\yf)_v$ is fixed to a constant. We now perform the analogous process on $T_Y$ to obtain a deficiency $\ell \log^{2}{n}$ subsource $\xf \subset X$ such that for every node $u$ in $T_Y$ that is labeled by $\f$ it holds that $\ch((\yf)_u, \xf)$ is fixed to a constant. Note that since $\xf$ is a subsource of $X$, it also holds that $\ch((\xf)_v, \yf)$ is fixed to a constant for every $v$ in $T_X$ that is labeled by $\f$. Thus, informally speaking, by performing the analogous process to $T_Y$ we do not ``ruin'' the desired property we obtained first for $T_X$.
\end{proof}

%
%

Next we show that there exist low-deficiency subsources $\xfi \subset \xf$, $\yfi \subset \yf$ ($\mathsf{FI}$ stands for ``fixed identified''), restricted to which, $\subext$ correctly identifies the nodes in $T_X,T_Y$ that are labeled by $\f$.

\begin{claim}\label{claim:fixed second step}
There exist deficiency $O(\ell \log^2{n})$ subsources $\xfi \subset \xf$, $\yfi \subset \yf$ with the following property. For every node $v$ of $T_X$ that is labeled by $\f$ and for every node $u$ of $T_Y$ that is labeled by $\f$, it holds that
\begin{align*}
&\pr\left[ \parent(v) \,\, (\xfi,\yfi)\text{-favors }v\right] = 0,\\
&\pr\left[ \parent(u) \,\, (\xfi,\yfi)\text{-favors }u\right] = 0.
\end{align*}
\end{claim}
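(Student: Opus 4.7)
The plan is: first, observe by inspection of the entropy-tree rules that any node labeled $\f$ is necessarily a \emph{left} son. Indeed, if $v$ carries any label, then $\parent(v)$ cannot be unlabeled or labeled by $\f$ or $\bbot$ (those have unlabeled sons), so $\lbl(\parent(v)) \in \{\h, \btop, \bmid\}$; for all three of these parent labels, the rules force the right son either to be unlabeled or to carry a label other than $\f$. Hence $v = \leftson(\parent(v))$, so $\parent(v)$ $(\xfi, \yfi)$-favors $v$ exactly when
$$
\crm\left( (\xfi)_{\parent(v)},\, \yfi,\, \ch\left((\xfi)_v, \yfi\right) \right) = \hasent.
$$
The goal therefore reduces to producing subsources on which this call returns $\fixed$ with probability $1$, and the analogous statement for $T_Y$ is handled symmetrically.

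Next I would invoke Claim~6.1 and Theorem~5.1. Claim~6.1 tells us that $\ch((\xf)_v, \yf)$ is fixed to a constant on $(\xf, \yf)$, and being fixed to a constant is preserved under passage to any subsource. Since $\xf$ and $\yf$ are independent and, after zero-padding, are $n$-bit sources of min-entropy at least $\kout - O(\ell \log^2 n)$, which comfortably exceeds $\log^{10} n$ for the target parameter regime, the first bullet of Theorem~5.1 applies to the pair $((\xf)_{\parent(v)}, \yf)$ with its constant challenge of length $\ell \log n$. It yields a pair of deficiency-$2\ell\log n$ subsources on which $\crm$ returns $\fixed$ with probability exactly $1$.

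I would then iterate this over the at most $\log n$ nodes of $T_X$ labeled $\f$ (at most one per level), taking nested subsources at each step; neither the already-obtained fixings of other $\f$-challenges nor the earlier per-step $\fixed$ guarantees are disturbed by further subsourcing, and independence between the two sides is preserved because every subsourcing step conditions on an event involving only one of the two sources. After that I run the symmetric argument for the $\f$-labeled nodes of $T_Y$. Each tree contributes at most $2\log n \cdot 2\ell \log n$ in accumulated deficiency, for a total of $O(\ell \log^2 n)$ on top of $(\xf, \yf)$, which matches the bound in the claim.

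The only real difficulty will be bookkeeping, not any new idea: I must (i) verify that the min-entropy hypothesis of Theorem~5.1 continues to hold throughout the cascade of subsourcings, which follows from the standing assumption that $\kout$ dominates the total accumulated deficiency; (ii) confirm that the ``fixed to a constant'' property from Claim~6.1 and the per-step $\fixed$ guarantees genuinely survive every subsequent conditioning, which is immediate from the definition of a subsource since a constant random variable remains constant; and (iii) keep track of the zero-padding convention that turns the substrings $x_v$ into $n$-bit inputs for $\crm$ and $\bext$, which is the same convention used already in the construction of Section~7.
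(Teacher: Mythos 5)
Your strategy matches the paper's: observe that $\f$-labeled nodes are necessarily left sons, use Claim~\ref{claim:fixed first step} to get a constant challenge, apply the first bullet of Theorem~\ref{thm:cr}, and iterate over the at most $\log n$ such nodes per tree. The structure and deficiency bookkeeping are all correct.

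There is, however, one genuine gap in the min-entropy justification. You state that $\xf$ and $\yf$ have min-entropy at least $\kout - O(\ell\log^2 n)$ and then apply Theorem~\ref{thm:cr} to the pair $\left((\xf)_{\parent(v)}, \yf\right)$. But the first argument fed to $\crm$ is the substring $(\xf)_{\parent(v)}$, not $\xf$ itself, and the min-entropy of a substring can be drastically smaller than that of the full source; indeed, if $\parent(v)$ is a descendant of $\vvmid(T_X)$ its min-entropy in the $T_X$-structure is only guaranteed to be about $k^{1/4}$, not $\Omega(k)$. The correct argument, which the paper makes, is to observe that since $\lbl(v)=\f$ forces $\lbl(\parent(v)) \in \{\h, \btop, \bmid\}$, the definition of a $T_X$-structured source gives $\me\left(X_{\parent(v)}\right) = \Omega(k^{1/4})$ (up to a $\er$ error), and Fact~\ref{fact:def and entropy} then yields $\me\left((\xf)_{\parent(v)}\right) = \Omega(k^{1/4}) - O(\ell\log^2 n) = \Omega(k^{1/4})$, which still exceeds $\log^{10} n$. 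Your conclusion survives, but the justification you offer for the min-entropy hypothesis of Theorem~\ref{thm:cr} is pointing at the wrong random variable and needs to invoke the label of $\parent(v)$.
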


\begin{proof}

Let $v$ be a node in $T_X$ that is labeled by $\f$. We first note that by the definition of an entropy-tree, $\root(T_X)$ cannot be labeled by $\f$, and so it is valid to refer to $\parent(v)$. Further, by the definition of an entropy-tree, if a node is labeled by $\f$ then it must be the left son of its parent. Thus, $\parent(v)$ $(x,y)$-favors $v$ if and only if
\begin{equation}\label{eq:I like this dessert}
\resp\left( x_{\parent(v)}, y, \ch\left(x_{\leftson(\parent(v))}, y\right)\right) = \hasent.
\end{equation}

By Claim~\ref{claim:fixed first step}, $\ch((\xf)_v,\yf)$ is fixed to a constant. Thus, to apply the challenge-response mechanism, we only need to show that both $(\xf)_{\parent(v)}$ and $\yf$ have a sufficient amount of entropy. By the definition of an entropy-tree, since $\lbl(v) = \f$, $\lbl(\parent(v)) \in \{ \h, \btop, \bmid \}$ and so $X_{\parent(v)}$ is $\er$-close to having min-entropy $\Omega(k^{1/4})$. Since $\xf$ is a deficiency $O(\ell \log^{2}{n})$ subsource of $X$, Fact~\ref{fact:def and entropy} implies that $(\xf)_{\parent(v)}$ is $2^{-k^{\Omega(1)}}$-close to having min-entropy $\Omega(k^{1/4}) - O(\ell \log^{2}{n}) = \Omega(k^{1/4})$. By a similar argument, $\yf$ is $\er$-close to having min-entropy $\Omega(k)$. Since $k^{1/4} = \omega(\log^{10}{n})$, Theorem~\ref{thm:cr} implies that there exist deficiency $2\ell \log{n}$ subsources $X' \subset \xf$, $Y' \subset \yf$ such that for any $(x,y) \in \sup((X',Y'))$, Equation~\eqref{eq:I like this dessert} fails to hold. Thus, for any such $x,y$ it holds that $\parent(v)$ does not $(x,y)$-favor $v$.

We repeat this argument for every node $v$ in $T_X$ that is labeled by $\f$ and obtain deficiency $2 \ell \log^{2}{n}$ subsources $X'' \subset \xf$, $Y'' \subset \yf$ with the property that for every $v$ in $T_X$ that is labeled by $\f$ it holds that
$$
\pr
\left[
  \parent(v)\,\, (X'',Y'')\text{-favors }v
\right] = 0.
$$
This is possible since the entropy of $Y$ and the entropies of $X_v$ for $v$ labeled by one of $\{ \h, \btop, \bmid \}$ remain large enough throughout the process.

We now apply the same argument for every node $u$ in $T_Y$ that is labeled by $\f$. Since $X''$ and $Y''$ are deficiency $O(\ell \log^{2}{n})$ subsources of $\xf, \yf$, respectively, we can obtain deficiency $O(\ell \log^{2}{n}$) subsources $\xfi \subset \xf$, $\yfi \subset \yf$, such that for any node $u$ in $T_Y$ that is labeled by $\f$ it holds that
$$
\pr\left[ \parent(u) \,\, (\xfi,\yfi)\text{-favors }u\right] = 0.
$$
We note that since $\xfi$ and $\yfi$ are subsources of $X'',Y''$, it also holds that for every node $v$ in $T_X$ that is labeled by $\f$,
$$
\pr\left[ \parent(v) \,\, (\xfi,\yfi)\text{-favors }v\right] = 0.
$$
That is, we have not ``ruined'' the desired property we obtained first in $T_X$ when working on $T_Y$. This concludes the proof of the claim.
\end{proof}

Up to this point, we found low-deficiency subsources $\xfi \subset X$ and $\yfi \subset Y$ such that the nodes labeled by $\f$ in $T_X, T_Y$ are correctly identified by the challenge-response mechanism  when applied to samples from $\xfi,\yfi$. Next we prove that with high probability over $(\xfi,\yfi)$, the entropy-paths in $T_X,T_Y$ are identified correctly by the sub-extractor in the sense that the observed entropy-paths contain the entropy-paths of the respective entropy-trees.

\begin{claim}\label{claim:find paths}
Except with probability $2^{-\Omega(\ell)}$ over $(x,y) \sim (\xfi,\yfi)$, it holds that
\begin{align*}
&\forall i \in \{0,\ldots,\ibot(T_X)\} \quad v_i(x,y) = v_i(T_X), \\
&\forall i \in \{0,\ldots,\ibot(T_Y)\} \quad u_i(x,y) = u_i(T_Y).
\end{align*}
%
\end{claim}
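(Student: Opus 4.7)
The plan is to prove the two halves of the claim—one for $T_X$ and one for $T_Y$—separately and combine them by a union bound; the two arguments are entirely symmetric, so I concentrate on $T_X$. Once I show that, with probability at least $1 - 2^{-\Omega(\ell)}$ over $(\xfi, \yfi)$, every node $v_i(T_X)$ with $i < \ibot(T_X)$ correctly $(x,y)$-favors its good son $v_{i+1}(T_X)$, the equality $v_i(x,y) = v_i(T_X)$ follows by a straightforward induction on $i$ starting from $v_0(x,y) = \root(T_X) = v_0(T_X)$. For each level $i$, the analysis splits: if $\leftson(v_i(T_X))$ is labeled $\f$, so $v_{i+1}(T_X) = \rightson(v_i(T_X))$, Claim~\ref{claim:fixed second step} already gives that $v_i(T_X)$ does not favor its left son with probability $1$; the complementary case $v_{i+1}(T_X) = \leftson(v_i(T_X))$ is where the real work lies.

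In the remaining case I need $\resp(x_{v_i(T_X)}, y, \ch(x_{v_{i+1}(T_X)}, y)) = \hasent$ with high probability over $(\xfi, \yfi)$. By Theorem~\ref{thm:cr}, this reduces to verifying an entropy hypothesis on the challenge: for every pair of independent deficiency-$20\ell$ subsources $\hat{X} \subset \xfi$ and $\hat{Y} \subset \yfi$, the matrix $\ch(\hat{X}_{v_{i+1}(T_X)}, \hat{Y})$ is $\er$-close to a distribution with min-entropy $\Omega(\ell)$. To get this I prove an auxiliary lemma asserting exactly this property for every node $v$ on the entropy-path of $T_X$, by downward induction on the depth of $v$, starting at $v = \vvbot(T_X)$ and climbing up to the root.

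The base case leverages the fact that $X_{\vvbot(T_X)}$ is $\er$-close to a $k^{1/8}$-block-source by the $T_X$-structure. Since the cumulative deficiency of $\hat{X}$ from $X$ is only $O(\ell \log^2 n) = o(k^{1/8})$, Lemma~\ref{lem:subsource of block source} and Fact~\ref{fact:def and entropy} imply that $\hat{X}_{\vvbot(T_X)}$ remains close to a block-source and $\hat{Y}$ retains min-entropy $\Omega(k)$; Theorem~\ref{thm:li extractor} then makes row $\ibot(T_X)$ of the matrix, which is $\bext(\hat{X}_{\vvbot(T_X)}, \hat{Y})$, $\er$-close to the uniform distribution on $\ell$ bits, so the whole matrix already has min-entropy $\ell$ up to error $\er$. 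For the inductive step at $v = v_i(T_X)$ with good son $v' = v_{i+1}(T_X)$, I split as in the outer analysis: if $v' = \rightson(v)$ then Claim~\ref{claim:fixed second step} guarantees with probability $1$, and hence in every subsource of $(\xfi, \yfi)$, that $v$ favors $v'$, so the lower rows of $\ch(\hat{X}_v, \hat{Y})$ are copied verbatim from $\ch(\hat{X}_{v'}, \hat{Y})$, whose entropy is controlled by the inductive hypothesis; if $v' = \leftson(v)$ then the inductive hypothesis applied to arbitrary deficiency-$O(\ell)$ sub-subsources of $(\hat{X}, \hat{Y})$ together with Theorem~\ref{thm:cr} yields that $v$ favors $v'$ with probability $1 - 2^{-\Omega(\ell)}$, and on this event the lower rows are again inherited from $\ch(\hat{X}_{v'}, \hat{Y})$, so the resulting mixture distribution is $2^{-\Omega(\ell)}$-close to one with min-entropy $\Omega(\ell)$.

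Armed with this lemma, one final application of Theorem~\ref{thm:cr} to $(\xfi, \yfi)$ at each level with $v_{i+1}(T_X) = \leftson(v_i(T_X))$ yields $\resp = \hasent$ with probability $1 - 2^{-\Omega(\ell)}$, and a union bound over the at most $\log n$ such levels in $T_X$, together with the analogous $\log n$ levels in $T_Y$ (handled symmetrically using the block-source structure of $Y_{\uubot(T_Y)}$ as the first argument of $\bext$), yields the overall failure bound of $2^{-\Omega(\ell)}$. The main obstacle I anticipate is the apparent circularity between the entropy of the challenge at $v$—which depends on correct favoring by descendants—and the correctness of favoring at $v$—which depends on the entropy of the descendants' challenges. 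I resolve this by running the induction from the bottom of the entropy-path upward and by phrasing the inductive claim as closeness to a high-min-entropy distribution, so that the per-level failure probabilities of the challenge-response mechanism accumulate into statistical distance rather than into nested probabilistic conditionings. A secondary but essential point is to keep the subsources independent throughout, which is automatic here because Claim~\ref{claim:fixed second step} delivers a probability-$1$ statement that need not be turned into a joint conditioning step, and because all subsources arising from Theorem~\ref{thm:cr} are rectangular by Theorem~\ref{thm:two sources to srs}.
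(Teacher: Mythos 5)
Your overall strategy matches the paper's: prove the claim for $T_X$ and $T_Y$ separately and union-bound, reduce to showing each node on the entropy-path $(x,y)$-favors its good son, use Claim~\ref{claim:fixed second step} to dispose of the $\lbl(\leftson(v))=\f$ case, and handle the remaining case by a bottom-up induction along the entropy-path in which the base case uses the block-source structure of $X_{\vvbot(T_X)}$ together with Theorem~\ref{thm:li extractor}, and the step uses Theorem~\ref{thm:cr}. This is also exactly the structure of the paper's proof; the paper simply folds the two layers you describe (an auxiliary entropy lemma plus an outer pass over levels) into a single backward induction whose induction hypothesis is already the probabilistic statement about favoring.

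The issue is in the quantifiers and error accounting of your auxiliary lemma, and it is not merely cosmetic. You state the lemma for \emph{all deficiency-$20\ell$ subsources} of $(\xfi,\yfi)$ with error $\er$. But in your own inductive step you apply the inductive hypothesis to ``deficiency-$O(\ell)$ sub-subsources of $(\hat X,\hat Y)$,'' and since $\hat X,\hat Y$ already have deficiency $20\ell$ in $(\xfi,\yfi)$, the sub-subsources have deficiency roughly $40\ell$ in $(\xfi,\yfi)$ and are therefore \emph{outside} the scope of the inductive hypothesis. Each application of Theorem~\ref{thm:cr} pushes the deficiency up by another $20\ell$, so the deficiency budget must grow linearly with the level index; the paper makes this explicit by letting the induction at level $i$ range over deficiency-$it$ subsources with $t = 20\ell\log n$. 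Similarly, your lemma asserts $\er$-closeness uniformly, but your own step only yields $2^{-\Omega(\ell)}$-closeness (the probability of mis-favoring), and that closeness degrades further by a $\poly(n)$ factor at each invocation of Theorem~\ref{thm:cr} — this is why the paper runs the recursion $\eps_i = (2^{-\Omega(\ell)} + \eps_{i+1})\cdot\poly(n)$ and then observes $\eps_0 = 2^{-\Omega(\ell)}\cdot 2^{O(\log^2 n)} = 2^{-\Omega(\ell)}$ only because $\ell = \omega(\log^2 n)$. As written, your auxiliary lemma cannot be proven by the induction you describe; once you widen the deficiency budget to grow by $\Theta(\ell)$ per level and let the error grow by a $\poly(n)$ factor per level, you arrive at the paper's formulation, so the gap is one of bookkeeping rather than of idea, but it does need to be filled for the argument to close.
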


\begin{proof}
We prove the first equation in the statement of the claim. The proof of the second equation is similar, and then the proof of the claim follows by the union bound. We first observe that by the definition of an entropy-tree, for any ancestor $v \neq \vvbot(T_X)$ of $\vvbot(T_X)$ it holds that $\lbl(\leftson(v)) = \f$ if and only if $\rightson(v)$ is the good son of $v$. Indeed, on one hand, if $\leftson(v)$ is labeled by $\f$ then $\leftson(v)$ cannot be an ancestor of $\vvbot(T_X)$ as all of $\leftson(v)$'s descendants have no label. On the other hand, since $v$ has a label and its label can only be one of $\h, \btop, \bmid$, if its left son is not labeled by $\f$ then $\rightson(v)$ has no label, and so $\rightson(v)$ cannot be an ancestor of $v$ as all of its descendants have no label.

Ideally, given this observation, we would have liked to prove by a backward induction on $i = \ibot(T_X)-1, \ldots, 1, 0$ that
$$
\pr_{(x,y) \sim (\xfi,\yfi)}
\left[
  \forall j \in \{i, \ldots, \ibot(T_X)-1\} \quad v_j(T_X) \,\, (x,y)\text{-favors its good son}\,
\right] \ge 1 - 2^{-\Omega(\ell)}.
$$
Indeed, note that the claim will then follow by considering $i = 0$. However, we need to prove a stronger statement so to have a stronger induction hypothesis, as otherwise we will not be able to carry the induction step. More precisely, set $t = 20 \ell \log{n}$. Let $\eps_{\ibot(T_X)-1} = 2^{-\Omega(\ell)}$. For $i = \ibot(T_X)-2, \ldots, 1, 0$, define $\eps_i = (2^{-\Omega(\ell)} + \eps_{i+1}) \cdot \poly(n)$. We prove by a backward induction on $i = \ibot(T_X)-1, \ldots, 1, 0$ that for any deficiency $it$ subsources $X' \subset \xfi$, $Y' \subset \yfi$, it holds that
$$
\pr_{(x,y) \sim (X',Y')}
\left[
  \forall j \in \{i, \ldots, \ibot(T_X)-1\} \quad v_j(T_X) \,\, (x,y)\text{-favors its good son}\,
\right] \ge 1 - \eps_i.
$$
We note that the claim follows by considering $i = 0$ as $\eps_0 = 2^{-\Omega(\ell)} \cdot 2^{O(\log^{2}{n})} = 2^{-\Omega(\ell)}$.

We start with the base of the induction $i = \ibot(T_X)-1$. Let $X' \subset \xfi$, $Y' \subset \yfi$ be deficiency $(\ibot(T_X)-1)t$ subsources. Consider two cases according to the label of $\leftson(v_{\ibot(T_X)-1})$. If $\lbl(\leftson(v_{\ibot(T_X)-1})) = \f$ then by Claim~\ref{claim:fixed second step},
$$
\pr_{(x,y) \sim (\xfi, \yfi)}
\left[
  v_{\ibot(T_X)-1}\,\, (x,y)\text{-favors } \leftson\left(v_{\ibot(T_X)-1}\right)
\right] = 0.
$$
Since $X',Y'$ are subsources of $\xfi, \yfi$, respectively, the same holds for $(x,y) \sim (X',Y')$. Moreover, as $\rightson(v_{\ibot(T_X)-1})$ is the good son of $v_{\ibot(T_X)-1}$, the basis of the induction for this case is proven.

Consider now the case $\lbl(\leftson(v_{\ibot(T_X)-1})) \ne \f$. By the observation above, in this case, the good son of $v_{\ibot(T_X)-1}$ is its left son and so $\leftson(v_{\ibot(T_X)-1}) = \vvbot(T_X)$. Thus, $v_{\ibot(T_X)-1}$ $(x,y)$-favors its good son if and only if
\begin{equation}\label{eq:i am tierd a bit}
\resp\left( x_{v_{\ibot(T_X)-1}}, y, \ch\left( x_{\vvbot(T_X)}, y \right) \right) = \hasent.
\end{equation}
Thus, to conclude the proof of the base case, it is enough to show that Equation~\eqref{eq:i am tierd a bit} holds with probability $1-2^{-\Omega(\ell)}$ over $(x,y) \sim (X',Y')$. To this end, first note that $\ch(x_{\vvbot(T_X)},y)$ contains $\bext(x_{\vvbot(T_X)},y)$ as a row. By Theorem~\ref{thm:cr}, it is enough to show that for all deficiency $t$ subsources $\hat{X} \subset X'$, $\hat{Y} \subset Y'$, it holds that $\bext(\hat{X}_{\vvbot(T_X)},\hat{Y})$ is close to uniform.

Since $\hat{X}$ is a deficiency $O(\ibot t + \ell \log^{2}{n}) = O(\ell \log^{2}{n})$ subsource of $X$ and since $X_{\vvbot(T_X)}$ is $\er$-close to an $\Omega(k^{1/8})$-block-source, $\hat{X}_{\vvbot(T_X)}$ is also $\er$-close to an $\Omega(k^{1/8})$-block-source. Further, since $\hat{Y}$ is a deficiency $O(\ell \log^{2}{n})$ subsource of $Y$, and since $\me(Y) \ge k$, $\me(\hat{Y}) = \Omega(k)$. Since $k^{1/8} = \omega(\log^{12}{n})$, Theorem~\ref{thm:li extractor} implies that $\bext(\hat{X}_{\vvbot(T_X)}, \hat{Y})$ is $\er$-close to a uniform string on $\ell$ bits. Thus, by Theorem~\ref{thm:cr}, Equation~\eqref{eq:i am tierd a bit} holds with probability at least $1 - 2^{-\Omega(\ell)}$.

We now proceed to the induction step. Let $0 \le i < \ibot(T_X)-1$. Let $X' \subset \xfi$, $Y' \subset \yfi$ be deficiency $it$ subsources. We want to show that
$$
\pr_{(x,y) \sim (X',Y')}
\left[
  \forall j \in \{i, \ldots, \ibot(T_X)-1\} \quad v_j(T_X) \text{ favors its good son}\,
\right] \ge 1 - \eps_i.
$$
By the induction hypothesis, for any deficiency $(i+1)t$ subsources $X'' \subset \xfi$, $Y'' \subset \yfi$, it holds that
$$
\pr_{(x,y) \sim (X'',Y'')}
\left[
  \forall j \in \{i+1, \ldots, \ibot(T_X)-1\} \quad v_j(T_X) \text{ favors its good son}\,
\right] \ge 1 - \eps_{i+1}.
$$
As was done in the basis of the induction, we consider two cases. If $\lbl(\leftson(v_i(T_X))) = \f$ then by Claim~\ref{claim:fixed second step},
$$
\pr_{(x,y) \sim (\xfi, \yfi)}
\left[
  v_i(T_X)\,\, (x,y)\text{-favors its good son}
\right] = 1.
$$
Since $X' \subset \xfi$ and $Y' \subset \yfi$, the same holds for $(x,y) \sim (X',Y')$. Thus, by the induction hypothesis
$$
\pr_{(x,y) \sim (X',Y')}
\left[
  \forall j \in \{i, \ldots, \ibot(T_X)-1\} \quad v_j(T_X) \text{ favors its good son}\,
\right]
\ge 1 - \eps_{i+1}
\ge 1 - \eps_{i}.
$$
Consider now the case $\lbl(\leftson(v_i(T_X))) \ne \f$. By the observation made at the beginning of the proof, in this case the good son of $v_i(T_X)$ is its left son. Thus, $v_i(T_X)$ $(x,y)$-favors its good son if and only if
\begin{equation}\label{eq:back at the office}
\resp\left( x_{v_i(T_X)}, y, \ch\left( x_{\leftson(v_i(T_X))}, y\right) \right) = \hasent.
\end{equation}

By Theorem~\ref{thm:cr}, it is enough to show that for any deficiency $t$ subsources $\hat{X} \subset X'$, $\hat{Y} \subset Y'$, it holds that $\ch(\hat{X}_{\leftson(v_i(T_X))},\hat{Y})$ is $\er$-close to having min-entropy $\ell$. Since $\hat{X}$ is a deficiency $t$ subsource of $X'$, and since $X'$ is a deficiency $it$ subsource of $\xfi$, we have that $\hat{X}$ is a deficiency $(i+1)t$ subsource of $\xfi$. Similarly, $\hat{Y}$ is a deficiency $(i+1)t$ subsource of $\yfi$. Thus, by the induction hypothesis,
$$
\pr_{(x,y) \sim (\hat{X},\hat{Y})}
\left[
  \forall j \in \{i+1, \ldots, \ibot(T_X)-1\} \quad v_j(T_X) \text{ favors its good son}\,
\right] \ge 1 - \eps_{i+1}.
$$
By the above equation and by the definition of $\ch$, except with probability $\eps_{i+1}$ over $(x,y) \sim (\hat{X},\hat{Y})$, it holds that $\bext(x_{\vvbot(T_X)}, y)$ appears as a row in $\ch(x_{v_{i+1}(T_X)},y)$.

Since $\hat{X}$ is a deficiency $O((i+1)t + \ell \log^{2}{n}) = O(\ell \log^{2}{n})$ subsource of $X$ and since $X_{\vvbot(T_X)}$ is $\er$-close to an $\Omega(k^{1/8})$-block-source, $\hat{X}_{\vvbot(T_X)}$ is also $\er$-close to an $\Omega(k^{1/8})$-block-source. Further, since $\hat{Y}$ is a deficiency $O(\ell \log^{2}{n})$ subsource of $Y$ and since $\me(Y) \ge k$, $\me(\hat{Y}) = \Omega(k)$. As we assume $k^{1/8} = \omega(\log^{12}{n})$, Theorem~\ref{thm:li extractor} implies that $\bext(\hat{X}_{\vvbot(T_X)}, \hat{Y})$ is $\er$-close to uniform. Thus, except with probability $\eps_{i+1} + \er$, $\ch(\hat{X}_{\leftson(v_i(T_X))},\hat{Y})$ has min-entropy $\ell$. Thus, by Theorem~\ref{thm:cr}, Equation~\eqref{eq:back at the office} holds except with probability $1-(2^{-\Omega(\ell)} + \eps_{i+1}) \cdot \poly(n) = 1 - \eps_i$.
This concludes the proof of the claim.
\end{proof}

\subsection{Analysis of Step 2}

Informally speaking, in this section we prove that the sub-extractor correctly identifies $\vvmid(T_X)$ in some carefully chosen subsources of $\xfi,\yfi$. More precisely, we would have wanted to prove a statement of the following form:
\paragraph{A wishful claim.}There exist low-deficiency subsources $X' \subset \xfi$, $Y' \subset \yfi$ such that with high probability over $(x,y) \sim (X',Y')$, $\obsvvmid(x,y) = \vvmid(T_X)$.

\medskip
Unfortunately, we will not be able to prove this statement. We will, however, be able to prove the same statement for $X',Y'$ that have \emph{high-deficiency} in $\xfi,\yfi$. Still, $X',Y'$ will have enough entropy and structure so to carry out the rest of the analysis. Furthermore, the error term that we are carrying will not cause any harm even after moving to these high-deficiency subsources.



For $\alpha \in \sup((\xfi)_{\leftson(\vvmid(T_X))})$ and $\beta \in \sup((\yfi)_{\leftson(\uumid(T_Y))})$, we define
\begin{align*}
X_{\alpha} &= \xfi \mid ((\xfi)_{\leftson(\vvmid(T_X))} = \alpha), \\
Y_{\beta} &= \yfi \mid ((\yfi)_{\leftson(\uumid(T_Y))} = \beta).
\end{align*}
Let $B$ be the set of all $(x,y) \in \sup((\xfi,\yfi))$ such that
\begin{align}\label{eq:not in b}
&\exists i \in \{0,\ldots,\ibot(T_X)\} \quad v_i(x,y) \neq v_i(T_X) \quad \vee\notag \\
&\exists i \in \{0,\ldots,\ibot(T_Y)\} \quad u_i(x,y) \neq u_i(T_Y).
\end{align}
By Claim~\ref{claim:find paths},
$$
\pr[(\xfi,\yfi) \in B] \le 2^{-\Omega(\ell)}.
$$
Thus, by averaging, there exist $\alpha,\beta$ such that
$$
\pr[(X_{\alpha},Y_{\beta}) \in B] \le
2^{-\Omega(\ell)}.
$$
These are the subsources $X_\alpha \subset \xfi$, $Y_\beta \subset \yfi$ that we will work with. We think of $(x,y) \in B$ as an ``error'' and ignore this event for now. We later accumulate the error coming from this event while making sure to treat the error correctly when moving into subsources of $(X_{\alpha}, Y_{\beta})$. More precisely, recall that by moving to deficiency $d$ subsource, an error of $\eps$ in the source can ``grow'' to at most $2^{d} \cdot \eps$ restricted to the subsource. Since the error term is $2^{-\Omega(\ell)}$ and since we will move to deficiency $o(\ell)$-subsources, the error will remain $2^{-\Omega(\ell)}$ in the subsources that we will restrict to. Thus, we assume that Equation~\eqref{eq:not in b} holds. In particular, we assume that $v_{\itop(T_X)}(X_\alpha,Y_\beta) = \vvtop(T_X)$, $v_{\imid(T_X)}(X_\alpha,Y_\beta) = \vvmid(T_X)$, etc.


\medskip

Recall that $\obsvvmid(x,y)$ is defined to be the node $v$ in $\pobs(x,y)$, with the largest depth such that
\begin{equation}\label{eq:test for vvmid}
\resp\left( x, y, \newch\left( x_v, y_{\qobs(x,y)} \right) \right) = \hasent.
\end{equation}
Thus, to show that $\vvmid(T_X)$ is correctly identified on low-deficiency subsources of $(X_{\alpha}, Y_{\beta})$, we first show that there exist low-deficiency subsources $X_{\alpha,\beta} \subset X_{\alpha}$, $Y_{\alpha,\beta} \subset Y_{\beta}$ such that with high probability over $(x,y) \sim (X_{\alpha,\beta}, Y_{\alpha,\beta})$, Equation~\eqref{eq:test for vvmid} does not hold with $v = v_i(x,y)$ for all $i > \imid(T_X)$. This is the content of the following claim. Afterwards, in Claim~\ref{claim:at imid}, we show that with high probability over $(x,y) \sim (X_{\alpha,\beta}, Y_{\alpha,\beta})$, Equation~\eqref{eq:test for vvmid} holds with $v = v_{\imid(T_X)}(x,y) = \vvmid(T_X)$. 

\begin{claim}\label{claim:below imid}
There exist deficiency $O(\ell' \log^{2}{n})$-subsources $X_{\alpha,\beta} \subset X_{\alpha}$, $Y_{\alpha,\beta} \subset Y_{\beta}$ such that with probability $1-2^{-\Omega(\ell)}$ over $(x,y) \sim (X_{\alpha,\beta}, Y_{\alpha,\beta})$, it holds that
\begin{equation}\label{eq:I want to end this already}
\forall i > \imid(T_X) \quad \resp\left( x, y, \newch\left( x_{v_i(x,y)}, y_{\qobs(x,y)} \right) \right) = \fixed.
\end{equation}
%
\end{claim}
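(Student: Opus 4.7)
The plan follows Section~\ref{sec:overview finding vmid} and proceeds in four sub-steps. First (Step~A), I would construct a deficiency $\log n$ subsource $X'_\alpha \subset X_\alpha$ such that $\qobs(X'_\alpha, Y_\beta)$ is fixed to a single root-to-leaf path $q^\star$ in $T_Y$. By Claim~\ref{claim:find paths} together with our choice of $\alpha,\beta$, the event $B$ has probability at most $2^{-\Omega(\ell)}$ under $(X_\alpha, Y_\beta)$, so off of $B$ the prefix $u_0, \ldots, u_{\ibot(T_Y)}$ of $\qobs$ already coincides with the entropy-path of $T_Y$ and is fixed. For the remaining nodes, each of which is a descendant of $\uubot(T_Y)$ and hence of $\leftson(\uumid(T_Y))$, the favor-decision of the current path-node $u$ is a Boolean test $\resp((Y_\beta)_u, x, \ch((Y_\beta)_{\leftson(u)}, x)) \stackrel{?}{=} \fixed$ in which $(Y_\beta)_u$ and $(Y_\beta)_{\leftson(u)}$ are both substrings of $\beta$ and thus constants; the test is therefore a deterministic function of $x$. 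Applying Fact~\ref{fact:def and fixing} at each level (at most $\log n$ iterations, each losing deficiency one) pins down the entire path.

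Step~B is to argue that, for every $i > \imid(T_X)$, the random variable $\newch((X'_\alpha)_{v_i(X'_\alpha, Y_\beta)}, (Y_\beta)_{\qobs(X'_\alpha, Y_\beta)})$ depends solely on $Y_\beta$. Its second argument equals $(Y_\beta)_{q^\star}$ by Step~A. For the first, observe that $v_i(X'_\alpha, Y_\beta)$ lies in the subtree rooted at $\leftson(\vvmid(T_X))$: if $\imid(T_X) < i \le \ibot(T_X)$ this is because $\pobs$ contains the entropy-path of $T_X$ off of $B$, and if $i > \ibot(T_X)$ then $v_i$ is a descendant of $\vvbot(T_X)$, itself a descendant of $\leftson(\vvmid(T_X))$. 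Since $(X_\alpha)_{\leftson(\vvmid(T_X))}$ is fixed to $\alpha$, every $X$-coordinate entering a favor-decision at a node in this subtree is a constant; hence $v_i(X'_\alpha, Y_\beta)$ is a deterministic function of $Y_\beta$, and so is $(X'_\alpha)_{v_i(X'_\alpha, Y_\beta)}$.

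For Steps~C and~D, each $\newch$ matrix carries $\ell' \log n$ bits, so applying Fact~\ref{fact:def and fixing} successively over the at most $\log n$ values $i > \imid(T_X)$ yields a deficiency $\ell' \log^2 n$ subsource $Y' \subset Y_\beta$ on which every $\newch((X'_\alpha)_{v_i(X'_\alpha, Y')}, (Y')_{\qobs(X'_\alpha, Y')})$ is constant. Then, for each such $i$ I would invoke the first item of Theorem~\ref{thm:cr} to obtain deficiency-$2\ell'$ further subsources on which $\resp(x, y, \newch(\cdots)) = \fixed$ with probability one; iterating across $i$ costs $O(\ell' \log n)$ additional deficiency, and because the event $\{\resp = \fixed\}$ depends only on the sample (not on the distribution), the guarantees from previous iterations are preserved when passing to smaller subsources. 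The final $X_{\alpha,\beta} \subset X_\alpha$ and $Y_{\alpha,\beta} \subset Y_\beta$ thus have deficiency $O(\ell' \log^2 n)$. The only source of failure is the event $B$, whose probability under the final subsources is at most $2^{O(\ell' \log^2 n)} \cdot 2^{-\Omega(\ell)} = 2^{-\Omega(\ell)}$ (since $\ell' \log^2 n = \ell/\log n = o(\ell)$), matching the required bound.

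The main obstacle is the delicate interleaving of the two sources: one cannot hope to turn the node-path challenges into functions of $Y$ alone until $\qobs$ has been pinned via an $X$-side subsource, and only then can Fact~\ref{fact:def and fixing} and the fixed-challenge case of Theorem~\ref{thm:cr} be applied on the $Y$-side. One must also verify that all relevant subsources retain min-entropy $\omega(\log^{10} n)$ so that Theorem~\ref{thm:cr} is applicable; since the cumulative deficiency losses are $o(k)$, this bookkeeping is routine but must be tracked carefully.
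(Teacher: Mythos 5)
Your proof is essentially the same as the paper's: Step~A is Claim~\ref{claim:qobs is fixed}, Step~B is Claim~\ref{claim:the heart of it all}, and Steps~C--D reproduce how the paper concludes by fixing the node-path challenges via Fact~\ref{fact:def and fixing} and then invoking the fixed-challenge side of Theorem~\ref{thm:cr} for each $i > \imid(T_X)$, noting that the resulting probability-one guarantee is preserved under further restriction. The decomposition into the two intermediate claims and the overall accounting (including the treatment of the event $B$ as a $2^{-\Omega(\ell)}$ error that survives passage to $o(\ell)$-deficiency subsources) match the paper.

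One small arithmetic slip: each node-path challenge $\newch(\cdot,\cdot)$ is a $\log(n)\times\ell'$ Boolean matrix, i.e.\ $\ell'\log n$ bits, so a single application of the fixed-challenge side of Theorem~\ref{thm:cr} costs deficiency $2\ell'\log n$, not $2\ell'$; iterating over the at most $\log n$ indices $i$ therefore contributes $O(\ell'\log^2 n)$ rather than $O(\ell'\log n)$. This does not affect your stated final deficiency bound of $O(\ell'\log^2 n)$ (which the paper also gets), nor the error estimate, since $\ell'\log^2 n = \ell/\log n = o(\ell)$ either way.
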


Towards proving Claim~\ref{claim:below imid}, we start by proving the following two claims.

\begin{claim}\label{claim:qobs is fixed}
There exists a deficiency $\log{n}$-subsource $X'_{\alpha} \subset X_{\alpha}$ such that $\qobs(X'_{\alpha},Y_\beta)$ is fixed to a constant.
\end{claim}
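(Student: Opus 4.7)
The plan is a straightforward induction that fixes the nodes of the observed path $\qobs$ one at a time, each fix costing one unit of deficiency in $X_\alpha$. Since the path has length $\log n$, the total deficiency will automatically be bounded by $\log n$, as required.

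First, I would note that under our standing assumption that $(x,y)\notin B$, the prefix $u_0(X_\alpha,Y_\beta),\ldots,u_{\ibot(T_Y)}(X_\alpha,Y_\beta)$ of $\qobs(X_\alpha,Y_\beta)$ already coincides deterministically with the entropy-path $v_0(T_Y),\ldots,\vvbot(T_Y)$ of $T_Y$, so no deficiency needs to be spent on these nodes. The real work lies in fixing the remaining suffix $u_{\ibot(T_Y)+1},\ldots,u_{\log n-1}$ by continuing the recursion down the tree.

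The key structural observation that drives the induction is that every node $u_i$ with $i>\ibot(T_Y)$ is a descendant of $\uubot(T_Y)$, which in turn is a descendant of $\leftson(\uumid(T_Y))$. Hence, once we condition on $Y_{\leftson(\uumid(T_Y))}=\beta$ to form $Y_\beta$, the blocks $(Y_\beta)_{u_{i-1}}$, $(Y_\beta)_{\leftson(u_{i-1})}$, and every $y$-block that appears in the recursive computation of $\ch\bigl((Y_\beta)_{\leftson(u_{i-1})}, X_\alpha\bigr)$ are simultaneously fixed to explicit constants. Consequently, the favoring decision
\[
\crm\bigl((Y_\beta)_{u_{i-1}}, X_\alpha, \ch((Y_\beta)_{\leftson(u_{i-1})}, X_\alpha)\bigr)
\]
becomes a deterministic function of $X_\alpha$ alone.

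With this in hand, I would run the induction as follows. Assume that we have already produced a subsource $X^{(i-1)}\subset X_\alpha$ of deficiency $i-1-\ibot(T_Y)$ on which $u_{\ibot(T_Y)+1},\ldots,u_{i-1}$ are fixed to explicit constants. The preceding observation shows that $u_i(X^{(i-1)},Y_\beta)$ is a deterministic function of $X^{(i-1)}$, so a single invocation of Fact~\ref{fact:def and fixing} (with one-bit output) produces a deficiency-$1$ subsource $X^{(i)}\subset X^{(i-1)}$ on which $u_i$ is fixed. Iterating from $i=\ibot(T_Y)+1$ up to $i=\log n-1$ and taking $X'_\alpha=X^{(\log n-1)}$ yields a deficiency $\log n-1-\ibot(T_Y)\le \log n$ subsource on which the entire path $\qobs(X'_\alpha,Y_\beta)$ is fixed. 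There is no serious obstacle here beyond the structural bookkeeping; the only thing that must be checked carefully is that conditioning on $\beta$ really does fix every $y$-block involved in the tail of the recursive $\ch$ computation, which follows directly from the fact that $\uubot(T_Y)$ sits beneath $\leftson(\uumid(T_Y))$ in $T_Y$.
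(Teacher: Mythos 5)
Your argument is correct and follows the paper's proof essentially step for step: use the non-$B$ event to fix the prefix up to $\uubot(T_Y)$, observe that conditioning on $Y_{\leftson(\uumid(T_Y))}=\beta$ fixes every $y$-block involved in the favoring decisions below $\uubot(T_Y)$ so that each such decision is a deterministic bit of $X_\alpha$, and then peel off one deficiency-$1$ fixing per level via Fact~\ref{fact:def and fixing}. The only cosmetic difference is that you phrase it as an explicit induction with $X^{(i)}$ while the paper writes it as an iteration, and the paper additionally records that the accumulated error remains $2^{-\Omega(\ell)}$ since $\ell=\omega(\log n)$.
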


\begin{proof}
Recall that $\qobs(X_{\alpha},Y_\beta)$ is the path
$$
u_0(X_{\alpha},Y_\beta), \ldots,
u_{\ibot(T_Y)}(X_{\alpha},Y_\beta),
u_{\ibot(T_Y)+1}(X_{\alpha},Y_\beta), \ldots,
u_{\log(n)-1}(X_{\alpha},Y_\beta).
$$
By Equation~\eqref{eq:not in b}, for $i \le \ibot(T_Y)$ it holds that $u_i(X_\alpha, Y_\beta) = u_i(T_Y)$, and so for such $i$, $u_i(X_\alpha, Y_\beta)$ is fixed to a constant. We now consider the case $i > \ibot(T_Y)$. Consider first the case $i = \ibot(T_Y)+1$. In this case, $u_i(X_\alpha,Y_\beta)$ determines which of the two sons of $u_{\ibot(T_Y)}(x,y) = \uubot(T_Y)$ is on the observed entropy-path $\qobs(X_\alpha,Y_\beta)$. Recall that this decision is based on whether or not
\begin{equation}\label{eq:ch vs resp}
\resp\left( (Y_\beta)_{\uubot(T_Y)}, X_\alpha, \ch\left( (Y_\beta)_{\leftson(\uubot(T_Y))}, X_\alpha \right) \right) = \hasent.
%
\end{equation}
Since $\uubot(T_Y)$ and $\leftson(\uubot(T_Y))$ are descendants of $\leftson(\uumid(T_Y))$, as follows by the definition of an entropy-tree, and since $(Y_\beta)_{\leftson(\uumid(T_Y))}$ is fixed to $\beta$, it also holds that $(Y_\beta)_{\uubot(T_Y)}$ and $(Y_\beta)_{\leftson(\uubot(T_Y))}$ are fixed to constants. Thus, Equation~\eqref{eq:ch vs resp} is determined only by $X_\alpha$. Since Equation~\eqref{eq:ch vs resp} gives one bit of information on $X_\alpha$, by Fact~\ref{fact:def and fixing}, there exists a deficiency $1$ subsource $X' \subset X_\alpha$ such that $u_i(X',Y_\beta)$ is fixed to a constant.

We now repeat this argument for $i = \ibot(T_Y)+2, \ldots, \log(n)-1$. In each iteration we make sure that the next descendant of $\uubot(T_Y)$ is fixed to a constant on a low-deficiency subsource of $X_\alpha$ with $Y_\beta$. Since we repeat this process for at most $\log{n}$ iterations, we will eventually obtain a deficiency $\log{n}$-subsource $X'_\alpha \subset X_\alpha$ such that $\qobs(X'_\alpha,Y_\beta)$ is fixed to a constant, as desired. Accounting back for the error, note that since $\ell = \omega(\log{n})$, it holds that $\pr[(X'_\alpha, Y_\beta) \in B] \le 2^{-\Omega(\ell)}$.
\end{proof}

Claim~\ref{claim:qobs is fixed} allows us to set the ground for the challenge-response mechanism:

\begin{claim}\label{claim:the heart of it all}
For any $i > \imid(T_X)$,
$$
\newch\left((X'_{\alpha})_{v_i(X'_{\alpha},Y_\beta)}, (Y_\beta)_{\qobs(X'_{\alpha},Y_\beta)}\right)
$$
is a deterministic function of $Y_\beta$.
\end{claim}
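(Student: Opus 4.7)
The plan is to show that each of the two substrings fed into $\newch$, namely $(X'_\alpha)_{v_i(X'_\alpha, Y_\beta)}$ and $(Y_\beta)_{\qobs(X'_\alpha, Y_\beta)}$, is separately a deterministic function of $Y_\beta$; the claim will then follow since $\newch$ is itself a deterministic function of its two inputs. For the second substring this is immediate from Claim~\ref{claim:qobs is fixed}: that claim gives a fixed constant path $q^{\star}$ with $\qobs(X'_\alpha, Y_\beta) = q^{\star}$, so $(Y_\beta)_{\qobs(X'_\alpha, Y_\beta)} = (Y_\beta)_{q^{\star}}$ depends only on $Y_\beta$.

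For the first substring I would prove by induction on $i \ge \imid(T_X)+1$ the joint statement (a) $v_i(X'_\alpha, Y_\beta)$ is a descendant of $\leftson(\vvmid(T_X))$, and (b) $v_i(X'_\alpha, Y_\beta)$ is a deterministic function of $Y_\beta$. The range $\imid(T_X) < i \le \ibot(T_X)$ serves as the base, using the standing convention (adopted right before this claim) that the bad event $B$ of Equation~\eqref{eq:not in b} does not occur: on that range $v_i(X'_\alpha, Y_\beta) = v_i(T_X)$ is a constant lying on the entropy-path of $T_X$ between $\vvmid(T_X)$ and $\vvbot(T_X)$, and by the entropy-tree definition this entire segment is contained in the subtree of $\leftson(\vvmid(T_X))$. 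For the inductive step, past $i = \ibot(T_X)$, assume $v_{i-1}(X'_\alpha, Y_\beta)$ is a $Y_\beta$-measurable descendant of $\leftson(\vvmid(T_X))$. For every value $w$ that this node can take, $(X'_\alpha)_{w'}$ is a fixed subpart of $\alpha$ for every descendant $w'$ of $w$, since the entire subtree of $\leftson(\vvmid(T_X))$ in $X'_\alpha$ is pinned down by $\alpha$. Unwinding the recursive definition of $\ch$ then gives that $\ch((X'_\alpha)_{\leftson(v_{i-1})}, Y_\beta)$ is a deterministic function of $Y_\beta$; therefore so is the output of the response call $\crm((X'_\alpha)_{v_{i-1}}, Y_\beta, \ch((X'_\alpha)_{\leftson(v_{i-1})}, Y_\beta))$ that decides whether $v_i = \leftson(v_{i-1})$ or $v_i = \rightson(v_{i-1})$. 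Composing with the induction hypothesis on $v_{i-1}$ yields (b), and (a) is immediate because $v_i$ is a child of $v_{i-1}$.

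Once (a) and (b) hold for every $i > \imid(T_X)$, the substring $(X'_\alpha)_{v_i(X'_\alpha, Y_\beta)}$ is a fixed subpart of $\alpha$ once the node $v_i$ is specified, which combined with (b) makes it a deterministic function of $Y_\beta$, finishing the argument. The main place where care is needed -- and the one real obstacle -- is the base range of the induction: the equality $v_i(X'_\alpha, Y_\beta) = v_i(T_X)$ genuinely holds only off the $2^{-\Omega(\ell)}$-measure bad event $B$, not literally on the entire support of $(X'_\alpha, Y_\beta)$. This is precisely the role of the ``ignore the bad event'' convention from the paragraph preceding the claim, and the resulting error is absorbed into the $2^{-\Omega(\ell)}$ error budget that Step~2 of the analysis is already carrying.
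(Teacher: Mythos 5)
Your proof is correct and follows essentially the same route as the paper's: reduce to fixing $\qobs$ via Claim~\ref{claim:qobs is fixed}, then argue that each $v_i$ with $i > \imid(T_X)$ is a $Y_\beta$-measurable descendant of $\leftson(\vvmid(T_X))$, whence $(X'_\alpha)_{v_i}$ is a determined substring of $\alpha$. The only cosmetic difference is where you put the boundary between ``constant by the no-bad-event assumption'' and ``deterministic in $Y_\beta$ by unwinding the favoring rule'': you use constancy up through $i=\ibot(T_X)$ and induct beyond it, while the paper verifies constancy only at $i=\imid(T_X)+1$ and gestures at the remaining steps with ``a similar argument.'' Your version actually makes the induction (which the paper leaves implicit) explicit, and your closing remark about the role of the bad event $B$ correctly identifies the one place where the argument is conditional rather than pointwise.
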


\begin{proof}
By Claim~\ref{claim:qobs is fixed} we have that $\qobs(X'_{\alpha},Y_\beta)$ is fixed to a constant. Thus, it suffices to show that $(X'_{\alpha})_{v_i(X'_{\alpha},Y_\beta)}$ is a deterministic function of $Y_\beta$.

We start by considering the case $i = \imid(T_X)+1$. In this case, $v_i(X'_{\alpha},Y_\beta)$ is fixed to a constant. Indeed, since $i = \imid(T_X)+1 \le \ibot(T_X)$, it holds by Equation~\eqref{eq:not in b} that $v_i(X_{\alpha},Y_{\beta}) = v_i(T_X)$ and so, since $X'_\alpha$ is a subsource of $X_\alpha$, $v_i(X'_{\alpha},Y_{\beta}) = v_i(T_X)$.

The case $i > \imid(T_X)+1$ follows by a different logic, similar to that used in the proof of Claim~\ref{claim:qobs is fixed}. Lets first consider the case $i = \imid(T_X)+2$. Recall that $(X_\alpha)_{\leftson(\vvmid(T_X))}$ is fixed to a constant. Thus, also $(X'_{\alpha})_{\leftson(\vvmid(T_X))}$ is fixed to a constant. Now, $v_i(X'_{\alpha},Y_\beta)$ is defined to be one of the two sons of $\leftson(\vvmid(T_X))$ according to the Boolean value of the expression
$$
\resp
  \left(
    (X'_{\alpha})_{\leftson(\vvmid(T_X))}, Y_\beta, \ch \left( (X'_\alpha)_{\leftson(\leftson(\vvmid(T_X)))}, Y_\beta \right)
  \right) = \hasent.
$$
Since $(X'_{\alpha})_{\leftson(\vvmid(T_X))}$ is fixed to a constant, the above equation is determined only by $Y_\beta$. This shows that $v_i(X'_{\alpha}, Y_\beta)$ is a deterministic function of $Y_\beta$ for $i = \imid(T_X)+2$. A similar argument can be used to show that the same holds for any $i > \imid(T_X)+1$.

To conclude the proof of the claim, we need to show that $(X'_{\alpha})_{v_i(X'_{\alpha}, Y_\beta)}$ is a deterministic function of $Y_\beta$ for $i > \imid(T_X)$. Recall that for any such $i$, $v_i(X'_{\alpha}, Y_\beta)$ is a descendant of $\leftson(\vvmid(T_X))$ determined only by $Y_\beta$. The claim then follows as $(X'_{\alpha})_{\leftson(\vvmid(T_X))}$ is fixed to a constant.
\end{proof}

We are now ready to prove Claim~\ref{claim:below imid}.

\begin{proof}[Proof of Claim~\ref{claim:below imid}]
By Claim~\ref{claim:the heart of it all},
$$
\newch((X'_{\alpha})_{v_i(X'_{\alpha},Y_\beta)}, (Y_\beta)_{\qobs(X'_{\alpha},Y_\beta)})
$$
is a deterministic function of $Y_\beta$ for all $i > \imid(T_X)$. Thus, there exists a deficiency $O(\ell' \log^2{n}) = o(\ell)$ subsource $Y'_\beta \subset Y_\beta$ such that for all $i > \imid(T_X)$,
$$
\newch((X'_{\alpha})_{v_i(X'_{\alpha},Y'_\beta)}, (Y'_\beta)_{\qobs(X'_{\alpha},Y'_\beta)})
$$
is fixed to a constant. By Theorem~\ref{thm:cr}, and accounting for the error, there exist deficiency $2\ell' \log^2{n} = o(\ell)$ subsources $X_{\alpha,\beta} \subset X'_{\alpha}$, $Y'_{\alpha,\beta} \subset Y'_\beta$, such that with probability $1-2^{-\Omega(\ell)}$ over $(x,y) \sim (X_{\alpha,\beta}, Y_{\alpha,\beta})$ Equation~\eqref{eq:I want to end this already} holds.

We note that this application of Theorem~\ref{thm:cr} is valid as both $X_{\alpha,\beta}$, $Y_{\alpha,\beta}$ are $\er$-close to having min-entropy $\Omega(\log^{10}{n})$. Indeed, $X_{\alpha,\beta}$ is a deficiency $O(\ell' \log^{2}{n})$-subsource of $X_\alpha = \xfi \mid ((\xfi)_{\leftson(\vvmid(T_X))} = \alpha)$. Since $\xfi$ is a deficiency $O(\ell \log^{2}{n})$-subsource of $X$ and since $X_{\vvmid(T_X)}$ is $\er$-close to an $\Omega(k^{1/4})$-block-source, it holds that $(\xfi)_{\vvmid(T_X)}$ is also $\er$-close to an $\Omega(k^{1/4})$-block-source. Thus, $X_\alpha$ is $\er$-close to having min-entropy $\Omega(k^{1/4})$. Therefore, $X_{\alpha,\beta}$ is $\er$-close to having min-entropy $\Omega(\log^{10}{n})$. A similar argument can be used for $Y_{\alpha,\beta}$.
\end{proof}

\begin{claim}\label{claim:at imid}
With probability $1- 2^{-\Omega(\ell')}$ over $(x,y) \sim (X_{\alpha, \beta},Y_{\alpha, \beta})$, it holds that
\begin{equation}\label{eq:i love yahli}
\resp\left(
x,y,
\newch
  \left(
    x_{v_{\imid(T_X)}(x,y)},
    y_{\qobs(x,y)}
  \right)
\right) = \hasent.
\end{equation}
\end{claim}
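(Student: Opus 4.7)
The plan is to apply the challenge-response mechanism (Theorem~\ref{thm:cr}) to the sources $X_{\alpha,\beta}, Y_{\alpha,\beta}$ with the challenge function $\ch(x,y) = \newch(x_{v_{\imid(T_X)}(x,y)}, y_{\qobs(x,y)})$; the second item of Theorem~\ref{thm:cr} will then yield the claim, provided we can show that for every pair of sufficiently low-deficiency subsources $\hat X \subset X_{\alpha,\beta}$, $\hat Y \subset Y_{\alpha,\beta}$, the random variable $\ch(\hat X, \hat Y)$ is close to having large min-entropy.

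I will exhibit a single row of the challenge matrix that alone supplies that entropy. By Claim~\ref{claim:find paths}, except on a $2^{-\Omega(\ell)}$ fraction of $(X_{\alpha,\beta}, Y_{\alpha,\beta})$, the observed paths are correctly identified; in particular, the path $\qobs(x,y)$ passes through $\uutop(T_Y)$ at depth $\itop(T_Y)$ and $v_{\imid(T_X)}(x,y) = \vvmid(T_X)$. On that good event, row $\itop(T_Y)$ of $\ch(x,y)$ equals $\bext(y_{\uutop(T_Y)}, x_{\vvmid(T_X)})$. Absorbing the $2^{-\Omega(\ell)}$ error, it thus suffices to show that $\bext(\hat Y_{\uutop(T_Y)}, \hat X_{\vvmid(T_X)})$ is $\er$-close to the uniform distribution on $\ell'$ bits.

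For this I verify the hypotheses of Li's extractor (Theorem~\ref{thm:li extractor}). On the $Y$-side, $Y_{\uutop(T_Y)}$ is $\er$-close to a $k^{1/2}$-block-source. In passing to $Y_\beta$, the only fixing is of $Y_{\leftson(\uumid(T_Y))}$, which sits inside $\lefty(Y_{\uutop(T_Y)})$ because $\uumid(T_Y)$ is a descendant of $\leftson(\uutop(T_Y))$; since $Y_{\uumid(T_Y)}$ is $\er$-close to a $k^{1/4}$-block-source, this fixing still leaves $\Omega(k^{1/4})$ min-entropy in $\lefty(Y_{\uutop(T_Y)})$, while the conditional entropy of the right half given the left is untouched. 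Applying Lemma~\ref{lem:subsource of block source} to absorb the additional $o(k^{1/10})$ deficiency in $\hat Y$, we conclude that $\hat Y_{\uutop(T_Y)}$ is $\er$-close to an $\Omega(k^{1/4})$-block-source. On the $X$-side, since $X_{\vvmid(T_X)}$ is $\er$-close to a $k^{1/4}$-block-source, fixing $\lefty(X_{\vvmid(T_X)}) = X_{\leftson(\vvmid(T_X))}$ to $\alpha$ leaves $\Omega(k^{1/4})$ min-entropy in the right half of $(X_\alpha)_{\vvmid(T_X)}$, and Fact~\ref{fact:def and entropy} then gives $\me(\hat X_{\vvmid(T_X)}) = \Omega(k^{1/4})$ after the further low-deficiency step to $\hat X$.

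Since $k^{1/4} = \omega(\log^{12}{n})$, Theorem~\ref{thm:li extractor} applies and the displayed row is $\er$-close to uniform on $\ell'$ bits; hence $\ch(\hat X, \hat Y)$ is $(2^{-\Omega(\ell)} + \er)$-close to a distribution of min-entropy $\ell'$. Theorem~\ref{thm:cr} then bounds the probability that $\resp$ outputs $\fixed$ on the node-path challenge at $\vvmid(T_X)$ by $2^{-\Omega(\ell')}$, establishing the claim. The most delicate point will be the bookkeeping: one must verify that the cumulative deficiency through $\xfi \to X_\alpha \to X_{\alpha,\beta} \to \hat X$ (and symmetrically on $Y$) stays in $o(k^{1/10})$, so that the entropies of $Y_{\uutop(T_Y)}$ and $X_{\vvmid(T_X)}$ remain comfortably above $\log^{12}{n}$ for Li's extractor, and that the $2^{-\Omega(\ell)}$ error from Claim~\ref{claim:find paths} does not blow up when converted from $(X_{\alpha,\beta},Y_{\alpha,\beta})$ to $(\hat X, \hat Y)$.
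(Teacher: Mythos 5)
Your proposal is correct and follows essentially the same route as the paper's proof: isolate row $\itop(T_Y)$ of the node-path challenge as $\bext(y_{\uutop(T_Y)}, x_{\vvmid(T_X)})$, verify that $\hat Y_{\uutop(T_Y)}$ remains close to an $\Omega(k^{1/4})$-block-source after the $\beta$-fixing (using that $\uumid(T_Y)$ lies inside $\lefty$ of $\uutop(T_Y)$ and is itself a block-source) and that $\hat X_{\vvmid(T_X)}$ retains $\Omega(k^{1/4})$ min-entropy after the $\alpha$-fixing, then invoke Li's extractor and the second item of Theorem~\ref{thm:cr}. The only slip is cosmetic: your bookkeeping chain starts at $\xfi$ rather than $X$, so you should also account for the $X \to \xfi$ (and $Y \to \yfi$) deficiency of $O(\ell\log^2 n)$, which is $o(k^{1/10})$ and hence harmless, exactly as the paper does.
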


\begin{proof}
Let $B$ be the event defined in Equation~\eqref{eq:not in b}. As usual, we consider $(x,y) \in B$ as an ``error'' and ignore it for now. In particular, $v_{\imid(T_X)}(x,y) = \vvmid(T_X)$ and the path $\qobs(x,y)$ is assumed to contain $\uutop(T_Y)$. Thus, $\newch\left(x_{v_{\imid(T_X)}(x,y)}, y_{\qobs(x,y)}\right)$ contains $\bext\left(y_{\uutop(T_Y)}, x_{\vvmid(T_X)}\right)$ as a row.

Recall that $(Y)_{\uutop(T_Y)}$ is $2^{-\Omega(k^{1/10})}$-close to an $\Omega(k^{1/2})$-block-source. Since $\yfi$ is a deficiency $O(\ell \log^{2}{n})$-subsource of $Y$, and since $\ell \log^{2}{n} = o(k^{1/10})$, $(\yfi)_{\uutop(T_Y)}$ is also $2^{-\Omega(k^{1/10})}$-close to an $\Omega(k^{1/2})$-block-source. By a similar argument, $(\yfi)_{\uumid(T_Y)}$ is $2^{-\Omega(k^{1/10})}$-close to an $\Omega(k^{1/4})$-block-source. Thus, $(Y_{\beta})_{\uumid(T_Y)}$ is $2^{-\Omega(k^{1/10})}$-close to having min-entropy $\Omega(k^{1/4})$, and so $(Y_{\beta})_{\uutop(T_Y)}$ is $2^{-\Omega(k^{1/10})}$-close to an $\Omega(k^{1/4})$-block-source. Therefore, $(Y_{\alpha,\beta})_{\uutop(T_Y)}$ $2^{-\Omega(k^{1/10})}$-close to an $\Omega(k^{1/4})$-block-source.

Recall that $X_{\vvmid(T_X)}$ is $2^{-\Omega(k^{1/10})}$-close to an $\Omega(k^{1/4})$-block-source. Since $\xfi$ is a deficiency $O(\ell \log^{2}{n})$-subsource of $X$, $(\xfi)_{\vvmid(T_X)}$ is $2^{-\Omega(k^{1/10})}$-close to an $\Omega(k^{1/4})$-block-source. Thus, $(X_{\alpha})_{\vvmid(T_X)}$ is $2^{-\Omega(k^{1/10})}$-close to having min-entropy $\Omega(k^{1/4})$. Therefore, $(X_{\alpha,\beta})_{\vvmid(T_X)}$ is $2^{-\Omega(k^{1/10})}$-close to having min-entropy $\Omega(k^{1/4})$.

Let $\hat{X} \subset X_{\alpha,\beta}$, $\hat{Y} \subset Y_{\alpha,\beta}$ be any deficiency $20 \ell'  \log{n}$ subsources. We have that $\hat{Y}_{\uutop(T_Y)}$ is $2^{-\Omega(k^{1/10})}$-close to an $\Omega(k^{1/4})$-block-source and that $\hat{X}_{\vvmid(T_X)}$ is $2^{-\Omega(k^{1/10})}$-close to having min-entropy $\Omega(k^{1/4})$. Thus, $\bext(\hat{Y}_{\uutop(T_Y)}, \hat{X}_{\vvmid(T_X)})$ is $\er$-close to a uniform string on $\ell'$ bits. In particular, $\newch(\hat{X}_{\vvmid(T_X)}, \hat{Y}_{\qobs(\hat{X},\hat{Y})})$ is $\er$-close to having min-entropy $\ell'$.

Theorem~\ref{thm:cr} then implies that Equation~\eqref{eq:i love yahli} holds with probability at least $1 - (2^{-\ell'}+\er) = 1 - 2^{-\Omega(\ell')}$ over $(x,y) \sim (X_{\alpha,\beta}, Y_{\alpha,\beta})$. This concludes the proof of the claim.

\end{proof}

\subsection{Analysis of Step 3}

Recall that the output of the sub-extractor is defined as
$$
\subext(x,y) = \bext\left(x_{\obsvvmid(x,y)} \circ x, y\right).
$$
By Claim~\ref{claim:below imid} and Claim~\ref{claim:at imid}, we have that except with probability $2^{-\Omega(\ell')}$ over $(x,y) \sim (X_{\alpha,\beta}, Y_{\alpha,\beta})$ it holds that $\obsvvmid(x,y) = \vvmid(T_X)$. Recall that $\vvmid(T_X)$ is a descendant of $\leftson(\vvtop(T_X))$. Further, recall that $(X_{\alpha,\beta})_{\vvmid(T_X)}$ is $\er$-close to having min-entropy $\Omega(k^{1/4})$. Since $X_{\vvtop(T_X)}$ is $2^{-\Omega(k^{1/10})}$-close to a $\Omega(k^{1/2})$-block-source, we have that $(X_{\alpha,\beta})_{\vvtop(T_X)}$ is $2^{-\Omega(k^{1/10})}$-close to an $\Omega(k^{1/4})$-block-source. In particular, this implies that $(X_{\alpha,\beta})_{\vvmid(T_X)} \circ X_{\alpha,\beta}$ is $\er$-close to an $\Omega(k^{1/4})$-block-source.

Recall also that $Y_{\alpha,\beta}$ is $2^{-\Omega(k^{1/10})}$-close to having min-entropy $\Omega(k)$. Thus, by Theorem~\ref{thm:li extractor} we conclude that
$$
\subext(X_{\alpha,\beta},Y_{\alpha,\beta}) =
\bext\left((X_{\alpha,\beta})_{\obsvvmid(X_{\alpha,\beta},Y_{\alpha,\beta})} \circ X_{\alpha,\beta}, Y_{\alpha,\beta}\right)
$$
is $\er$-close to uniform.

\section{Conclusion and Open Problems}\label{sec:open}

\paragraph{The next quantitative natural goal.}In this paper, we gave a construction of a $2^{\poly(\log\log{n})}$-Ramsey graph, or equivalently, a two-source disperser for entropy $\polylog(n)$. Erd{\"o}s set the goal at constructing $O(\log{n})$-Ramsey graphs, which translates to the difficult problem of constructing two-source dispersers for entropy $\log(n)+O(1)$. We set the next goal towards Erd{\"o}s challenge at constructing a $\polylog(n) = 2^{O(\log\log{n})}$-Ramsey graph, which is equivalent to a two-source disperser for entropy $O(\log{n})$.

\paragraph{A weakly-explicit construction.}Our construction of Ramsey graphs is strongly-explicit, namely, one can query each pair of vertices of the $n$-vertices graph to check whether there is an edge connecting them, in time $\polylog(n)$. In the setting of two-source dispersers, a strongly-explicit construction is the natural definition. However, we believe it is interesting to obtain better weakly-explicit Ramsey graphs, where by weakly-explicit we mean that the entire graph can be computed in time $\poly(n)$. Barak~\etal~\cite{BKSSW10} have a simple construction of a $\polylog(n)$-Ramsey graph, however, its running-time is $2^{\polylog{n}}$. Other than that, we are not aware of any result in this direction.

\paragraph{Improved sub-extractors.}The two-source sub-extractor that we construct has inner-entropy $\kout^{\Omega(1)}$ or even $\kout/\polylog(n)$, where $\kout$ is the outer-entropy. We pose the problem of constructing a sub-extractor with inner-entropy $\kin = \Omega(\kout)$ or even $\kin = \kout-o(\kout)$, for $\kout = \polylog(n)$. We believe that this is a natural goal towards constructing two-source extractors for polylogarithmic entropy.

\paragraph{Affine dispersers.}Shaltiel~\cite{S11} adjusted the challenge-response mechanism so to work with a single affine-source, rather than with two weak-sources. This allowed him to construct affine-dispersers for entropy $2^{(\log{n})^{0.9}}$. Can one obtain affine-dispersers for polylogarithmic entropy given recent advances?

\section*{Acknowledgement}\label{sec:ack}

I wish to thank Ran Raz and Avi Wigderson for their warm encouragement. On a personal note, it is uncustomary to acknowledge one's partner in life in mathematical papers. However, given that this paper was intensively written in the last month of my wife's pregnancy and in the first month of parenthood to the newborn baby girl Meshi and to our sweet Yahli, I will allow myself to make an exception -- thank you Orit! Your support and belief in me are uncanny.

\bibliographystyle{alpha}
\bibliography{bibliography}


\end{document}